\newcommand{\slt}[1]{\text{SL}_2(#1)}
\newcommand{\Zi}{\mathbb{Z}[i]}
\newcommand{\mfq}{\mathfrak{q}}
\newcommand{\mfp}{\mathfrak{p}}
\newcommand{\mtr}[1]{\text{tr}(#1)}
\newcommand{\mtrs}[1]{\text{tr}^2(#1)}
\newcommand{\GN}[1]{\text{N}(#1)}
\newcommand{\Det}[1]{\text{det}(#1)}
\newcommand{\ind}[1]{\mathbf{1}_{\{ #1\}}}
\newcommand{\adm}[3]{\tensor*[_#1]{\Gamma}{_#2^#3}}
\newcommand{\ad}[2]{{\Gamma}{_#1^#2}}
\newcommand{\dm}[2]{{}_#1{\Gamma}{^{#2}}}
\newcommand{\round}[1]{\lfloor#1 \rceil}
\newtheorem{proposition}{Proposition}
\newtheorem{theorem}{Theorem}
\newtheorem{lemma}{Lemma}
\newtheorem{corollary}{Corollary}
\begin{document} 
\title[Low-lying Geodesics]{Low-lying Geodesics in an Arithmetic Hyperbolic Three-Manifold} 
\author{Katie McKeon} 
\date{7/1/2019}
\begin{abstract}
We examine closed geodesics in the quotient of hyperbolic three space by the discrete group of isometries SL(2,Z[i]).  There is a correspondence between closed geodesics in the manifold, the complex continued fractions originally studied by Hurwitz, and binary quadratic forms over the Gaussian integers.  According to this correspondence, a geodesic is called fundamental if the associated binary quadratic form is.  Using techniques from sieve theory, symbolic dynamics, and the theory of expander graphs, we show the existence of a compact set in the manifold containing infinitely many fundamental geodesics.
\end{abstract}

\maketitle

\tableofcontents

\section{Introduction}

\subsection{Closed Geodesics on the Modular Surface}\

In \cite{duke}, Duke showed that closed geodesics on the modular surface equidistribute when grouped by discriminant, see also \cite{clozelullmo}.  In \cite{ELMV} Einsiedler, Lindenstrauss, Michel, and Venkatesh gave a modern treatment of Linnik's approach to this problem using the ergodic method.   A key step is ruling out other potential weak-$*$ limits of closed geodesics.  This raises the basic question on what other weak-$*$ limits could arise.  Because the geodesic flow is a shift map (see Chapter~\ref{pollicott}) this question is trivial without more restrictions.  They asked whether there is an infinite collection of closed geodesics having fundamental discriminant and being trapped in a compact subset of the modular surface.  That is they don't visit the cusp, or are ``low-lying.''  Bourgain and Kontorovich \cite{bk2017} showed an abundance of fundamental low-lying geodesics on the modular surface, answering the question above in a quantitative sense. 

We will attack the corresponding problem in the Picard $3$-manifold $\slt{\Zi} \backslash \mathbb{H}^3$.  However, the solution is not as simple as applying the machinery from \cite{bk2017} to a `thin semi-group' with well-established growth properties.  For one, continued fractions in the complex plane are much more complicated to work with versus simple continued fractions on the real line.  In particular, our symbolic encoding of closed geodesics does not display a semigroup structure because the shift map is restricted.  We also have to develop in this setting much of the machinery (Chapters~\ref{pollicott}-\ref{delta}) which was already available to \cite{bk2017} for the modular surface.

\subsection{The Main Theorem}\

We must establish some terminology before stating the main result.  Consider the upper half space model of hyperbolic three-space: 
$$\mathbb{H}^3 = \{z + tj : z \in \mathbb{C}, t \in \mathbb{R}_+ \}$$
equipped with the line element 
$$ds^2 = (dz_1^2+d z_2^2 + dt^2)/t^2$$
where an element $\begin{pmatrix} a & b \\ c & d \end{pmatrix}$ in the group of isometries $G=PSL_2{\mathbb{C}}$ acts by
$$
z+tj \mapsto  \frac{(az+b) \overline{(cz+d)}}{|cz+d|^2 + |c|^2 t^2} +  \left(\frac{t}{|cz+d|^2+|c|^2t^2}\right) j .
$$
When $\mathbb{H}^3$ is embedded as a subset of Hamilton's quaternions, this expression simplifies as $z+tj \mapsto (a(z+tj)+b)(c(z+tj)+d)^{-1}$.  Since $G$ extends to a simply transitive action on the frame bundle $F\mathbb{H}^3$, we can identify an element in $G$ with where it moves some representative reference frame.  We can also identify $\mathbb{H}^3 \leftrightarrow G / SU(2)$ and
$$
T^1 \mathbb{H}^3 \leftrightarrow G  / SO(2).
$$
Geodesic flow on $T^1 \mathbb{H}^3$ under this identification is represented by right-multiplication by the one parameter group generated by $a_t = \begin{pmatrix} e^t & 0 \\ 0 & e^{-t} \end{pmatrix}$.  There is a correspondence between conjugacy classes of primitive hyperbolic matrices in $\Gamma = \slt{\Zi}$ and closed geodesics.

To restrict to ``low-lying'' geodesics,we only consider those in the standard fundamental domain lying in a certain region $\{z + tj : t<R\}$.  See for example, the figure below which depicts the standard fundamental domain for $\Gamma \backslash \mathbb{H}^3$ and the cutoff $t<2.5$.
\begin{figure}[h!]
\includegraphics[scale=.5]{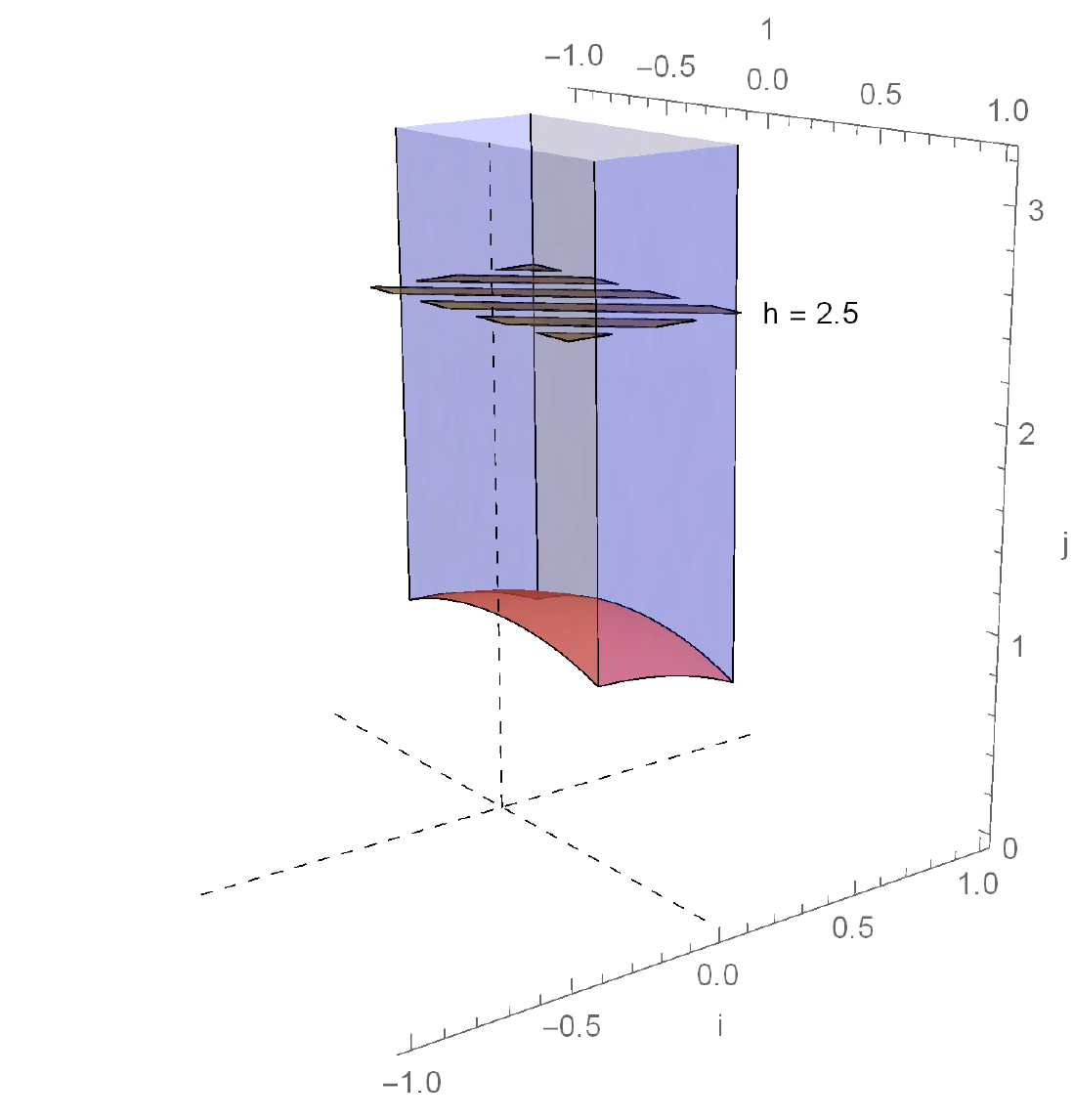}
\centering
\caption{Height Cutoff $t<2.5$}
\end{figure}

Once we have developed a symbolic encoding of closed geodesics, it is trivial to manufacture infinitely many geodesics which are low-lying by enumerating periodic points (with restricted orbits) of a map analogous to the Gauss map for continued fractions.  Adding the condition that geodesics be fundamental places the requirement that the geodesic $g$, as a loxodromic element of $\slt{\Zi}$, has $\mtrs{g}-4$ which satisfies certain conditions (see Section~\ref{fd}), a sufficient one being that it is square-free.  If $C_D$ is the set of distinct geodesics with discriminant $D$ and $N(\cdot)$ denotes the norm of a Gaussian integer, then we will show the following quantitative result:
\begin{theorem}
\label{main}
For any $\epsilon>0$, there is a compact region $Y(\epsilon) \subset \Gamma \backslash \mathbb{H}^3$ and a set $D(\epsilon)$ of fundamental discriminants such that
\begin{align*}
\#\{ D \in \mathcal{D}(\epsilon): \GN{D}<X \} &\gg_\epsilon X^{1-\epsilon}, \hspace{.3in} X \to \infty
\end{align*}
and for all $D \in \mathcal{D}(\epsilon)$, 
\begin{align*}
\#\{ \gamma \in C_D : \gamma \subset Y(\epsilon)\} &> |C_D|^{1-\epsilon}.
\end{align*}
\end{theorem}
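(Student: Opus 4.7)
The plan is to adapt the approach of Bourgain-Kontorovich on the modular surface to the Picard setting, constructing low-lying geodesics as periodic orbits of a restricted complex continued fraction shift and then sieving for fundamental discriminants. First I would fix a finite set $\mathcal{A}$ of admissible Hurwitz digits, chosen so that any periodic word whose entries all lie in $\mathcal{A}$ yields a closed geodesic whose height excursions are controlled, hence contained in some fixed compact $Y(\epsilon) \subset \Gamma \backslash \mathbb{H}^3$. This requires a quantitative correspondence (to be developed in Chapter~\ref{pollicott}) between the symbolic encoding and the geometry of the geodesic near the cusp: the larger the digits, the higher the geodesic rises. Simultaneously $\mathcal{A}$ must be thick enough that the topological entropy $h = h(\mathcal{A})$ of the induced subshift of finite type is close to maximal; this ``thickness'' controls the quantitative strength $1-\epsilon$ of the conclusion.

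Next I would count periodic orbits and their traces. Via transfer-operator / Pollicott-Ruelle techniques the number of primitive periodic words of length $N$ in the restricted subshift is $\asymp e^{hN}/N$. For each such word $w$, the associated hyperbolic element $g(w) \in \Gamma$ has Gaussian trace $t(w) \in \Zi$ with $|t(w)| \asymp e^{hN/2}$, producing a discriminant $D(w) = t(w)^2 - 4$ of norm $\asymp e^{hN}$. Choosing $N \asymp \log X$ gives $\gg X^{1-\epsilon}$ candidate discriminants with $\GN{D(w)} < X$, once multiplicities of distinct words yielding the same trace are controlled by an upper-bound trace sum.

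The crucial step is sieving for fundamentality. By the criterion in Section~\ref{fd}, I want $D(w)$ essentially squarefree. The sieve takes as input equidistribution of the traces $\{t(w)\}$ modulo Gaussian ideals $\mathfrak{q}$, uniform over $\mathfrak{q}$ up to a small power of $X$. This equidistribution is the analog of a spectral gap / expansion for the Cayley graph of $\slt{\Zi/\mathfrak{q}}$, but because the admissibility relation obstructs a full-shift (i.e.\ semigroup) structure, it must instead be established via a spectral gap for \emph{congruence-twisted} transfer operators, the content of Chapter~\ref{delta}. Granting this, a standard combinatorial sieve (Selberg upper bound followed by an inclusion-exclusion on small Gaussian primes) shows that all but $\ll X^{1-2\epsilon}$ of the trace values produce fundamental $D$, yielding the required lower bound $\#\{D \in \mathcal{D}(\epsilon) : \GN{D} < X\} \gg X^{1-\epsilon}$.

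Finally, to establish the per-discriminant bound $\#\{\gamma \in C_D : \gamma \subset Y(\epsilon)\} > |C_D|^{1-\epsilon}$, I would exploit the action of the narrow class group (equivalently the automorphism group of the associated binary quadratic form) on $C_D$, which permutes closed geodesics of a given discriminant while preserving the height profile up to bounded error; translates of a single low-lying representative remain low-lying, giving the full orbit of $|C_D|^{1-\epsilon}$ admissible geodesics. The main obstacle is the congruence-uniform spectral gap for the twisted transfer operator: in the $\slt{\mathbb{Z}}$ setting this reduces to expansion in $\slt{\mathbb{Z}/q\mathbb{Z}}$ via Bourgain-Gamburd-Sarnak, but over $\Zi$ with a proper subshift of finite type the expansion must be proven intrinsically, combining representation theory of $\slt{\Zi/\mathfrak{q}}$ with $L^2$-norm bounds on character-twisted transfer operators. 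This is the single hardest ingredient and is what forces the long development of Chapters~\ref{pollicott}--\ref{delta}.
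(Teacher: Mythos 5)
Your outline reproduces the paper's first two stages (Pollicott-type symbolic encoding with a restricted alphabet, transfer-operator counting of periodic orbits and traces), but the heart of your sieving step is precisely the ``first attempt via the affine sieve'' that the paper shows does \emph{not} suffice. Granting the congruence-uniform spectral gap for the twisted transfer operators (the paper does prove this, Theorem~\ref{affinesieve}), the level of distribution you obtain is $X^{\alpha_R}$ with $\alpha_R \asymp \Theta_R/C_R$ depending on the truncation parameter $R$. Squarefreeness of $\mtrs{g}-4$ cannot be obtained from a sieve of small primes alone: after sieving to almost-primes one must discard $g$ with $p^2 \mid (\mtr{g}\pm 2)$ for $\GN{p}$ up to roughly $X^{1/2}$, and the trivial trace-multiplicity bound gives $\ll X^{4-\alpha_R}$ such exceptions, so one needs $2\delta_R > 4-\alpha_R$. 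Since $\delta_R\to 2$ forces $R\to\infty$ while $\Theta_R$ (hence $\alpha_R$) degrades with $R$ and the two dependences cannot be decoupled, your ``standard combinatorial sieve'' step has no valid choice of parameters. The paper's actual proof requires the additional ``beyond expansion'' machinery: a multilinear sifting set $\Pi = \Xi\,\dot{\Vert}\,\aleph\,\dot{\Vert}\,\Omega$ in which the middle block $\aleph$ lives in the \emph{fixed} system $\Gamma_8$ (so its spectral gap $\Theta$ is absolute) while $\Xi,\Omega$ live in $\Gamma_R$ with $R$ large (so $\delta_R$ is near $2$); small divisors are handled by expansion applied to $\aleph$, and large divisors by Cauchy--Schwarz, Kloosterman-type complete character sums over $\slt{\mfq}$, and a smoothed lattice-point count using the Selberg $3/16$ analogue, yielding the absolute level of distribution $\alpha = 1/16-\epsilon$ independent of $R$. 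Nothing in your proposal supplies this, and without it the inequality $2\delta_R>4-\alpha$ is unobtainable.

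Your final step is also not viable as stated: you claim the class-group action on $C_D$ ``preserves the height profile up to bounded error,'' so that one low-lying representative propagates to $|C_D|^{1-\epsilon}$ low-lying geodesics. That is false in general --- distinct geodesics of the same discriminant can penetrate arbitrarily far into the cusp, and if the action preserved low-lying-ness the whole problem would trivialize. The paper instead obtains the per-discriminant bound by pigeonholing inside $\Gamma_R$: from the lower bound $\#\{\gamma\in\Gamma_R\cap B_N : \mtrs{\pi(\gamma)}-4 \text{ squarefree}\} > N^{2\delta_R-\eta}$ and the trivial upper bound on each trace multiplicity, a positive proportion (in the required sense) of traces $t$ satisfy $\mathcal{M}_R(t) \geq \GN{t}^{2\delta_R-2-2\eta}$; for such $t$ all $\mathcal{M}_R(t)$ geodesics with discriminant $D=t^2-4$ are automatically low-lying because they already lie in $\Gamma_R$, and comparing with $|C_D| \ll \GN{D}^{1/2+\epsilon}$ gives $\#\{\gamma\in C_D : \gamma\subset Y(\epsilon)\} > |C_D|^{1-\epsilon}$. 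You need an argument of this multiplicity-selection type, not an appeal to the class-group symmetry.
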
 
Of course, qualitatively, this solves the problem of producing infinitely many fundamental low-lying geodesics in the Picard $3$-manifold.

\subsection{Strategy of the Proof}\

Our three main tools come from symbolic dynamics, expander graphs, and sieve theory. 

\subsubsection{Thermodynamic Formalism and Renewal Theorems}\

After converting the study of closed geodesics to periodic points of an analogue of the `Gauss map', we are led to study the dynamics of a subshift of finite type.  In particular, define
$$
\Sigma_R := \{ (x_1, x_2, \ldots ) : x_i \in P_R, A_{x_i,A_{i+1}}=1 \text{ for all } i\}
$$
where $P_R$ is some finite alphabet (see Chapter~\ref{pollicott}), and $A$ is a $P_R \times P_R$ binary matrix conveying transition rules (i.e. $A_{x,y}=1$ if $x$ can be followed by $y$ and $A_{x,y}=0$ otherwise). The shift map $\sigma$ on $\Sigma_R$ is defined as $\sigma( (x_1, x_2, \ldots)) = (x_2, x_3 , \ldots)$.  Choosing the appropriate $P_R$ and $A$ following the work of \cite{PolGauss}, gives an essentially one-to-one correspondence between closed geodesics lying in a compact set (corresponding to the choice of $R$) and periodic points of $\Sigma_R$ under $\sigma$.    Denote the set of closed geodesics associated to $(\Sigma_R, \sigma)$ via this correspondence as $\Gamma_R \subset \slt{\Zi}$ and
$$
B_X := \left\{ \begin{pmatrix} a & b \\ c & d \end{pmatrix} \in \slt{\Zi} : ||a||^2+||b||^2+||c||^2 + ||d||^2 < X^2 \right\}.
$$
The methods of Lalley in \cite{Lalley} are straightforward to apply to our situation and lead to the following:
\begin{theorem}
\label{normball}
For fixed $R \geq 3$, there is a $\delta_R \in (0,2)$ so that
$$\# ( \Gamma_R \cap B_X ) \asymp X^{2\delta_R}$$
as $X \to \infty$.
\end{theorem}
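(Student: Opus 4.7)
The plan is to follow the Ruelle transfer operator and renewal-theorem framework of Lalley \cite{Lalley}, adapted from the hyperbolic-plane setting to the three-dimensional symbolic system $(\Sigma_R,\sigma)$. The starting point is to use the correspondence from Chapter~\ref{pollicott} to parametrize each $g \in \Gamma_R$ by an admissible periodic word in $\Sigma_R$: a primitive $g$ of period $n$ factors as a product $\gamma_{x_1}\cdots \gamma_{x_n}$ of matrix generators indexed by the word. For a loxodromic $g \in \slt{\mathbb{C}}$ with expanding eigenvalue $\lambda(g)$ and complex translation length $\ell(g) = 2\log|\lambda(g)|$, the Frobenius norm satisfies $\|g\|^2 \asymp e^{\ell(g)}$, so that $g \in B_X$ is essentially the condition $\ell(g) < 2\log X + O(1)$. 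Under the symbolic coding, $\ell(g)$ equals the Birkhoff sum $S_n\phi(x) = \phi(x) + \phi(\sigma x) + \cdots + \phi(\sigma^{n-1} x)$ of the geometric potential $\phi = \log|T'|$ associated to the expanding map $T$ conjugate to $\sigma$. Thus counting $\Gamma_R \cap B_X$ reduces to counting periodic orbits of $\sigma$ whose Birkhoff sum lies below a threshold.

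The next step is to introduce the Ruelle transfer operator
$$
\mathcal{L}_s f(x) = \sum_{\sigma y = x} e^{-s\phi(y)} f(y)
$$
on a suitable space of H\"older-continuous functions on $\Sigma_R$. Because $P_R$ is finite and $A$ can be arranged to be topologically mixing, Ruelle--Perron--Frobenius theory supplies a simple positive leading eigenvalue $\lambda(s)$ with spectral gap, analytic and strictly decreasing in real $s$. I would take $\delta_R$ to be the unique real solution to $\lambda(s) = 1$, equivalently the zero of the topological pressure $P(-s\phi)$. Positivity $\delta_R > 0$ follows from the uniform boundedness of $\phi$ away from zero (the alphabet is finite), and the bound $\delta_R < 2$ follows from the fact that the orbits encoded by $\Sigma_R$ have limit set a proper compact $T$-invariant subset of $\partial\mathbb{H}^3 = \widehat{\mathbb{C}}$, hence of Hausdorff dimension strictly less than $2$.

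Lalley's renewal theorem applied to $\mathcal{L}_s$ then yields
$$
\#\{x : \sigma^n x = x,\; S_n\phi(x) < T\} \asymp e^{\delta_R T}
$$
as $T \to \infty$; substituting $T = 2\log X + O(1)$ gives the desired asymptotic $\#(\Gamma_R \cap B_X) \asymp X^{2\delta_R}$. One final bookkeeping point is to check that the periodic-point to conjugacy-class correspondence is essentially one-to-one (multiplicities and boundary-coding ambiguities contribute only $O(1)$), which is standard once the coding is fixed.

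The main obstacle will be verifying the hypotheses of the renewal theorem in this concrete three-dimensional setting: topological mixing of $\sigma|_{\Sigma_R}$, which must be extracted from the explicit transition matrix $A$ of Chapter~\ref{pollicott}; H\"older continuity of $\phi$, which follows from piecewise holomorphy of $T$; and most delicately, the non-lattice condition that $\{S_n\phi(x) : \sigma^n x = x\}$ generates a dense additive subgroup of $\mathbb{R}$. This last is equivalent to irrationality of the length ratio for some two independent closed geodesics in $\Gamma_R$, and for a non-elementary subfamily of the arithmetic Kleinian group $\slt{\Zi}$ it is a standard consequence of algebraicity of the length spectrum; the explicit verification in the complex continued fraction setting is where the bulk of the technical work will lie.
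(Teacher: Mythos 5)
Your outline follows essentially the same route as the paper: code the low-lying geodesics by the subshift $(\Sigma_R,\sigma)$, run Ruelle--Perron--Frobenius theory for the transfer operator weighted by the distortion function, define $\delta_R$ as the zero of the pressure, verify irreducibility/aperiodicity and the non-lattice condition, and extract the count from Lalley's renewal theorem. At the level of strategy there is nothing to change, but two of your justifications would not survive as written. First, $\|g\|^2 \asymp e^{\ell(g)}$ is not a property of loxodromic elements: the Frobenius norm depends on the representative, not on the conjugacy class (conjugating by a large parabolic fixes $\ell(g)$ and blows up the norm). What is true, and what the paper actually uses, is the exact identity $2\cosh d(j,g\cdot j)=\|g\|^2$ together with the fact that for the specific coding representatives $g_a$ the displacement $d(j,g_a\cdot j)$ telescopes into the Birkhoff sum of $\tau^*(a)=d(j,g_a\cdot j)-d(j,g_{\sigma a}\cdot j)$ and agrees, via bounded distortion, with the Birkhoff sum of $\tau$ over the associated periodic word. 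Making this precise is more than bookkeeping: the renewal function must be set up on \emph{finite} words, and the extended system $\Sigma_R^*\cup\Sigma_R$ (finite words with a cemetery state appended) is no longer irreducible, so RPF does not apply directly; the paper instead sandwiches the finite renewal function $N^*$ between shifted copies of $N$ and passes to the limit.

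Second, your proposed verification of the non-lattice condition --- irrationality of a length ratio as a ``consequence of algebraicity of the length spectrum'' --- does not work as a proof: algebraicity of traces makes the lengths logarithms of algebraic numbers, and irrationality of ratios of such logarithms is precisely the kind of statement that is not available in general (four-exponentials territory). The paper avoids this entirely by showing that $\tau$ is cohomologous to the return-time (height) function of the suspension realizing the restricted geodesic flow, so a lattice $\tau$ would force the suspension flow to be non-mixing for every invariant measure, contradicting Rudolph's mixing theorem; you would need either this argument or a genuine non-arithmeticity-of-the-length-spectrum input, not algebraicity. A smaller quibble: ``proper compact subset of $\widehat{\mathbb{C}}$, hence Hausdorff dimension $<2$'' is not a valid implication (a closed disk is a counterexample); to get $\delta_R<2$ one should argue via porosity of the restricted limit set or via comparison with the full countable-alphabet system as in Chapter~\ref{delta}, where the critical exponent $2$ arises as the limit of the $\delta_R$.
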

It will be crucial in a later argument that the set above is large.  
\begin{theorem}
\label{deltabig}
The growth parameter given by Theorem~\ref{normball} satisfies
$$
\lim_{R \to \infty} \delta_R \to 2.
$$
\end{theorem}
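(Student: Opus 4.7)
The plan is to view $\delta_R$ through the lens of the thermodynamic formalism already deployed for Theorem~\ref{normball}. By Lalley's theory, $\delta_R$ is the unique $s$ solving the Bowen pressure equation $P_\sigma(-s \log |T_R'|) = 0$ on the subshift $(\Sigma_R, \sigma)$, where $T_R$ is the analogue of the Gauss map associated to the alphabet $P_R$ with admissibility matrix $A$. Equivalently, $\delta_R$ is the Hausdorff dimension of the limit set $\Lambda_R \subset \widehat{\mathbb{C}} \simeq \partial \mathbb{H}^3$ of the conformal iterated function system whose branches are the inverse maps indexed by admissible pairs in $P_R$. The target value $2$ is precisely the Hausdorff dimension of $\partial \mathbb{H}^3$, which coincides with the critical exponent of the full Bianchi lattice $\slt{\Zi}$.

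Two easy observations bracket the argument. First, monotonicity: enlarging the height cutoff only adds letters to $P_R$ and relaxes the admissibility rule, so $\Lambda_R \subset \Lambda_{R'}$ whenever $R \leq R'$, giving $\delta_R \leq \delta_{R'}$. Second, $\delta_R \leq 2$ uniformly, since $\Gamma_R \subset \slt{\Zi}$ and the classical Bianchi lattice-point count $\#(\slt{\Zi} \cap B_X) \asymp X^4$, combined with Theorem~\ref{normball}, forces $2\delta_R \leq 4$. Hence $\delta_\infty := \lim_R \delta_R$ exists in $[0,2]$, and the whole task is the matching lower bound $\delta_\infty \geq 2$.

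For this I would adapt Jarn\'ik's theorem on bounded partial quotients ($\dim_H\{x : a_n(x) \leq M \,\forall n\} \to 1$ as $M \to \infty$) to the Hurwitz complex continued fraction setting. The set of $\xi \in \mathbb{C}$ whose Hurwitz expansion uses only digits in $P_R$ and only admissible transitions embeds into $\Lambda_R$, and the full Hurwitz dynamical system (no height cutoff) has limit set all of $\widehat{\mathbb{C}}$, hence dimension $2$. Concretely, I would run a Bowen-formula / pressure computation: fixing a Euclidean ball in $\mathbb{C}$, produce a family of admissible $n$-cylinders of diameter $\asymp e^{-n\lambda}$ whose cardinality is at least $e^{n(2-\varepsilon)\lambda}$ once $R = R(\varepsilon)$ is large, and conclude via the variational principle that $\delta_R \geq 2 - \varepsilon$.

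The main obstacle is the restricted shift flagged in the introduction: the transition matrix $A$ can forbid many cyclic concatenations, so an unrestricted digit count in $P_R$ is not enough. One needs a genuinely combinatorial lower bound on the number of admissible $n$-cycles in $P_R$ together with uniform distortion control, which in turn requires understanding the geometry of the Dirichlet fundamental domain of $\slt{\Zi}$ and the structure of $A$ in the large-$R$ regime simultaneously. Once such a lower bound is secured, a standard mass-distribution / Bowen-pressure argument closes the gap and yields $\delta_R \to 2$.
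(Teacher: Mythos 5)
Your framing is reasonable and your bracketing observations (monotonicity in $R$, the trivial bound $\delta_R\le 2$ from $\Gamma_R\subset\slt{\Zi}$) are fine, but the proposal stops exactly where the theorem begins. The entire content of the statement is the lower bound $\delta_R\ge 2-\varepsilon$ for $R=R(\varepsilon)$ large, and your plan for it — ``adapt Jarn\'ik,'' ``produce a family of admissible $n$-cylinders of diameter $\asymp e^{-n\lambda}$ and cardinality $\ge e^{n(2-\varepsilon)\lambda}$,'' ``once such a lower bound is secured, a standard mass-distribution argument closes the gap'' — is an assertion of the needed estimate, not a proof of it. Two specific problems: (i) in a continued-fraction-type system the branches do not contract at a single rate $e^{-\lambda}$; the digit $z$ contracts at rate $\asymp |z|^{-2}$, so a count of cylinders at one scale with one common diameter cannot be arranged, and any honest version of your counting argument is forced back into a pressure/partition-function computation with the weights $|z|^{-2s}$ — i.e.\ into exactly the estimate you have deferred; (ii) the ``combinatorial lower bound on admissible $n$-cycles together with uniform distortion control'' that you flag as the main obstacle is the theorem, and you give no mechanism for obtaining it. (Incidentally, the transition restrictions are not the real difficulty: the paper's irreducibility lemma shows any letter reaches any other in three steps, so admissibility costs only a bounded factor. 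The real difficulty is quantitative control of how the truncated pressure approaches the full one.)

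For comparison, the paper closes this gap without any Jarn\'ik-style cylinder counting. It passes to the transfer operator $\mathcal{L}_{s,\infty}$ on the full countable-alphabet shift $(\Sigma,\sigma)$, which is bounded for $\Re(s)>1$ because $\sum_z |z|^{-2\Re(s)}$ converges; by the Mauldin--Urba\'nski theory for finitely primitive countable shifts its leading eigenvalue is $e^{P(s\tau)}$, and by Sullivan and Series the zero of this pressure is the critical exponent of $\slt{\Zi}$, namely $2$, since the limit set is all of $\widehat{\mathbb{C}}$ (so $\lambda_{2,\infty}=1$ comes for free from the group being a lattice, with no counting). The truncated operators $\mathcal{L}_{s,R}$ are then shown to converge to $\mathcal{L}_{s,\infty}$ in the Keller--Liverani sense — the tail estimate $\|(\mathcal{L}_{s,\infty}-\mathcal{L}_{s,R})f\|_\infty \ll \|f\|_\infty \sum_{|z|>R-1}|z|^{-2\Re(s)} \ll \|f\|_\infty/R$ for $\Re(s)>1$ — so the leading eigenvalues converge, and a final bounded-distortion comparison identifies the pressure of the truncated operator on $(\Sigma,\sigma)$ with that of $\mathcal{L}_s$ on $(\Sigma_R,\sigma)$, forcing the solution of $\lambda_{s,R}=1$ into any prescribed neighborhood of $2$. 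If you want to salvage your route, you would need to actually carry out a Hurwitz-continued-fraction analogue of Jarn\'ik with transition constraints and non-uniform contraction, which is substantially more work than the perturbation argument and is precisely the part your write-up leaves blank.
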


Finally, we will need local information about closed geodesics.  This is where expander graphs are used crucially.  We follow the work of \cite{BGS} together with \cite{BKM}.  For $q \in \Zi$, set $\slt{q} = \slt{\Zi}/ (q)$.
\begin{theorem}
\label{affinesieve}
For each $R>8$, there is some absolute spectral gap $\Theta_R>0$ and absolute constants $c_R, C_R > 0$ such that for all square-free $q \in \Zi$ and $\omega \in \slt{q}$ we have the estimate
$$
\left| \#\{g \in \Gamma_R \cap B_X : g \equiv \omega \bmod q\} - \frac{\#(\Gamma_R \cap B_X)}{|\slt{q}|} \right| \ll_R \#(\Gamma_R \cap B_X) E(q,X)
$$
as $X \to \infty$, where 
$$
E(q,X) = \begin{cases} e^{-c_R \sqrt{\log X}} & \GN{q} < C_R \log X \\ \GN{q}^{C_R} X^{-\Theta_R} & \GN{q}> C_R\log X \end{cases}.
$$
\end{theorem}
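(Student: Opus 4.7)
The plan is to run the affine-sieve argument of \cite{BGS}, as refined in \cite{BKM}, through the symbolic-dynamics apparatus built in Chapter~\ref{pollicott}. First, I would realize the count $\#(\Gamma_R \cap B_X)$, following the proof of Theorem~\ref{normball}, via iterates of a Ruelle transfer operator $\mathcal{L}_s$ on a Banach space of H\"older functions on $\Sigma_R$; its leading eigenvalue crosses $1$ precisely at $s=\delta_R$. To encode the congruence condition $g \equiv \omega \pmod{q}$, I would lift to a twisted transfer operator $\mathcal{L}_{s,q}$ acting on $\mathbb{C}[\slt{q}]$-valued H\"older functions through the (left) regular representation of $\slt{q}$. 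The trivial isotype then reproduces exactly $\#(\Gamma_R \cap B_X)/|\slt{q}|$, and the task reduces to bounding the contribution of the non-trivial isotypes, uniformly in $q$.

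The core of the argument is a uniform spectral gap: for each $R>8$, I would show there is $\Theta_R>0$ such that the spectral radius of $\mathcal{L}_{\delta_R,q}$ on the orthogonal complement of the trivial representation is at most $e^{-\Theta_R}$ times the leading eigenvalue of $\mathcal{L}_{\delta_R}$, uniformly over square-free $q \in \Zi$. This is the step where expansion enters. By strong approximation, once one verifies Zariski density of the semigroup generated by $P_R$ in $\text{SL}_2$ (a condition guaranteed by the hypothesis $R>8$, which provides sufficiently many non-commuting admissible generators), it suffices to prove expansion of the Cayley graphs of $\slt{\mathfrak{p}}$ for prime ideals $\mathfrak{p} \subset \Zi$. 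Both split primes (giving $\text{SL}_2(\mathbb{F}_p)$) and inert primes (giving $\text{SL}_2(\mathbb{F}_{p^2})$) must be handled, by combining the Helfgott-type product theorem in $\text{SL}_2(\mathbb{F}_{p^k})$ with the Bourgain-Gamburd $\ell^2$-flattening argument; passage to square-free levels then follows from the standard bootstrap of \cite{BGS,BKM}.

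Granted the spectral gap, I would extract the quantitative bound by expressing the count as a contour integral of the twisted dynamical zeta function $\det(I-\mathcal{L}_{s,q})^{-1}$ and shifting contours past $s=\delta_R$. For $\GN{q} < C_R \log X$, a Dolgopyat-type resolvent bound on vertical strips, combined with a Vinogradov-style optimization of the truncation, produces the $e^{-c_R \sqrt{\log X}}$ savings with implied constants uniform in $q$. For $\GN{q} > C_R \log X$, the loss from summing over the $\asymp \GN{q}^3$ non-trivial matrix coefficients of $\slt{q}$ is polynomial in $\GN{q}$ and is absorbed into the factor $\GN{q}^{C_R}$, while the spectral gap supplies the $X^{-\Theta_R}$ power savings.

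The main obstacle is the uniform spectral gap. Two features are more delicate than in \cite{bk2017}: one must prove expansion over $\slt{\mathbb{F}_{p^2}}$ at inert primes, a case absent in the $\mathbb{Z}$ setting; and one must verify the Bourgain-Gamburd hypotheses--nontrivial girth of the Cayley graphs and Zariski density of the Markov semigroup cut out by the alphabet $P_R$ and the transition matrix $A$--directly from the symbolic data of $(\Sigma_R,\sigma)$ rather than from a clean semigroup generating set. These are precisely the inputs that Chapter~\ref{delta} is designed to supply, and the whole argument hinges on assembling them compatibly with the H\"older Banach space chosen in the proof of Theorem~\ref{normball}.
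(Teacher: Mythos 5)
Your proposal shares the paper's overall skeleton---twisted transfer operators on $\slt{q}$-valued H\"older functions, a spectral gap uniform in square-free $q$, then a renewal/contour analysis with a smoothing kernel to get the two regimes of $E(q,X)$---but it diverges at the crucial expansion step, and there the paper's route is both different and substantially lighter. You propose the thin-group machinery (Zariski density of the Markov semigroup, girth, Helfgott-type product theorems in $\text{SL}_2(\mathbb{F}_{p^k})$, Bourgain--Gamburd $\ell^2$-flattening, and the square-free bootstrap), which would indeed require handling inert primes separately. The paper instead proves (Lemma~\ref{gen}, by an explicit automated search) that pairs of admissible length-$4$ words between any two letters already generate \emph{all} of $\slt{\Zi}$; consequently the congruence Cayley graphs are quotients of the full arithmetic lattice and uniform expansion is imported directly from the analogue of Selberg's $3/16$ theorem for congruence subgroups of $\slt{\Zi}$ (Theorem~\ref{flat}), upgraded to the $\GN{q}^{-1/4}$ operator bound of Theorem~\ref{mu} via quasirandomness (the Frobenius multiplicity bound), with square-free levels assembled by the Fourier--Walsh decomposition of Section~\ref{fw}. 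This sidesteps Zariski-density and girth verification and the $\mathbb{F}_{p^2}$ case entirely; your route is viable in principle but imports far heavier inputs, and your claim that these inputs are ``what Chapter~\ref{delta} is designed to supply'' is a misattribution---Chapter~\ref{delta} only proves $\delta_R \to 2$ and plays no role in the congruence count.

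Two further cautions. First, your appeal to ``a Dolgopyat-type resolvent bound on vertical strips'' overstates what is needed and what the paper proves: the paper has no Dolgopyat estimate; its resolvent bound for large $|\Im s|$ (Theorem~\ref{Mbound}) degrades like $e^{-\epsilon n/\log|\Im s|}$ and comes from the congruence expansion itself, which after the contour shift $-\delta_R+it \mapsto -\delta_R + \tfrac{\epsilon}{2}\min(1,1/\log(1+|t|))+it$ and the smoothed cutoff yields exactly the $e^{-c_R\sqrt{\log X}}$ shape (a genuine Dolgopyat bound would give power savings even at $q=1$, which is neither claimed nor proved). Second, working with $\det(I-\mathcal{L}_{s,q})^{-1}$ presupposes trace-class structure that the H\"older space does not provide; the paper stays with Lalley's renewal equation and the resolvent $(1-M_{s,q})^{-1}$ applied to test functions, and then needs the finite renewal function $N_q^*$ and a sandwiching argument to pass from shift-space counts to the actual matrix count in $B_X$---a step your outline omits.
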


\subsubsection{A First Attempt via the Affine Sieve}\

Theorem~\ref{affinesieve} allows us to follow the affine sieve procedure (see \cite{bgs10}, \cite{sgs}, or \cite{k14}) up to a level of distribution $X^\alpha$ where the exponent $\alpha_R = \Theta_R/C - o(1)$ and show for sufficiently large $\delta_R$ that
\begin{align*}
\#\{ g \in \Gamma_R \cap B_X : p \text{ a Gaussian prime}, p| (\mtrs{g}-4) \implies \GN{p}>X^{\alpha_R}\} \\
\gg X^{2{\delta_R}-o(1)}.
\end{align*}
Since $\mtrs{g}-4$ factors as $(\mtr{g}+2)(\mtr{g}-2)$, any Gaussian prime dividing $\mtrs{g}-4$ must divide one of its linear factors.  This fact allows us to conclude from the ``almost prime'' estimate above that 
$$
\#\{g \in \Gamma_R \cap B_X : (\mtrs{g}-4) \text{ not square-free}\} \ll X^{4-\alpha_R}.
$$
See the proof of Theorem~\ref{sqfr} on page~\pageref{sqfr} for details.  

Comparing the two estimates, we would have our main result if only we could show that 
$$2\delta_R > 4 - \Theta_R/C.$$
It seems that making $\delta_R$ near 2 (as in Theorem~\ref{deltabig}) suffices.  However, the shared dependence of $\delta_R$ and $\Theta_R$ on $R$ cannot at this time be decoupled, so this attack fails, and we must use something more than expansion.

\subsubsection{Beyond Expansion}\

To have stronger control on the exponent of distribution, we create bilinear (in fact, multilinear) forms, replacing $\Gamma_R \cap B_X$ by a specially constructed subset $\Pi$, see Chapter~\ref{siftingset}.  We analyze the set
$$
\#\{ \omega \in \Pi : \mtrs{\omega}-4 \equiv 0 \bmod q\}
$$
via abelian harmonic analysis (on $\Zi / (q)$). The characters of small order up to some intermediate level $Q_0$ can be handled by expansion.  The characters of larger order are now dealt with by appealing to bilinear forms techniques, namely Cauchy-Schwarz and estimating exponential sums.  Our methods allow us to sieve up to the absolute level of distribution $\alpha=1/16 - \epsilon$ from which Theorem~\ref{main} follows.

\section{Closed Geodesics and Dirichlet Forms}

We express the upper half space model of hyperbolic $3$-space as a subset of Hamilton's quaternions, i.e.
$$\mathbb{H}^3 = \{z + tj: z \in \mathbb{C}, t \in \mathbb{R}_+\}$$
where $i^2=j^2=-1$ and $ij=-ji$.  The group $PSL_2(\mathbb{C})$ acts on $\mathbb{H}^3$ by
$$
\begin{pmatrix} a & b \\ c & d \end{pmatrix} \cdot (z+tj) := (a(z+tj)+b)(c(z+tj)+d)^{-1}.
$$
The inverse above should be interpreted as the Hamiltonian inverse.  Considering the action of $PSL_2(\mathbb{C})$ on the boundary $\hat{\mathbb{C}}$ gives a correspondence with M\"{o}bius transformations.

\subsubsection{The Picard Group $PSL_2(\Zi)$}\

Inside of $G=PSL_2(\mathbb{C})$, we have the discrete subgroup $\Gamma = PSL_2(\Zi)$, sometimes referred to as the Picard group.  We can express
\begin{align*}
\Gamma = \left\langle  
  \begin{pmatrix} 1 & 1 \\ 0 & 1 \end{pmatrix}, \begin{pmatrix} 1 & i \\ 0 & 1 \end{pmatrix},
    \begin{pmatrix} 0 & -1 \\ 1 & 0 \end{pmatrix},
    \begin{pmatrix} -i & 0 \\ 0 & i \end{pmatrix} \right\rangle
\end{align*}
and this allows us to write the fundamental Dirichlet domain for the quotient $\Gamma \backslash \mathbb{H}^3$ as 
$$
\mathcal{F} := \{ (z,t) | Re(z) \in [-1/2, 1/2], Im(z) \in [0,1/2], |z|^2+t^2>1\}.
$$
\begin{figure}[h!]
\includegraphics[scale=.4]{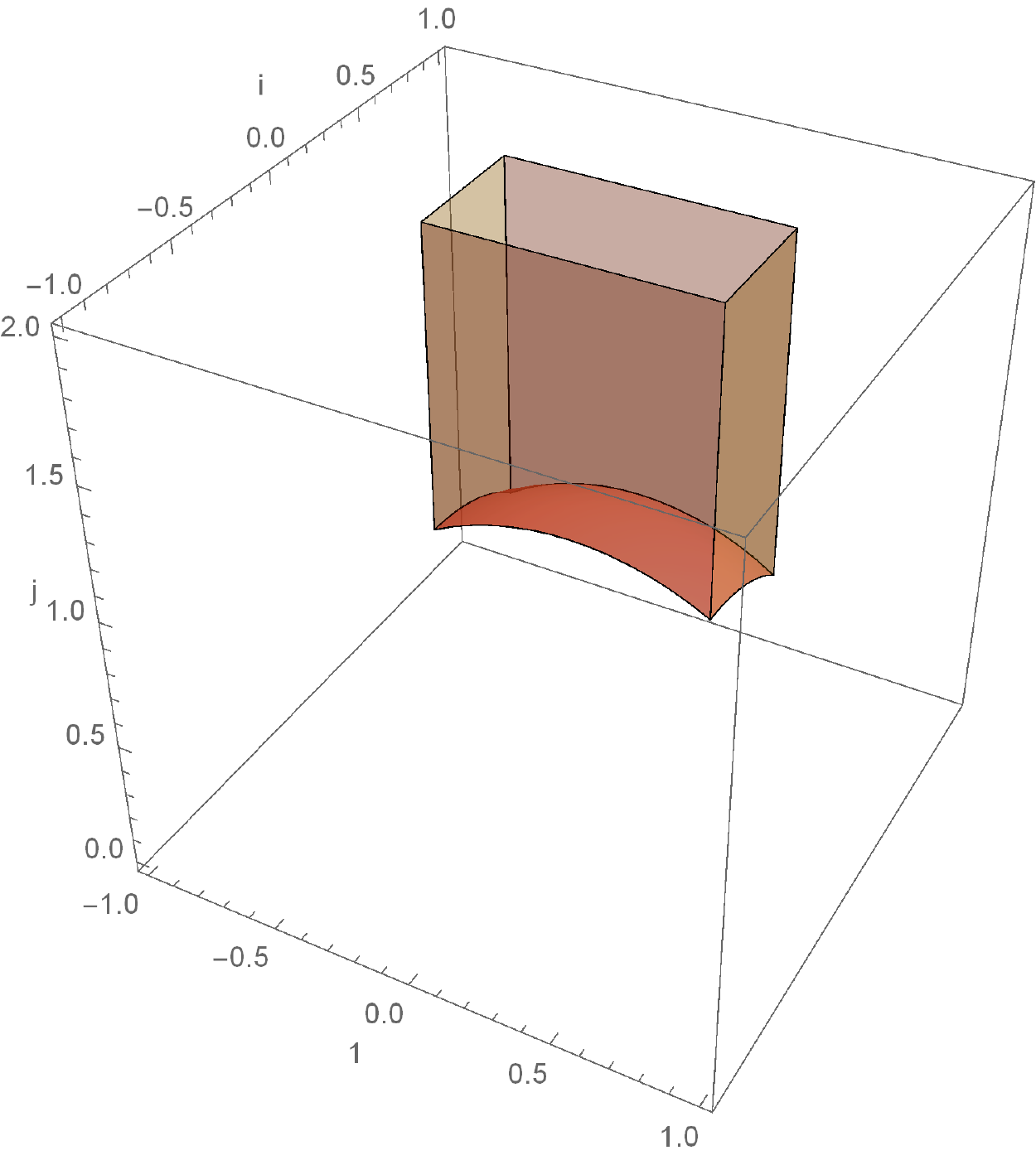}
\centering
\caption{Fundamental Domain for $\slt{\Zi} \backslash \mathbb{H}^3$}
\end{figure}

\subsubsection{Closed Geodesics}\
Write 
$$M = \left\{ \begin{pmatrix} e^{i\theta} & 0 \\ 0 & e^{-i\theta} \end{pmatrix}: \theta \in [0,2\pi)\right\}.$$  
A geodesic which is closed in $\Gamma \backslash SL_2(\mathbb{C})/M$ is identified by $(\Gamma g M) a_l = (\Gamma g M)$.  Since $a_l$ commutes with $M$, this translates to
$$
gMa_l = AgM
$$
for some $A \in \Gamma$, $g \in G$.  More explicitly, we may write
\begin{align*}
A &= g a_l m g^{-1} \\
&= g \begin{pmatrix} e^{l/2} & 0 \\ 0 & e^{-l/2} \end{pmatrix} \begin{pmatrix} e^{i\theta/2} & 0 \\0 & e^{-i\theta/2} \end{pmatrix} g^{-1}
\end{align*}
for some $m \in M$.

\subsubsection{Properties of Closed Geodesics}\
\begin{itemize}
\item Trace and Eigenvalues:
    
    By the equation above, we infer that $A$ is diagonalized by $g$ with eigenvalues $\exp(\pm (l/2+i\theta/2))$.  Following Beardon, we call $A$ hyperbolic if $\mtrs{A} \in [4,\infty)$ and strictly loxodromic if $\mtrs{A} \not\in[0,\infty)$.  The term loxodromic (referring to transformations with $2$ fixed points in $\hat{\mathbb{C}}$) encompasses both.  The trace of $A$ is 
    $$
    \mtr{A} = 2 \cosh(l/2+i\theta/2).
    $$
    On the other hand, we can express the eigenvalue as
    $$
    e^{l/2+i\theta/2} = \frac{\mtr{A} + \sqrt{\mtrs{A}-4}}{2},
    $$
    and this gives us a way to determine the closed geodesic associated to an arbitrary loxodromic transformation in $PSL_2(\Zi)$.
    
\item Length and Primitivity:
    
    The length of the geodesic is given by $l$.  Note that any powers of $A$ satisfy $A^k = g (a_lm)^k g^{-1}$, which would suggest that their length is $kl$.  However, $A^k = (ga_lmg^{-1}) \cdots (ga_lmg^{-1})$ indicates that we are traversing the same closed geodesic $k$ times.  So, we call $A$ primitive when it is not a power of another element in the Picard group.
    
  \item Equivalent geodesics:
    
    Technically, a closed geodesic is an element of the quotient $SL_2(\Zi) \backslash SL_2(\mathbb{C})/M$ satisfying $(\Gamma g M) a_l = (\Gamma g M)$.  We chose a particular representative $A$, but any conjugation $BAB^{-1}$ where $B \in \Gamma$ would give the same geodesic.  Hence, geodesics are equivalent if they are in the same $\Gamma$ conjugacy class.
    
    \item Fixed points, visual points:
    
    The loxodromic transformation $A$ has two fixed points in $\mathbb{C}$ which can be found by solving 
    $$
    z = \frac{az+b}{cz+d}.
    $$
    In other words, the fixed points are roots of the (homogenized) binary quadratic form $Q_A(1,z) = cz^2+(d-a)z-b$ with coefficients in $\Zi$.  Solving this, we get
    $$
    \alpha = \frac{(a-d) + \sqrt{\mtrs{A}-4}}{2c}, \hspace{.3in} \overline{\alpha} = \frac{(a-d) - \sqrt{\mtrs{A}-4}}{2c}.
    $$
    On the other hand, if we have $g$ we can calculate
    $$
    \lim_{l \to \infty} ga_l.j = \alpha.
    $$
    A similar statement (as $l \to -\infty$) gives the reverse direction of the geodesic.  These points $\alpha,\overline{\alpha}$ are referred to as the visual points of the geodesic.

\end{itemize}

\subsubsection{Dirichlet Forms}\

Many of the observations above suggest a correspondence between closed geodesics and binary quadratic forms with coefficients in $\Zi$, also known as Dirichlet forms.  We have
\begin{align*}
    A= \begin{pmatrix} a & b \\ c & d \end{pmatrix} \mapsto Q_A(x,y) = cx^2+(d-a)xy-by^2 
\end{align*}
modulo the greatest common divisor of $c$, $d-a$, and $b$ and up to choice of unit.  This correspondence is explained further in \cite{sarnak}.  We associate the discriminant of the Dirichlet form $D_A = \mtrs{A}-4$ with the closed geodesic corresponding to $A$.

\subsubsection{Fundamental Discriminants for Dirichlet Forms}\
\label{fd}

Note that a discriminant $D$ of a Dirichlet form must be a square mod 4 and hence $D \bmod 4 \in \{0,1,-1,2i\}$.   Moreover, each $D \in \mathbb{Z}[i]$ with a square residue mod $4$ is a discriminant of some form. We call a discriminant $D$ fundamental if it cannot be expressed as $D = q^2 D_0$ where $q$ is a non-unit and $D_0$ is also a discriminant.  This is equivalent to another other common definition which states that $D$ is fundamental if any form $Q(x,y) = ax^2+bxy+cy^2$ with discriminant $D$ must be primitive (i.e. $(a,b,c)=1$.)    Note that this also agrees with the work of Hilbert, i.e. that $D$ is fundamental if and only if $|D|$ is the relative discriminant of the extension of $\mathbb{Q}(i,\alpha)$ over $\mathbb{Q}(i)$.

A geodesic is fundamental if its associated discriminant is fundamental.  We will sieve down to geodesics with square-free discriminant, only catching the $D = \pm 1 \bmod 4$ case.

\section{A Symbolic Encoding of Geodesics}
\label{pollicott}

We now describe the results of Pollicott in \cite{PolGauss}.  Denote a circle of radius $1$ about a center $z$ as $C(z)$.  Consider the region $\mathcal{X}$ in $\mathbb{H}^2$ exterior to the three circles $C(i), C(1), $ and $C(-1)$ where we have the removed vertical lines $\{z: \Im(z) = k/2: k \in \mathbb{Z}\}$ and horizontal lines $\{z : \Re{z} = k/2 : k \in \mathbb{N}\}$. The region $\mathcal{X}$ is shown in blue in the figure below. 

\begin{figure}[h]
\includegraphics[scale=.8]{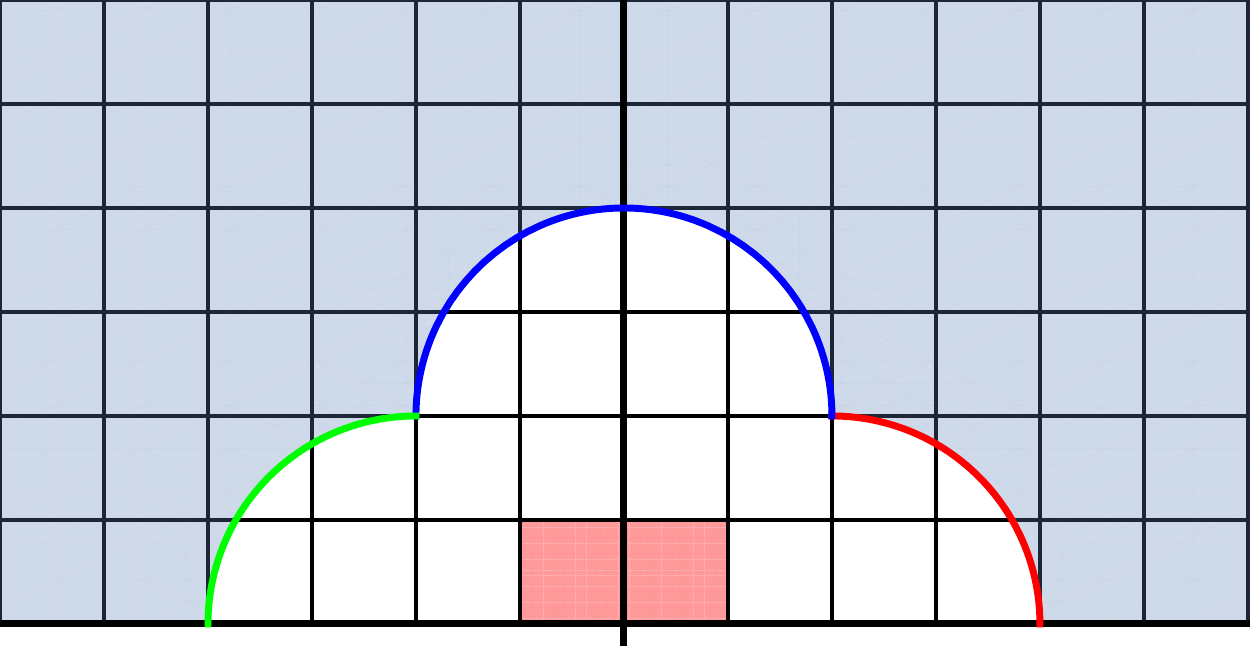}
\centering
\caption{The Fundamental Region $\mathcal{X}$}
\end{figure}

For each $z \in \mathcal{X}$ there is a unique closest (in Euclidean distance) Gaussian integer, which we denote $\round{z}$. The map $\hat{f}$ will move $z$ to the unit square centered about the origin by first subtracting $\round{z}$.  If $\Im(z-\round{z})<0$, then $f$ rotates about the origin by $\pi$.  Finally, the involution $\omega \mapsto -1/\omega$ is performed to return to $\mathcal{X}$.  Formally, we define $\hat{f}$ as
\begin{align*}
\hat{f}(z) = \begin{cases} \frac{(-1)}{z - \round{z}} & \Im(z)>\Im(\round{z}) \\
                     \frac{1}{z - \round{z}}    & \Im(z)<\Im(\round{z})\end{cases}.
\end{align*}

Define $S:z \mapsto -1/z$ and $f:= S \circ \hat{f} \circ S$ on $S(\mathcal{X})$.  Note that $S(\mathcal{X})$, the image of $\mathcal{X}$ under $S$, is contained within $\{x+iy : |x|<1/2, 0<y<1/2\}$.  Pollicott proves the following:
\begin{theorem}
There is a bijection between closed geodesics and the periodic points of $f$.
\end{theorem}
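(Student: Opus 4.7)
The plan is to interpret iterations of $f$ as Hurwitz's complex continued fraction algorithm, and then invoke the classical dictionary between primitive loxodromic conjugacy classes in $\Gamma$ and closed geodesics on $\Gamma \backslash \mathbb{H}^3$ to exhibit the bijection.

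First, I would identify each step of $\hat{f}$ with the M\"{o}bius action of an explicit element of $\Gamma$. The translation $z \mapsto z - \round{z}$ is the action of $\begin{pmatrix} 1 & -\round{z} \\ 0 & 1 \end{pmatrix}$; the conditional rotation by $\pi$ in the first branch is the action of a unit matrix such as $\begin{pmatrix} -1 & 0 \\ 0 & 1 \end{pmatrix}$; and the inversion $z \mapsto -1/z$ is the action of $\begin{pmatrix} 0 & -1 \\ 1 & 0 \end{pmatrix}$. Combining these, each application of $\hat{f}$ at $z$ is the action of a matrix $M_{a(z)} \in \Gamma$ determined by the Gaussian integer $a(z) := \round{z}$, so that the sequence of digits $(a(\hat{f}^k z))_{k \geq 0}$ furnishes a complex continued fraction expansion of $z$. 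Conjugation by $S$ moves the dynamics to $S(\mathcal{X})$ and gives $f$.

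Second, $f$ acts on these expansions as the left shift: if $z$ has expansion $(a_0, a_1, a_2, \ldots)$, then $f(z)$ has expansion $(a_1, a_2, \ldots)$. Thus $z$ is periodic of minimal period $n$ under $f$ if and only if its expansion is purely periodic with period $n$, which is equivalent to $z$ being fixed by the product $A = M_{a_0} M_{a_1} \cdots M_{a_{n-1}} \in \Gamma$. The natural extension of $f$ produces a second, distinct fixed point $\overline{\alpha} \in \hat{\mathbb{C}}$ from the backward orbit, so $A$ has two distinct fixed points on $\hat{\mathbb{C}}$ and is therefore loxodromic; minimality of the period forces $A$ to be primitive. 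This yields a well-defined map from periodic orbits of $f$ to $\Gamma$-conjugacy classes of primitive loxodromic elements, i.e.\ to closed geodesics, whose length can be read off from $\mtr{A} = 2\cosh(l/2 + i\theta/2)$ as in Section~2.

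Third, for surjectivity, start from a primitive loxodromic $A \in \Gamma$ with fixed points $\alpha, \alpha'$ and construct a $\Gamma$-translate of $\alpha$ in $S(\mathcal{X})$ whose $f$-orbit is periodic. The strategy is the complex analogue of reduction of real quadratic irrationals: iterating $f$ produces an infinite expansion, and because $\alpha$ is a quadratic surd over $\mathbb{Q}(i)$, the set of $\Gamma$-translates of $\alpha$ visited inside $S(\mathcal{X})$ is finite, forcing eventual periodicity of the orbit. A reduction lemma (selecting a canonical representative of $\alpha$ in its $\Gamma$-orbit, analogous to the reduction of Dirichlet forms sketched in Section~\ref{fd}) upgrades eventual periodicity to pure periodicity. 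Injectivity then follows from the uniqueness of the digit sequence for each point of $S(\mathcal{X})$, since the partition of $S(\mathcal{X})$ into digit-cylinders is a fundamental domain for the action of the relevant generators of $\Gamma$, so distinct periodic orbits give non-conjugate primitive loxodromic elements.

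I expect the main obstacle to be the reduction step: verifying in the Gaussian setting that every loxodromic fixed point admits a $\Gamma$-translate in $S(\mathcal{X})$ with a purely periodic expansion. The removal of the horizontal and vertical half-integer lines and the case split in the definition of $\hat{f}$ exist precisely to resolve boundary ambiguities, so the heart of the argument is to check that, with these choices, the algorithm is well-defined on every $\Gamma$-translate of $\alpha$ and that no orbit escapes or stalls on a forbidden locus. This is what is established in Pollicott's construction \cite{PolGauss}, and the proof here amounts to importing and specialising that construction to the Picard setting.
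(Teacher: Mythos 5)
The paper does not actually prove this statement: it is quoted verbatim from Pollicott, and the only justification offered is the citation to \cite{PolGauss}. Your outline follows exactly that route --- the Hurwitz continued-fraction dictionary between purely periodic expansions and primitive loxodromic conjugacy classes --- and, in the end, you defer the two genuinely hard points (that every loxodromic fixed point has a $\Gamma$-translate in $S(\mathcal{X})$ whose expansion is well defined, never lands on the deleted grid, and is purely periodic; and that distinct periodic orbits yield non-conjugate primitive elements) to Pollicott's construction, which is precisely what the paper itself does. So as a comparison there is no divergence of method, but as a standalone argument the crux is still outsourced rather than proved; your ``reduction lemma'' is named, not established.

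One step of your sketch is wrong as stated: having two distinct fixed points on $\hat{\mathbb{C}}$ does not make $A$ loxodromic, because elliptic elements of $PSL_2(\mathbb{C})$ (of which $PSL_2(\Zi)$ has plenty, all of finite order) also fix two boundary points; only parabolics are excluded by that observation. What actually rules out the elliptic case is the expansivity of $f$: on $S(\mathcal{X})\subset\{x+iy:|x|<1/2,\ 0<y<1/2\}$ one has $|f'(z)|=|z|^{-2}>2$, so at a point $\alpha$ of period $n$ the local M\"obius representative of $f^n$, namely $A^{\pm1}$, has derivative at its fixed point of modulus at least $2^n\neq 1$; a multiplier of modulus different from $1$ is exactly the loxodromic condition, whereas an elliptic element has unit multiplier. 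The same expansion estimate is what you need (together with uniqueness of the digit sequence) to make ``minimality of the period forces $A$ primitive'' and your injectivity claim precise, since conjugate elements must be shown to arise only from shifts of the same periodic word. Finally, the bijection should be read at the level of periodic \emph{orbits} (equivalently $\Gamma$-conjugacy classes): a closed geodesic corresponds to the whole $\sigma$-orbit of length $n$, which your map handles correctly but the phrase ``periodic points'' obscures.
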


Furthermore, \cite{PolGauss} shows that $(\mathcal{X},f)$ admits a Markov partition.  This will allow us to study the system via a simplified encoding.  The naive choice of partition of $\mathcal{X}$ into connected regions between the grid lines is almost correct.  We must introduce two more circles, $C(1+i)$ and $C(-1+i)$ and separate each region intersecting the boundary of either of the circles.  The figure below illustrates the partition which we denote $\mathcal{P}$.  Each connected region after removing $C(1+i)$, $C(-1+i)$ and the $1/2$-spaced grid is a part in the partition.
\begin{figure}[h]
\includegraphics[scale=.8]{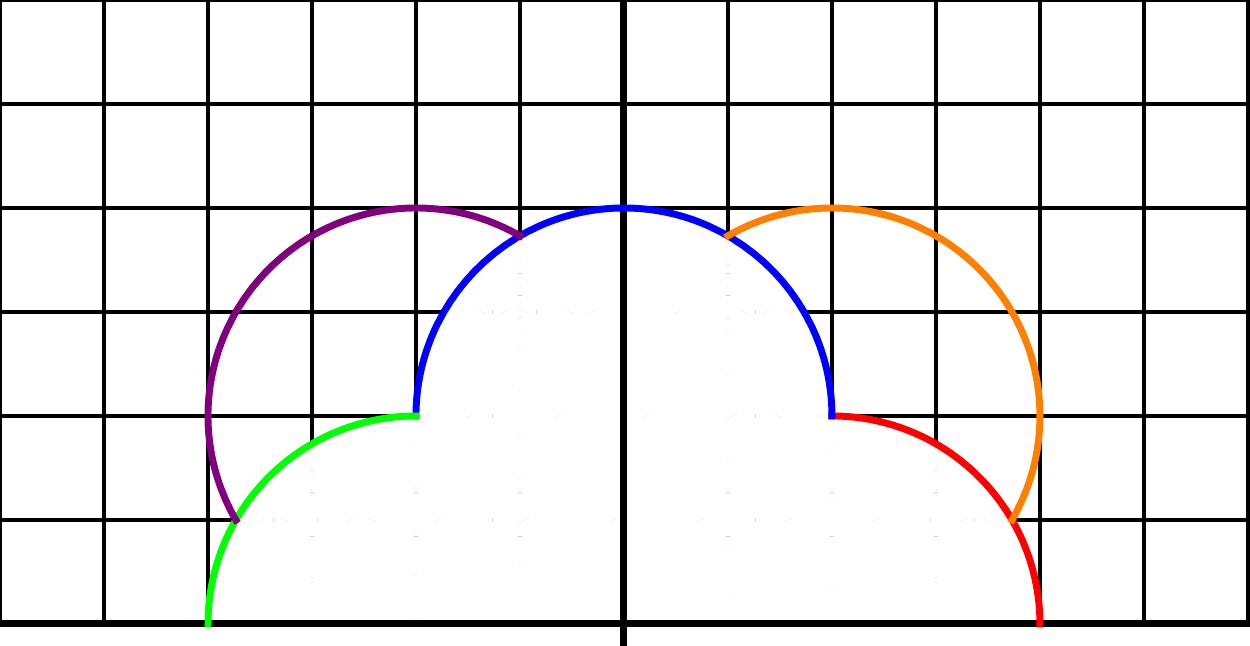} 
\centering
\caption{The Parts in $\mathcal{P}$}
\end{figure}

For each part $\rho \in S(\mathcal{P})$, we associate a distinct label $p$.  The set of all labels is the alphabet for our shift map.  Since $S(\mathcal{P})$ is a Markov partition, $(\Sigma, \sigma)$ gives a symbolic representation of $(S(\mathcal{X}),f)$ where
$$
\Sigma:= \{ (p_1, p_2, \ldots ) : p_i \leftrightarrow \rho_i \in S(\mathcal{P}), f(\rho_i) \supset \rho_{i+1} \}
$$
and $\sigma$ is the shift operator, i.e. $\sigma(p_1, p_2,p_3, \ldots ) = (p_2, p_3, \ldots)$.  More precisely, for any finite admissible word $a = (p_1 \ldots p_n)$ (meaning $a$ occurs as a subword of some $\alpha \in \Sigma$) we define the cylinder set $C_a$ as
$$
C_a = \bigcap_{k=1}^n f^{-k}(\rho_k).
$$
Since $|f'(z)| \leq \frac{1}{2}$, we have that the diameter of $C_\alpha$ is at most $\frac{1}{2^n}$.  We also have $C_{(p_1, \ldots, p_n)} \subset C_{(p_1, \ldots, p_{n-1})}$.  Therefore the map  
\begin{align*}
\pi : \Sigma &\to S(\mathcal{X}) \\
    (p_1, p_2, \ldots) &\mapsto \bigcap_{k=1}^\infty C_{(p_1, \ldots, p_k)}
\end{align*}
is well-defined.  Since the interiors of distinct cylinders of length $n$ (i.e. the cylinder defined on a word of length $n$) are disjoint, $\pi$ is one-to-one.  The image of $\pi$ is all of $S(\mathcal{X})$ up to a set of Lebesgue measure $0$ (the orbit of the grid under $f$ must be removed) and the following diagram commutes:
\[ \begin{tikzcd}
\Sigma \arrow{r}{\sigma} \arrow[swap]{d}{\pi} &\Sigma \arrow{d}{\pi} \\%
S(\mathcal{X}) \arrow{r}{f}& S(\mathcal{X})
\end{tikzcd}
\]
In particular, we have a bijection between the periodic points of $\sigma$ and the periodic points of $f$.  This allows us to study closed geodesics by analyzing the system $(\Sigma, \sigma)$.

From the correspondence between periodic orbits in $(S(\mathcal{X}), f)$ and closed geodesics, we can measure how close a geodesic is to the cusp in $\slt{\Zi} \backslash \slt{\mathbb{C}}$ by determining the minimum distance from origin to a point in the orbit of the associated periodic point.  So define $\mathcal{P}_R \subset \mathcal{P}$ by only allowing parts from $\mathcal{P}$ if they lie completely inside the ball of radius $R$ centered at the origin.  Recall that the parts $\mathcal{P}$ were defined for the conjugate system $(\mathcal{X}, \hat{f})$ and so the images of parts in $\mathcal{P}_R$ under $S$ fall outside a small ball centered at the origin.  The figure below has the parts included in $\mathcal{P}_4$ shaded.
\begin{figure}[h]
\includegraphics[scale=.5]{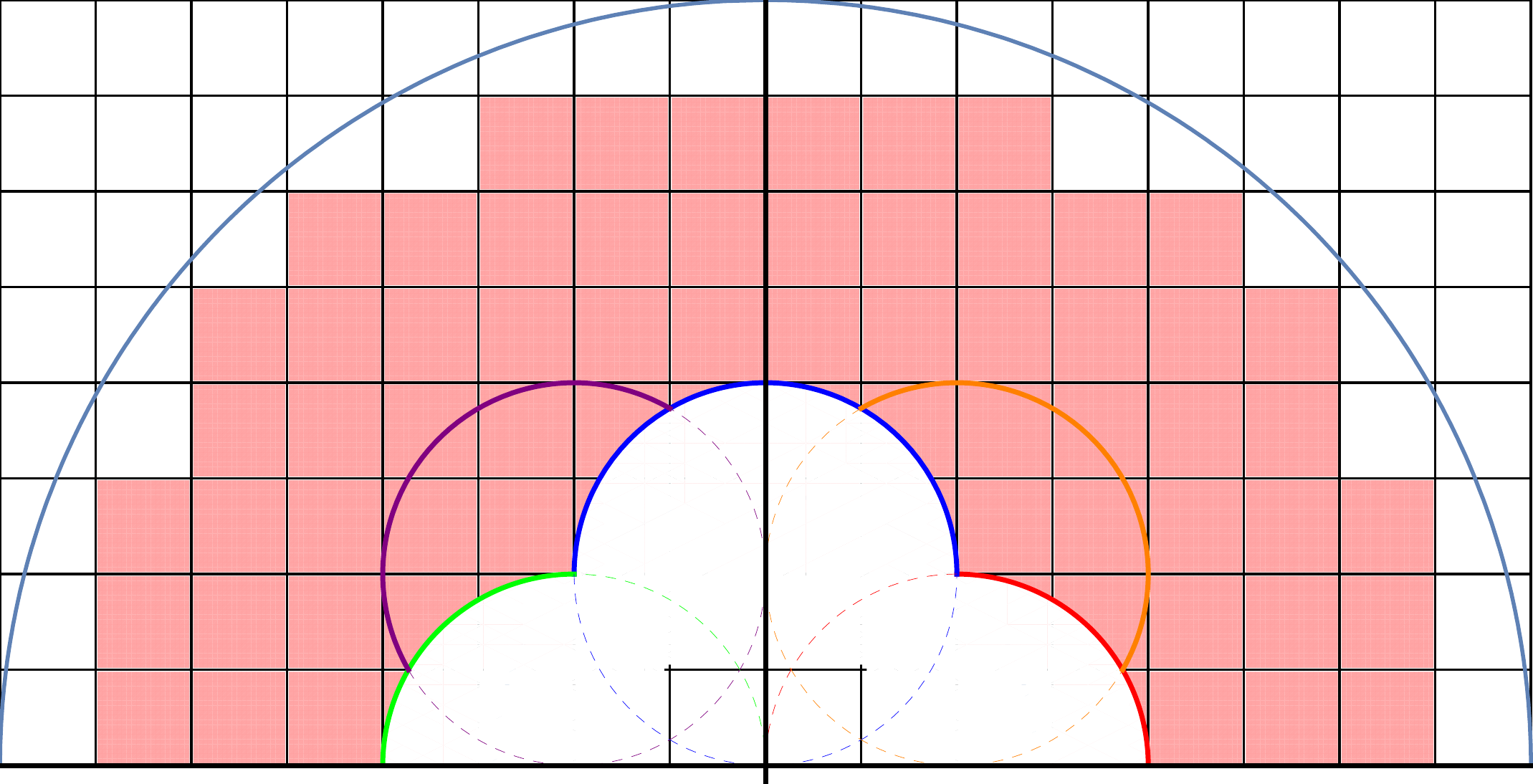}
\centering
\caption{Parts in $P_4$}
\end{figure}

 Our corresponding symbolic encoding is 
$$
\Sigma_R:= \{ (p_1, p_2, \ldots ) : p_i \leftrightarrow \rho_i \in \mathcal{P}_R, f(\rho_i) \supset \rho_{i+1} \}.
$$
The new system $(\Sigma_R, \sigma)$ is now a subshift of finite type as its alphabet is finite.  The problem of counting low-lying geodesics translates into a problem of counting periodic points of $\sigma$ in $\Sigma_R$.

\section{Counting Geodesics via Thermodynamic Formalism}
\label{lall}

The asymptotic for geodesics derived in this chapter is a straightforward application of the work of Lalley in \cite{Lalley}.  We summarize the ideas which give rise to the method in the next section towards an asymptotic for geodesics counted by congruence classes.

For $a \in \Sigma$, we define the distortion function $\tau(a) := \log|f'(\pi(a))|$.  The $N$-th Birkhoff sum for the distortion function is then $S_N \tau(a) := \sum_{k=0}^{N-1} \tau(\sigma^k a)$ where $S_0(a) = 0$.  If $\sigma^N(a) = a$ and in particular $a$ is a visual point corresponding to a closed geodesic, then $S_N \tau(a) = -l$ where $l$ is the length of the geodesic.  This gives some  indication that the following function will be useful in counting geodesics. For $T>0$ and $a \in \Sigma_R$ Lalley's renewal function is defined as 
$$
N(T,a) := \sum_{k=0}^\infty \sum_{b : \sigma^k(b)=a} g(y) \ind{S_k\tau(b) \leq T}.
$$
Partitioning the sum by the preimage $\sigma^{-1}(a)$, we arrive at the following recursive relation, known as the renewal equation
$$
N(T,a) = g(a) \ind{a \leq T} + \sum_{b: \sigma(b)=a} N(T-\tau(b), b).
$$
In order to analyze the renewal function, we are led to study its Laplace transform and a certain transfer operator.

For a continuous function $f$ defined on $\Sigma_R$ and $0<\rho<1$, we define
\begin{align*}
\text{var}_n (f) &:= \sup\{ |f(a)-f(b)| : a_i = b_i \text{ for all } 0 \leq i \leq n \}, \\
|f|_\rho &:= \sup_{n \geq 0} \frac{\text{var}_n(f)}{\rho^n}.
\end{align*}
Then $\mathcal{F}_\rho:=\{f : |f|_\rho<\infty\}$ is the space of H\"{o}lder continuous functions which is a Banach space with norm $||\cdot ||_\rho = |\cdot|_\rho + || \cdot||_\infty$.  The transfer operator, depending on $s \in \mathbb{C}$, acts on $\mathcal{F}_\rho$ as follows:
$$
\mathcal{L}_s f(a) := \sum_{\substack{b \\ \sigma(b)=a}} e^{s \tau(b)} f(b).
$$

First note that $\mathcal{L}_s f(a)$ is a bounded linear operator.  Additionally, when $s \in \mathbb{R}$ the coefficients in the sum are all positive.  For real $s$, would like to compare the spectrum of $\mathcal{L}_s$ to that of an positive matrix.  In particular, we would like to apply an analogue of the Perron-Frobenius theorem.  In order to do so, we must establish a few more properties of the system $(\Sigma_R, \sigma)$.

Recall that a matrix $A$ is irreducible if for each position $(i, j)$ there is some power of $A$ such that the $(i,j)$-th entry is positive.  In analogy, we say that $(\Sigma_R, \sigma)$ is irreducible if for each two states $p_1, p_2 \in \mathcal{P}_R$ there is some finite admissible word beginning with $p_1$ and ending in $p_2$.
\begin{lemma}
$(\Sigma_R, \sigma)$ is irreducible as long as $R\geq 4$.
\end{lemma}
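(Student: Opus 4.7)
The plan is to establish strong connectivity of the directed graph $G_R$ whose vertices are the parts in $\mathcal{P}_R$ and whose edges record admissible transitions, namely $p \to p'$ whenever $f(\rho_p) \supset \rho_{p'}$. By the Markov property of the partition, irreducibility of $(\Sigma_R, \sigma)$ is exactly strong connectivity of $G_R$, so for each pair $p_1, p_2 \in \mathcal{P}_R$ I must exhibit a directed path from $p_1$ to $p_2$ with all intermediate vertices in $\mathcal{P}_R$.

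My strategy is to route every such path through a single central ``hub'' $\rho_\ast \in \mathcal{P}_R$, chosen to lie well interior to $\mathcal{X}$---concretely, I would pick one of the parts near the Gaussian integer $1+i$, which sits safely inside the disc of radius $4$ and away from all of the cutting circles $C(\pm 1), C(i), C(\pm 1 + i)$. For such a $\rho_\ast$, the image $\hat{f}(\rho_\ast)$ sweeps out a geometrically large subregion of $\mathcal{X}$, since the inversion branch of $\hat{f}$ is most expanding on small regions well away from the origin. The first key step is to verify that this image covers every part of $\mathcal{P}_R$ corresponding to a ``small'' Gaussian-integer digit $\round{\cdot}$; when $R \geq 4$ these are abundant enough that, combined with the Markov property, one obtains an outgoing edge from $\rho_\ast$ to most of $\mathcal{P}_R$ directly, and to the remainder via a short chain.

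For the converse direction---that every $\rho \in \mathcal{P}_R$ admits an admissible orbit terminating at $\rho_\ast$---I would appeal to the expansion estimate $|f'| \leq \tfrac12$ stated earlier in the section: cylinders of length $n$ have diameter at most $2^{-n}$, while $\rho_\ast$ has a fixed positive diameter, so iterating $f$ from an arbitrary $\rho$ eventually produces an image covering $\rho_\ast$, whence the Markov property converts covering into admissibility. The main obstacle, and the point where the hypothesis $R \geq 4$ is essential, is ensuring that the intermediate steps of this orbit can be kept inside $\mathcal{P}_R$. I expect this to reduce to an explicit case analysis along the ``border'' of $\mathcal{P}_R$: for each part lying near the cutoff radius, one must exhibit an admissible neighbor in $\mathcal{P}_R$ closer to the hub, then iterate. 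The bound $R \geq 4$ should be close to tight---for smaller $R$ the allowed digits become too sparse to cover the neighbors of certain boundary parts, disconnecting $G_R$---and verifying that no such bottleneck persists once $R = 4$ will be the real content of the proof.
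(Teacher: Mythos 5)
There is a genuine gap: the step you yourself flag as ``the real content of the proof'' --- keeping the intermediate letters of the connecting orbit inside $\mathcal{P}_R$ --- is exactly what is never supplied, and your expansion argument cannot supply it. The observation that cylinders of length $n$ have diameter $\leq 2^{-n}$ shows that forward images of a part eventually cover $\rho_\ast$ \emph{in the unrestricted system}: the resulting admissible word lives in the full countable-alphabet shift $\Sigma$, and its intermediate digits (the Gaussian integers $\round{\cdot}$ picked up along the way) may be arbitrarily large, i.e.\ the intermediate parts may lie outside the radius-$R$ ball and hence outside $\mathcal{P}_R$. Irreducibility of $\Sigma_R$ is precisely the assertion that one can avoid this, so deferring it to an unspecified ``case analysis along the border'' leaves the lemma unproved. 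Some of the supporting heuristics are also shaky: $|\hat{f}'(z)| = |z-\round{z}|^{-2}$ is governed by the distance to the \emph{nearest lattice point}, not the distance to the origin, and a part ``near $1+i$'' is necessarily adjacent to the bounding circles $C(1)$ and $C(i)$ (indeed this is why the extra circle $C(1+i)$ had to be introduced into the partition), so the claim that a single hub part has an image covering all small-digit parts in one step is not evident and is not verified.

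The paper avoids all of this with a short, uniform covering argument in the conjugate system $(\mathcal{X},\hat{f})$: for \emph{every} part $p\in\mathcal{P}_R$, the image $\hat{f}(p)$ already contains one of eight explicit regions near the origin; each such region contains a full square part (which lies in $\mathcal{P}_4\subset\mathcal{P}_R$); the image of a square part is the union of the even or the odd regions; and one further iterate covers all eight, whence $\mathcal{P}_R\subset\hat{f}^3(p)$. Because the intermediate parts are always these square parts well inside radius $4$, admissibility within $\Sigma_R$ is automatic and no border analysis or long orbit is needed --- the connecting words have length at most $3$. If you want to salvage your hub strategy, the missing ingredient is exactly such a one-step (or uniformly bounded) covering statement valid for \emph{every} part of $\mathcal{P}_R$, with the covered region consisting of parts of uniformly small digit; without it the argument does not close.
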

\begin{proof}
By our definition of $\Sigma_R$, a finite subword of the form $(p_{i_1}, p_{i_2}, \ldots, p_{i_n})$ must satisfy $\hat{f}(p_{i_k}) \supset p_{i_{k+1}}$.  In other words, we must show that for each part in $p \in \mathcal{P}_R$ that there is some $k$ such that $\mathcal{P}_R \subset \hat{f}^n(p)$.

One may recall from the diagrams in the previous section that for any part $p \in \mathcal{P}_R$, the image under $\hat{f}$ contains at least one of the following regions $(1)-(8):$
\begin{figure}[h!]
\includegraphics[scale=.5]{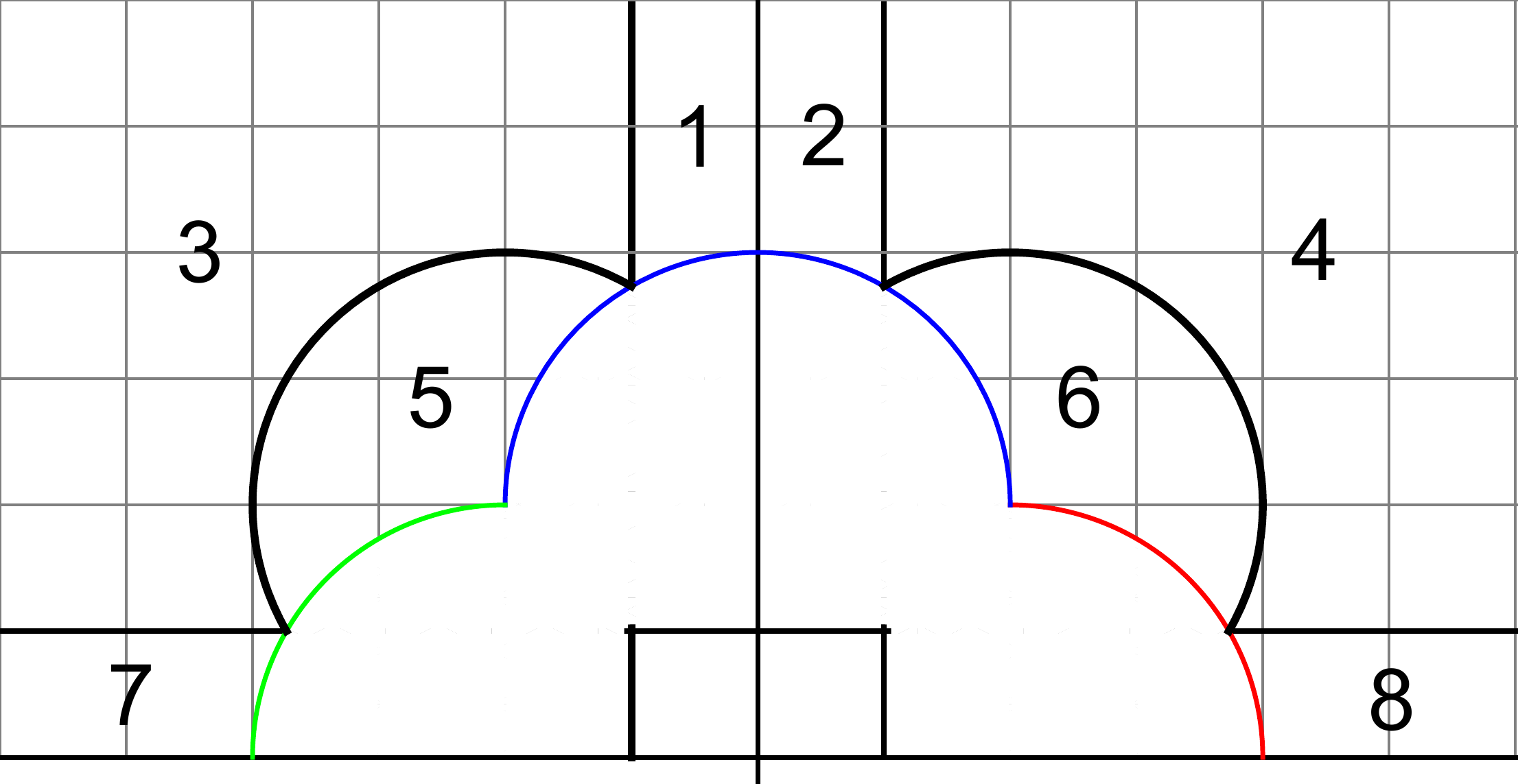}
\centering
\caption{The Main $8$ Regions}
\label{8regions}
\end{figure}
Each region contains at least one square part.  The image of a square part under $\hat{f}$ is either the union of the even regions or the odd regions.  In either case, the next iterate of $\hat{f}$ is the union of all $8$ regions.  Hence $\mathcal{P}_R \subset \hat{f}^3(p)$ for any $p$.  
\end{proof}

A state $p \in \mathcal{P}_R$ is periodic of period $k$ if any finite admissible word beginning and ending in $p$ must have length divisible by $k$.  The period of the system $(\Sigma_R, \sigma)$ is the greatest common factor of the periods of all of its states and a system is said to be aperiodic if this greatest common factor is 1. 
\begin{lemma}
$(\Sigma_R, \sigma)$ is aperiodic as long as $R\geq 4$.
\end{lemma}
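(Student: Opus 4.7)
The plan is to invoke the standard fact that in an irreducible subshift of finite type every state has the same period, so it suffices to exhibit a single state $p \in \mathcal{P}_R$ that admits admissible loops of coprime lengths. Once this is done, the period of the system, being the gcd of the periods of its states, is automatically $1$.

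I would pick a convenient square part $s \in \mathcal{P}_R$ and reuse the structural facts that drove the previous lemma. Recall from that proof that $\hat{f}(s)$ is the union of exactly four of the eight regions of Figure~\ref{8regions} (either the even-indexed or the odd-indexed four), and that iterating once more sweeps out all eight regions. In particular $\hat{f}^2(s) \supset \mathcal{P}_R$, which already exhibits an admissible word of length $2$ beginning and ending at $s$.

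To finish, I would apply $\hat{f}$ once more: since $\hat{f}^2(s)$ covers all eight regions, $\hat{f}^3(s)$ contains $\hat{f}(s')$ for any square part $s'$ lying in any of those regions, and by the same even/odd dichotomy the union of these images again covers all eight regions. Hence $\hat{f}^3(s) \supset \mathcal{P}_R \ni s$, giving a loop of length $3$ at $s$ as well. Since $\gcd(2,3) = 1$, the state $s$ has period $1$, and irreducibility (which holds for $R \geq 4$ by the previous lemma) propagates this to every state.

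I do not foresee a serious obstacle; the argument is essentially bookkeeping on top of what was already extracted for irreducibility. The only point worth checking carefully is that the square parts $s$ and $s'$ produced at each stage actually lie inside the radius-$R$ ball, and hence belong to $\mathcal{P}_R$ rather than being trimmed away. This is the same condition that forces $R \geq 4$ in the irreducibility lemma and is handled identically here.
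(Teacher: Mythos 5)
Your proof is correct in substance and follows the same overall skeleton as the paper's: use irreducibility to reduce aperiodicity to a single state, then exhibit returns at one convenient square part. The difference is in the finishing move. The paper simply chooses the square part to the left of $3$, namely $p=\{x+iy: x\in(2.5,3),\ y\in(0,.5)\}$, and observes that $\hat{f}(p)$ is the union of the even regions of Figure~\ref{8regions}, which contains $p$ itself; the self-containment $p\subset\hat{f}(p)$ gives admissible return words of \emph{every} length at once, so no gcd argument is needed. Your route via loops of lengths $2$ and $3$ works, but it quietly requires a bit more than the set-level statement that the second iterate sweeps all eight regions: for admissibility in $\Sigma_R$ the intermediate letter must be a single part of $\mathcal{P}_R$ whose image contains $s$, so you must route through a square part of the correct even/odd image parity lying within the radius-$R$ ball (the full sweep of the eight regions uses parts arbitrarily far out, which are trimmed from $\mathcal{P}_R$). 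You flag the radius issue yourself, and it is resolvable exactly as in the irreducibility lemma, so this is a matter of extra bookkeeping rather than a gap --- but note that the paper's choice of a state with a length-one self-loop sidesteps both the parity matching and the gcd step entirely.
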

\begin{proof}
Since we already established irreducibility, we only need to show that one state is aperiodic.  Take the square part to the left of $3$, i.e. $p = \{x+iy: x \in (2.5,3), y \in (0,.5)\}$.  Its image under $\hat{f}$ is the union of the even regions (refer to the previous figure) and hence $p \subset \hat{f}(p)$.  Therefore, the orbit of a point in $p$ may return to $p$ after any number of iterates of $\hat{f}$.
\end{proof}

\subsection{Properties of the Spectrum of $\mathcal{L}_s$}\

We are now in a position to cite Ruelle's Perron-Frobenius theorem (see \cite{parrypoll} for proof):
\begin{theorem}
\label{RPF}
For $s \in \mathbb{R}$ the spectrum of $\mathcal{L}_s$ has the following properties:
\begin{enumerate}
\item $\mathcal{L}_s$ has a simple maximal positive eigenvalue $\lambda_s$ with corresponding eigenfunction $h_s$ which can be chosen to be positive.
\item The remainder of the spectrum is contained in a disc of radius less than $\lambda_s$.
\item There is a unique probability measure $\mu_s$ on $\Sigma_R$ such that $\mathcal{L}_s^* \mu_s = \lambda_s \mu_s$.
\item $\frac{1}{\lambda_s^n} \mathcal{L}_s^n v \to h_s \int v d\mu_s$ uniformly for  all continuous $v$ if $h_s$ is normalized so that $\int h_s d \mu_s = 1$.
\end{enumerate}
\end{theorem}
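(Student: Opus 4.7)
The plan is to apply the classical Ruelle operator theory for topologically mixing subshifts of finite type with H\"{o}lder continuous potential. In our setting the potential is $s\tau$, which lies in $\mathcal{F}_\rho$ since $\tau = \log|f'|\circ \pi$ is H\"{o}lder ($|f'|\leq 1/2$ is smooth away from the grid, and $\pi$ contracts cylinders geometrically). The four conclusions are proved in sequence: quasi-compactness of $\mathcal{L}_s$ via a Lasota-Yorke inequality, existence of $\lambda_s$, $h_s$, $\mu_s$ via positivity and Schauder-Tychonoff, simplicity of $\lambda_s$ and spectral gap via topological mixing, and finally the convergence in (4) via the resulting spectral decomposition.

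First I would establish the Lasota-Yorke (Doeblin-Fortet) inequality. For $g \in \mathcal{F}_\rho$ and two sequences agreeing on the first $n$ coordinates, compare $\mathcal{L}_s^n g$ at the two points branch by branch. On each preimage branch the exponential weight contributes a bounded-distortion factor $1+O(\rho^n)$ (from H\"{o}lder continuity of $\tau$ and $|f'|\leq 1/2$), while the values of $g$ differ by $O(|g|_\rho \rho^{2n})$. This produces
\begin{equation*}
||\mathcal{L}_s^n g||_\rho \leq A\,\rho^n\, ||g||_\rho + B\, \lambda_s^n\, ||g||_\infty,
\end{equation*}
where $\lambda_s := \lim_n ||\mathcal{L}_s^n||^{1/n}$. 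Combined with Arzela-Ascoli, this yields quasi-compactness: the essential spectral radius of $\mathcal{L}_s$ on $\mathcal{F}_\rho$ is strictly smaller than $\lambda_s$. For the existence of the eigendata, apply Schauder-Tychonoff to the continuous self-map $\mu \mapsto \mathcal{L}_s^*\mu / (\mathcal{L}_s^*\mu)(\mathbf{1})$ on the weak-$*$ compact convex set of probability measures; this produces $\mu_s$ with $\mathcal{L}_s^*\mu_s = \lambda_s \mu_s$. Then $h_s$ arises as a precompact limit point of $\lambda_s^{-n}\mathcal{L}_s^n \mathbf{1}$ in $\mathcal{F}_\rho$ (precompactness from Lasota-Yorke), and is a strictly positive eigenfunction with eigenvalue $\lambda_s$; normalize so $\int h_s\, d\mu_s = 1$.

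The main obstacle is simplicity of $\lambda_s$ together with the absence of other peripheral spectrum, and it is here that the irreducibility and aperiodicity established in the two lemmas just above become essential. Passing to the normalized Markov operator $\tilde{\mathcal{L}}_s g := \lambda_s^{-1} h_s^{-1}\mathcal{L}_s(h_s g)$, which satisfies $\tilde{\mathcal{L}}_s \mathbf{1} = \mathbf{1}$, one reduces to showing that any eigenfunction $g$ of $\tilde{\mathcal{L}}_s$ with unimodular eigenvalue $e^{i\theta}$ must be constant and satisfy $\theta = 0$. Topological mixing, inherited from irreducibility and aperiodicity, provides for each pair of cylinders $C_a, C_b$ an $N$ such that $\tilde{\mathcal{L}}_s^n \mathbf{1}_{C_a}(x) > 0$ for every $x \in C_b$ and $n\geq N$; a Hopf-style maximum principle applied to $|g|$ via Jensen's inequality then forces $|g|$ constant and all phases aligned. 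Combined with quasi-compactness, this confines the rest of the spectrum to a disc of radius strictly less than $\lambda_s$.

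Finally, the spectral gap produces a Riesz decomposition $\mathcal{L}_s = \lambda_s \mathcal{Q} + \mathcal{R}$, where $\mathcal{Q} v = h_s \int v\, d\mu_s$ is the rank-one projector onto $\mathbb{C} h_s$ and $||\mathcal{R}^n||_\rho = O((\lambda_s - \eta)^n)$ for some $\eta > 0$. Dividing by $\lambda_s^n$ gives the asserted convergence uniformly for $v \in \mathcal{F}_\rho$; the extension to arbitrary continuous $v$ follows from density of $\mathcal{F}_\rho$ in $C(\Sigma_R)$ together with the uniform sup-norm bound on $\lambda_s^{-n}\mathcal{L}_s^n v$ implied by $\tilde{\mathcal{L}}_s \mathbf{1}=\mathbf{1}$ and the positivity of $h_s$.
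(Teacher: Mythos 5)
Your proposal is correct in substance, but it takes a different route from the paper: the paper does not prove Theorem~\ref{RPF} at all, it cites it as Ruelle's Perron--Frobenius theorem (with proof in \cite{parrypoll}), and the only content the paper itself supplies is the verification of the hypotheses --- the two preceding lemmas establishing irreducibility and aperiodicity of $(\Sigma_R,\sigma)$ for $R \geq 4$, together with the H\"older continuity of the weight coming from bounded distortion. You instead reconstruct the classical proof: a Lasota--Yorke inequality giving quasi-compactness, Schauder--Tychonoff for the eigenmeasure, a compactness argument for the eigenfunction, the normalized Markov operator plus mixing to kill peripheral spectrum, and the rank-one spectral projection for conclusion (4), with the extension to continuous $v$ by density and the sup-norm contraction of $\tilde{\mathcal{L}}_s$. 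This is essentially the argument in \cite{parrypoll}, so nothing is wrong with the strategy; what your route buys is self-containedness, while the paper's buys brevity and correctly isolates the only non-generic inputs (irreducibility, aperiodicity, H\"older potential), which you also identify as the crux. Two spots in your sketch would need slight care if written out: a weak limit point of $\lambda_s^{-n}\mathcal{L}_s^n\mathbf{1}$ is not automatically an eigenfunction --- one should pass to Ces\`aro averages or apply Schauder--Tychonoff to a suitable invariant cone of H\"older functions --- and the strict positivity of $h_s$ should be deduced explicitly from irreducibility (every point has preimages entering any fixed cylinder within a bounded number of steps), not just asserted.
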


Consider $\mathcal{L}_s$ as a family in $s \in \mathbb{R}$ and define the pressure functional as $P(s) = \log \lambda_s$.  The pressure is increasing in $s$ and there is a unique solution $s = \delta_R$ to $P(s) = 0$.  Since $\delta_R$ is featured in our main asymptotic, it will be necessary to determine the dependence of $\delta_R$ on $R$ in a later section.  Our next step is to consider the family $\mathcal{L}_s$ for $s \in \mathbb{C} - \mathbb{R}$.  

\begin{theorem}
\label{nonlattice}
For $s \in \mathbb{C}- \mathbb{R}$, the spectrum of $\mathcal{L}_s$ is  contained in a disc centered at zero of radius $\lambda_{\Re{s}}$ (the maximal eigenvalue of $\mathcal{L}_{\Re{s}}$).
\end{theorem}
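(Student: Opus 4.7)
The plan is a two-step argument: first a straightforward upper bound on the spectral radius of $\mathcal{L}_s$ coming from a pointwise majorant, and then a refinement ruling out peripheral eigenvalues of modulus $\lambda_{\Re(s)}$ by exploiting the equality case of the complex triangle inequality. The second step is where the dynamics enters: equality will force a cohomological identity that in turn forces a lattice relation on the length spectrum, which I will then break by producing closed geodesics whose lengths are rationally incommensurable.

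For the upper bound, the pointwise estimate
\begin{equation*}
|\mathcal{L}_s f(a)| \;\le\; \sum_{\sigma(b)=a} e^{\Re(s)\tau(b)}|f(b)| \;=\; \mathcal{L}_{\Re(s)}|f|(a)
\end{equation*}
iterates to $|\mathcal{L}_s^n f|\le \mathcal{L}_{\Re(s)}^n|f|$, which combined with Theorem~\ref{RPF}(4) gives spectral radius at most $\lambda_{\Re(s)}$. For the refinement, suppose $\mathcal{L}_s h = \mu h$ with $h\neq 0$ and $|\mu|=\lambda_{\Re(s)}$. Integrating the pointwise inequality $|\mu||h|\le \mathcal{L}_{\Re(s)}|h|$ against the Ruelle measure $\mu_{\Re(s)}$ of Theorem~\ref{RPF}(3) turns both sides into $\lambda_{\Re(s)}\int|h|\,d\mu_{\Re(s)}$, forcing $|\mu||h| = \mathcal{L}_{\Re(s)}|h|$ pointwise; simplicity of $\lambda_{\Re(s)}$ then yields $|h| = c\,h_{\Re(s)}$ for some $c>0$. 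Writing $h = c\,h_{\Re(s)} e^{i\psi}$ and $\mu = \lambda_{\Re(s)} e^{i\theta}$, the equality case of the triangle inequality applied to $\mu h(a) = \sum_{\sigma(b)=a} e^{s\tau(b)}h(b)$ requires every admissible transition $b\mapsto a$ to satisfy
\begin{equation*}
\Im(s)\tau(b) + \psi(b) \;\equiv\; \theta + \psi(a) \pmod{2\pi}.
\end{equation*}

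Summing this identity around a periodic orbit $\sigma^n(b_0)=b_0$ causes the $\psi$ contributions to telescope, yielding $\Im(s)\,S_n\tau(b_0)\equiv n\theta \pmod{2\pi}$. Since $-S_n\tau(b_0)$ equals the length $\ell$ of the associated closed geodesic, every symbolic period $n$ and length $\ell$ realized in $\Sigma_R$ satisfies $\Im(s)\ell + n\theta \in 2\pi\mathbb{Z}$. Taking the difference of two such relations for orbits of a common period $n$ eliminates $\theta$ and gives $\Im(s)(\ell-\ell')\in 2\pi\mathbb{Z}$; comparing with a second length difference $\ell''-\ell'''$ coming from two more orbits of a common period forces $(\ell-\ell')/(\ell''-\ell''')\in\mathbb{Q}$. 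Producing two such pairs of closed geodesics in $\Gamma_R$ with rationally incommensurable length differences therefore forces $\Im(s)=0$, contradicting $s\notin\mathbb{R}$.

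The main obstacle is this last non-lattice step. Each closed geodesic in $\Gamma_R$ corresponds to a primitive loxodromic $A\in\slt{\Zi}$ with length $\ell = 2\log\left|(\mtr{A} + \sqrt{\mtrs{A}-4})/2\right|$, so the question reduces to exhibiting four elements of $\slt{\Zi}$ realized by short admissible cycles in $\mathcal{P}_R$ whose Gaussian-integer traces generate eigenvalue logarithms with the desired $\mathbb{Q}$-linear independence. A convenient starting point is the fixed-point cycle identified in the aperiodicity lemma together with the simplest two-cycles through neighboring square parts; alternatively one may invoke the general fact that the length spectrum of a non-elementary Kleinian group is never contained in a single lattice $a\mathbb{Z}$, which follows at once from the existence inside $\slt{\Zi}$ of two loxodromic elements whose translation lengths are $\mathbb{Q}$-linearly independent.
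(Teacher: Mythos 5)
Your opening reduction is a legitimate alternative to the paper's route: instead of invoking Lalley's lattice/nonlattice dichotomy, you reprove its relevant direction directly via the pointwise majorant $|\mathcal{L}_s^n f|\le \mathcal{L}_{\Re{s}}^n|f|$ and the equality case of the triangle inequality, arriving at the periodic-orbit condition $\Im(s)\,S_n\tau(b_0)\equiv n\theta \pmod{2\pi}$. Two standard ingredients are elided but repairable: to pass from ``spectrum of modulus $\lambda_{\Re{s}}$'' to an actual eigenfunction $h\in\mathcal{F}_\rho$ you need the quasicompactness of $\mathcal{L}_s$ on $\mathcal{F}_\rho$ (a Lasota--Yorke bound making the essential spectral radius strictly smaller), and to upgrade the $\mu_{\Re{s}}$-a.e.\ equality to a pointwise one you need full support of $\mu_{\Re{s}}$, which holds by the irreducibility and aperiodicity established for $R\ge 4$. (Note also that the literal statement of the theorem, containment in the closed disc of radius $\lambda_{\Re{s}}$, already follows from your first inequality; the substantive content, for you and for the paper, is the strict bound off the real axis.)

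The genuine gap is the non-lattice input itself, which you defer at exactly the point where the paper does real work. The paper verifies it dynamically: the computation with $d_H(j,M.j)$ and $|m'(\alpha)|=e^{-l}$ shows the periodic Birkhoff sums of $\tau$ are (minus) geodesic lengths, so $\tau$ is cohomologous to the height function of a suspension realizing the restricted geodesic flow, and a lattice height would make that suspension non-mixing for every invariant measure, contradicting the mixing result of Rudolph cited there. Your two substitutes do not close this. The explicit-cycle computation (fixed-point cycle plus two-cycles through neighboring square parts) is only ``a convenient starting point''; nothing is computed, and this is the whole theorem. The ``alternative'' general fact is insufficient as stated: what must be contradicted is not that the lengths lie in a single lattice $a\mathbb{Z}$, but that each length $\ell$ of a symbolic period-$n$ orbit lies in the $n$-dependent coset $(2\pi\mathbb{Z}-n\theta)/\Im(s)$; if $\theta$ is an irrational multiple of $2\pi$ this union of cosets is dense in $\mathbb{R}$, so ``the length spectrum is not contained in $a\mathbb{Z}$'' rules nothing out --- you genuinely need pairs of orbits with controlled symbolic period (your own first plan) or the weak-mixing argument. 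Moreover the relevant lengths are only those of the low-lying geodesics encoded by $\Sigma_R$, so a statement about the full length spectrum of $\slt{\Zi}$ would not apply without showing the witnessing loxodromics are realized by admissible words in $\mathcal{P}_R$; and the claim that the fact ``follows at once from the existence of two loxodromic elements whose translation lengths are $\mathbb{Q}$-linearly independent'' merely asserts the existence that has to be proved. As written, the proposal reduces the theorem to the non-lattice property but does not establish it.
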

\begin{proof}
We summarize the proof provided by Lalley in section 11 of \cite{Lalley}.

There are two cases: either $\tau$ is lattice in which case the spectral radius of $\mathcal{L}_s$ is strictly smaller than the radius on the real axis or it is nonlattice in which case $\mathcal{L}_s$ has spectral radius equal to $\lambda_{\Re{s}}$ at regularly spaced intervals along the vertical line $\Re{z}=\Re{s}$.  See Lalley for the precise definition of a  lattice function.  In order to show that $\tau$ is non-lattice we relate it to a function known to be non-lattice.

 $\tau$ is cohomologous to $g$, a height function on suspension of restricted geodesic flow.

It suffices to show that for periodic $x \in \Sigma$ of period $n$, i.e. $\sigma^n(x)=x$, we have $S_n\tau(x)=S_n g(x)$.  If $k$ is the period of $x$, we will show that both $n$th Birkhoff sums give $n/k$ times the length of the closed geodesic associated with $x$: 
\begin{align*}
t^n(M) &= d_H(j,M.j) - d_H(j, \sigma(M).j) \\
&=d_H(j,M.j)-d_H(j,j) \\
&=d_H(j,ga^lg^{-1}.j) \\
&=d_H(g.j, ga^l.j) = l.
\end{align*}
On the other hand, $M$ also corresponds to a mobius function $m$.  Say $\alpha$ is the fixed point of $m$.
\begin{align*}
m'(\alpha) = \frac{1}{(c\alpha+d)^2} =\frac{1}{(c (e^{l/2} -d)/c +d)^2} = e^{-l}.
\end{align*}
So $\log |m'(\alpha)|=\tau^n(\alpha) = l$.

 If $g$ is lattice then the suspension flow is not mixing for any invariant measure, but \cite{rudolph} proves otherwise.
\end{proof}

\subsection{A Renewal Theorem for the Counting Function $N(a,X)$}\

Perturbation estimates ($|\mathcal{L}_s - \mathcal{L}_z|$ for $|s-z|<\epsilon$) imply that the eigenvalue map $s \mapsto \lambda_s$, the lead eigenfunction map $s \mapsto h_s$ and the invariant measure map $s \mapsto \mu_s$ are all holomorphic functions in a small neighborhood of $\delta_R$.  This leads to the local decomposition
$
(1-\mathcal{L}_s)^{-1} = \frac{h_s}{1-\lambda_s} \mu_s + (1-\mathcal{L}_s')^{-1}
$
where $(1-\mathcal{L}_s')^{-1}$ is a holomorphic family of bounded operators.  The first term in the local decomposition about $\delta_R$ contributes the main term in Lalley's estimate:

\begin{theorem} For any $a \in \Sigma_R$,
$$N(T,a) = h_{\delta_R}(a) e^{T\delta_R} + o(e^{T\delta_R}).$$
\end{theorem}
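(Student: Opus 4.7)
The plan is to follow the classical renewal-theorem strategy: Laplace-transform the renewal equation, identify the resulting transform with the resolvent of the transfer operator $\mathcal{L}_s$, locate its singularities via the spectral theory just established, and extract the pointwise asymptotic through a Tauberian theorem.

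Concretely, I would introduce the Laplace--Stieltjes transform
\[
\widehat{N}(s,a)\;=\;\int_0^{\infty} e^{-sT}\,dN(T,a).
\]
Since $N(\cdot,a)$ is non-decreasing and the bound $|f'|\le 1/2$ guarantees that the sums defining $N(T,a)$ are locally finite, this integral converges absolutely for $\Re(s)>\delta_R$. Applying the transform to the renewal equation, and using the standard change of variable that turns the shift by $\tau(b)$ into a weight $e^{s\tau(b)}$, produces the operator identity
\[
\widehat{N}(s,\cdot)\;=\;g\;+\;\mathcal{L}_s\,\widehat{N}(s,\cdot),
\qquad\text{hence}\qquad
\widehat{N}(s,\cdot)\;=\;(1-\mathcal{L}_s)^{-1}\,g.
\]

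Next I would extract the leading singularity at $s=\delta_R$. Theorem~\ref{RPF}, combined with standard perturbation theory for a simple isolated eigenvalue, yields the local decomposition
\[
(1-\mathcal{L}_s)^{-1}\;=\;\frac{h_s\otimes\mu_s}{1-\lambda_s}\;+\;\Phi(s),
\]
with $\Phi$ holomorphic in a neighbourhood of $\delta_R$. Because $\lambda_{\delta_R}=1$ and the pressure $P(s)=\log\lambda_s$ is strictly increasing (so $\lambda'_{\delta_R}\neq 0$), the first term contributes a simple pole at $\delta_R$ whose residue, acting on the initial data $g$, is a scalar multiple of $h_{\delta_R}(a)$; under the normalisation $\int h_{\delta_R}\,d\mu_{\delta_R}=1$ from Theorem~\ref{RPF}, this scalar is precisely what is needed for the coefficient in the statement. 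Theorem~\ref{nonlattice} then supplies the essential global input: on the vertical line $\Re(s)=\delta_R$, for $s\neq\delta_R$, the spectral radius of $\mathcal{L}_s$ is strictly less than $1$, so $1-\mathcal{L}_s$ is invertible and $\widehat{N}(s,\cdot)$ extends continuously to the closed half-plane $\Re(s)\ge\delta_R$ apart from this lone simple pole.

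A Wiener--Ikehara-type Tauberian theorem---applicable because $N(\cdot,a)$ is non-decreasing and $\widehat{N}(s,a)$ has only a simple pole on the boundary of its half-plane of convergence---then upgrades this boundary behaviour to the pointwise asymptotic
\[
N(T,a)\;=\;h_{\delta_R}(a)\,e^{T\delta_R}\;+\;o\!\bigl(e^{T\delta_R}\bigr).
\]
The main obstacle is the non-lattice step underlying the local-to-global passage in the previous paragraph: one must verify that the spectral radius of $\mathcal{L}_s$ genuinely drops strictly below $1$ at every $s$ on the critical line other than $\delta_R$, and this is exactly what Theorem~\ref{nonlattice} guarantees via the cohomological identification of $\tau$ with a non-lattice height function on the geodesic suspension. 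Obtaining an effective power-saving error term in place of the soft $o(\cdot)$ above would additionally require Dolgopyat-type polynomial bounds on $\|(1-\mathcal{L}_{\delta_R+it})^{-1}\|$ as $|t|\to\infty$, but these are not needed for the qualitative renewal statement proved here.
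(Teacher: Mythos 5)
Your proposal follows the same skeleton as the paper's (i.e.\ Lalley's) argument: Laplace-transform the renewal equation, obtain $\widehat{N}(s,\cdot)=(1-\mathcal{L}_s)^{-1}g$, isolate the simple pole at $s=\delta_R$ from the spectral decomposition of Theorem~\ref{RPF} together with $P'(\delta_R)\neq 0$, and use the non-lattice input of Theorem~\ref{nonlattice} to rule out further singularities on the line $\Re(s)=\delta_R$. The only genuine divergence is the final extraction step: the paper integrates a smoothed version of the transform along a vertical line and pulls the contour to $\Re(s)=\delta_R$, letting the pole produce the main term, whereas you appeal to monotonicity of $N(\cdot,a)$ and a Wiener--Ikehara-type Tauberian theorem. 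Both are legitimate; your route trades the explicit smoothing kernel for the hypothesis that $\widehat{N}(s,a)$ minus its polar part extends continuously to the closed half-plane $\Re(s)\ge\delta_R$, which indeed follows from the strict spectral-radius drop off the real axis plus continuity of $s\mapsto(1-\mathcal{L}_s)^{-1}$ there --- and you correctly identify this non-lattice step as the crux (note that the literal statement of Theorem~\ref{nonlattice} in the paper only asserts containment in the disc of radius $\lambda_{\Re(s)}$; what is actually used, and what you use, is the strict inequality for $s\notin\mathbb{R}$ in the non-lattice case). The contour-shift version has the advantage of generalizing to the congruence setting later in the paper, where quantitative bounds on the resolvent along the line are needed; for the present qualitative statement the two are interchangeable. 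Two small points you gloss over, as does the paper: absolute convergence of the transform for $\Re(s)>\delta_R$ requires the a priori bound $N(T,a)\ll_\epsilon e^{(\delta_R+\epsilon)T}$ coming from the pressure/eigenvalue bound, not just local finiteness; and the residue computation producing exactly $h_{\delta_R}(a)$ absorbs the factors from $(1-\lambda_s)^{-1}$ and the normalization $\int h_{\delta_R}\,d\mu_{\delta_R}=1$, which should be carried out explicitly if one wants the stated coefficient rather than merely some $C(a)>0$.
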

\begin{proof}
We summarize the proof, ignoring issues of convergence which are addressed in section of $8$ of Lalley.  First, define the following Laplace transform of the renewal function
$$
F(s,a) := \int_{-\infty}^\infty e^{Ts} N(T,a) dT
$$
and input the renewal equation to get
\begin{align*}
F(s,a) &= \int_{0}^\infty g(a)e^{Ts} dT +   \sum_{b: \sigma(b)=b} \int_{-\infty}^\infty e^{Ts} N(T-\tau(b),b) dT \\
&= \frac{g(a)}{1-e^s} +  \sum_{b: \sigma(b)=b} \int_{-\infty}^\infty e^{Ts}e^{-s\tau(b)} N(T,b) dT \\
&= \frac{g(a)}{1-e^s} + \mathcal{L}_s F(s,a).
\end{align*}
So $I - \mathcal{L}_s$ applied to the Laplace transform of the renewal function gives $g(a) (1-e^s)^{-1}$.  Now it is clear how information about the spectrum of $\mathcal{L}_s$ yields information about the renewal function.  In particular, where the resolvent exists we have
$$
(1-e^s)^{-1} (I-\mathcal{L}_s)^{-1} g(a)=  F(s,a).
$$
Theorems \ref{RPF} and \ref{nonlattice} and the decomposition of $(1-\mathcal{L}_s)^{-1}$ in a neighborhood of $\delta_R$ imply
$$
F(s,a) = \frac{C(a)}{z} + G(s,a)
$$
where $G(s,a)$ is holomorphic in $\Re(s) \geq \delta_R$.  Integrating a smoothed version of $F(s,a)$ along a vertical line $\Re{s}>\delta_R$ and pulling the contour to $\delta_R$ we get that the pole in $F(s,a)$ contributes the main term
$$
e^{-T\delta} N(T,a) \to C(a)
$$
as $T \to \infty$.
\end{proof}

\subsection{An Asymptotic for Closed Geodesics} \
\label{finren}

Finally, we relate dynamics on $(\Sigma_R, \sigma)$ to a geodesic count.  Define the set of finite admissible words (plus the empty word $\emptyset$) as
$$
\Sigma^* = \{\emptyset\} \cup \{(p_1, \ldots, p_n) : p_i \leftrightarrow \rho_i \in \mathcal{P}_R, f(\rho_i) \supset \rho_{i+1} \}.
$$
There is a bijection between aperiodic words in $\Sigma^*$ and closed geodesics.  Specifically, to a closed geodesic we associate the element in $\Sigma_R^*$ that corresponds to the period of an orbit under $\sigma$.  A periodic word in $\Sigma^*$ corresponds to the a geodesic traversed multiple times.  We define $\tau^*(a) = d(j, g_a j) - d(j, g_{\sigma(a)} j)$ where $d(P, P)$ denotes hyperbolic distance between any two points in $\mathbb{H}^3$ and $g_a \in \slt{\Zi}$ is the local definition of $f$ restricted to $C_a$.  The identity $2\cosh d(j, g.j) = ||g||^2$ (see \cite{EGM} for a  proof specific to $\mathbb{H}^3$) will eventually lead us to the final counts for matrices in a norm ball.  The shift operator extends in a natural way to act on $\Sigma^*$ however we need to resolve the ambiguity of $\sigma^k$ for words of length less than $k$:
\begin{align*}
\sigma(p_1, p_2, \ldots, p_n) &= (p_2, \ldots p_n) \\
\sigma(p_i) &= \emptyset \\
\sigma(\emptyset) &= \emptyset.
\end{align*}
We are now ready to define the finite version of the renewal function
$$
N^*(T,a) := \sum_{k=0}^\infty \sum_{\substack{b: \sigma^k(b)=a \\ b\neq \emptyset}} g(y) \ind{S_n \tau^*(b) \leq T}.
$$
$N^*(T,a)$ satisfies a renewal equation similar to $N(T,a)$ and so we are tempted to treat the finite renewal function analogously.  However, finding an appropriate Banach space of functions for the transfer operators $\mathcal{L}_s^*$ to act on is elusive.   

In order to model $\Sigma_R^*$ after $\Sigma_R$ we introduce a new state $0$ and for any $(p_1, \ldots, p_k) \in \Sigma_R^*$ append an infinite tail of $0$'s to achieve an infinite word.  The empty word maps to the infinite string of zeros and the action of $\sigma$ is well-defined between the finite model of $\Sigma^*$ and the infinite one.  The space $\mathcal{F}_\rho(\Sigma_R^* \cup \Sigma_R)$ of H\"{o}lder continuous functions satisfies the same properties (with the same norm) as previously.  However, the addition of the new `0'-state means that the system $(\Sigma_R^*\cup \Sigma_R, \sigma)$ is no longer irreducible and hence Ruelle's Perron-Frobenius theorem does not immediately apply to the spectrum of the transfer operators defined on $\mathcal{F}_\rho(\Sigma_R^* \cup \Sigma_R)$.  Since we are only after an asymptotic for $N^*(T,a)$ and long words in $\Sigma_R^*$ may be approximated reasonably well (in the product topology) by words in $\Sigma_R$, we can use what we have already shown about the transfer operators on $\mathcal{F}_\rho(\Sigma_R)$ to prove the following:
\begin{theorem} 
If $h_{s}^*$ is the leading eigenfunction of $\mathcal{L}_s^*$ then
$$N^*(T,a) = h_{\delta_R}^*(a) e^{T\delta_R} + o(e^{T\delta_R}) .$$
\end{theorem}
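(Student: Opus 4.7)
The plan is to mimic the proof of the earlier renewal theorem for $N(T,a)$, carefully handling the reducibility issue introduced by the absorbing $0$-state. First I would derive for $N^*(T,a)$ a renewal equation
$$ N^*(T,a) = g(a) \ind{0 \leq T} + \sum_{\substack{b:\sigma(b)=a \\ b \neq \emptyset}} N^*(T-\tau^*(b), b), $$
exactly analogous to the one for $N(T,a)$. Taking the Laplace transform yields $(I - \mathcal{L}_s^*) F^*(s,a) = g(a)/(1-e^s)$, and all that remains is to carry out the spectral analysis of $\mathcal{L}_s^*$ on $\mathcal{F}_\rho(\Sigma_R^* \cup \Sigma_R)$ and then pull the Laplace contour past the leading pole at $s = \delta_R$.

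Next I would analyze $\mathcal{L}_s^*$ via the block structure imposed by the absorbing $0$-state. Decompose $\mathcal{F}_\rho(\Sigma_R^* \cup \Sigma_R)$ according to whether a sequence lies entirely in $\mathcal{P}_R$ (an infinite $\Sigma_R$-word) or consists of a finite prefix followed by the $0$-tail. Preimages under $\sigma$ of a sequence in $\Sigma_R$ stay in $\Sigma_R$, so restricting $\mathcal{L}_s^*$ to functions supported on $\Sigma_R$ recovers exactly $\mathcal{L}_s$. On the complementary block, the preimage of a word ending in the $0$-tail is again such a word with one extra letter on the left, while the all-zeros fixed point contributes only through itself. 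This produces an upper-triangular block decomposition whose diagonal blocks are $\mathcal{L}_s$ together with a scalar action at the $0$-state. Applying Ruelle-Perron-Frobenius (Theorem \ref{RPF}) to the $\mathcal{L}_s$-block, the maximal positive eigenvalue $\lambda_s$ persists for $\mathcal{L}_s^*$ with a leading eigenfunction $h_s^*$ extending $h_s$; the rest of the spectrum stays strictly inside a disc of radius smaller than $\lambda_s$, and Theorem \ref{nonlattice} still rules out spectral values on the vertical line $\Re s = \delta_R$ other than $\delta_R$ itself.

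With this in hand, the local decomposition
$$(I - \mathcal{L}_s^*)^{-1} = \frac{h_s^* \, \mu_s^*}{1-\lambda_s} + (I - (\mathcal{L}_s^*)')^{-1} $$
holds in a neighbourhood of $\delta_R$ with holomorphic remainder, so $F^*(s,a)$ inherits a simple pole at $s = \delta_R$ with residue proportional to $h_{\delta_R}^*(a)$. The same contour-shifting argument as in Lalley's proof of the renewal theorem for $N(T,a)$---smooth $F^*(s,a)$ against a Paley-Wiener kernel, integrate along a vertical line $\Re s > \delta_R$, and pull the contour down to $\Re s = \delta_R$ picking up only the simple pole---then gives $e^{-T\delta_R} N^*(T,a) \to h_{\delta_R}^*(a)$, which is the claimed asymptotic.

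The hard part is the spectral persistence in the second paragraph: one must verify that adjoining the $0$-state introduces no new spectral values at or near the circle $|z|=\lambda_s$, despite the loss of irreducibility. The upper-triangular block structure reduces this to controlling the off-diagonal coupling between the $\Sigma_R$-block and the finite-word block, and the key input is that long finite admissible words in $\Sigma_R^*$ approximate genuine $\Sigma_R$-words in the product topology, so H\"older functions on the extended space are well-approximated by H\"older functions pulled back from $\Sigma_R$. Combined with the quasi-compactness already established for $\mathcal{L}_s$, this approximation is what lets one import the conclusions of Theorems \ref{RPF} and \ref{nonlattice} without losing the spectral gap needed for the contour pull.
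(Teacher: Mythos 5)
Your route is genuinely different from the paper's, and as written it has a real gap at its crux. The paper deliberately does \emph{not} carry out spectral analysis of $\mathcal{L}_s^*$ on $\mathcal{F}_\rho(\Sigma_R^*\cup\Sigma_R)$: it notes that an appropriate Banach space for $\mathcal{L}_s^*$ is elusive and that Ruelle--Perron--Frobenius fails for the reducible extended system, and instead proves the asymptotic by a comparison argument. Concretely, it first shows that for $a\in\Sigma_R^*$ and $b\in\Sigma_R$ in the same $N$-cylinder one has $S_k\tau^*(a)=S_k\tau(b)+O(2^{-N+k})$ (proved by passing to the periodic word $\overline{a}$ with period $a_1,\ldots,a_N$, using the geodesic-length identity from the proof of Theorem~\ref{nonlattice}, and then bounded distortion, Lemma~\ref{bd}); this yields the sandwich $N(T-O(2^{-N+k}),b)\le N^*(T,a)\le N(T+O(2^{-N+k}),b)$, and the theorem follows by iterating the renewal equation $N$ times, applying the already-proved asymptotic for $N(T,\cdot)$ to each summand, and letting $N\to\infty$ using continuity of $h$. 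Your proposal replaces all of this by a claim of ``spectral persistence'' for $\mathcal{L}_s^*$, which is precisely the step the paper found problematic and which you assert rather than prove.

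Two specific problems. First, the block decomposition you describe does not exist as a direct sum: in the product topology every infinite word of $\Sigma_R$ is a limit of its finite truncations, so $\Sigma_R$ lies in the closure of $\Sigma_R^*$ and the only continuous function supported on the finite-word ``block'' is zero. What one actually has is the invariant subspace of H\"older functions vanishing on $\Sigma_R$, with quotient the $\Sigma_R$-system; to conclude that the leading eigenvalue and the gap persist you must prove a spectral-radius bound (Lasota--Yorke type estimates in both $\|\cdot\|_\infty$ and $|\cdot|_\rho$) for $\mathcal{L}_s^*$ restricted to that infinite-dimensional subspace, not a ``scalar action at the $0$-state''; none of this is attempted. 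Second, even to define $\mathcal{L}_s^*$ as a bounded operator on $\mathcal{F}_\rho(\Sigma_R^*\cup\Sigma_R)$ and to identify the quotient block with $\mathcal{L}_s$ (same weight), you need the quantitative comparison between $\tau^*$ on long finite words and $\tau$ on the infinite words they approximate --- exactly the Birkhoff-sum estimate $S_k\tau^*(a)=S_k\tau(b)+O(2^{-N+k})$ that is the analytic heart of the paper's proof. Your final paragraph gestures at ``H\"older functions being well-approximated,'' but the issue is the weight $e^{s\tau^*}$ itself, and without the periodic-word/bounded-distortion argument (or an equivalent) your $\mathcal{L}_s^*$ need not even act H\"older-continuously, so the residue computation producing $h^*_{\delta_R}(a)$ is unsupported. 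If you supply that lemma, the simplest way to finish is the paper's sandwich-and-iterate argument, which avoids the extended-space spectral theory altogether.
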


\begin{proof}
Our first claim is that for $a \in \Sigma_R^*$ and $b \in \Sigma_R$ which are in the same $N$-cylinder in $\Sigma_R^* \cup \Sigma_R$ (i.e. $a_i = b_i$ for $1 \leq i \leq N$) and for $k \ll N$
$$
S_k \tau^*(a) = S_k \tau(b) + O(2^{-N+k})
$$
Let $\overline{a} \in \Sigma_R$ be the periodic word with period $a_1, \ldots a_N$.  We claim that $S_k \tau^*(a) = S_k \tau(\overline{a})$.  Recall from the proof of Theorem~\ref{nonlattice} that for any geodesic with period $c\in \Sigma_R^*$, $S_l \tau^*(c) = S_l \tau(\overline{c}$ where $l$ is the length of $c$.  We can also write
\begin{align*}
S_k \tau^*(a) &= S_N \tau^*(a) - S_{N-k}\tau^*(\sigma^k(a)) \\
&= S_N \tau(\overline{a}) - S_{N-k} \tau(\overline{\sigma^k(a)}) \\
&= \left( S_k \tau(\overline{a}) + S_{N-k} \tau(\overline{\sigma^k(a)}) \right) - S_{N-k} \tau(\overline{\sigma^k(a)})  \\
&= S_k \tau(\overline{a})
\end{align*}
On the other hand, by bounded distortion (see Lemma~\ref{bd} which is more easily understood with the notation in the next section) $S_k \tau(b) = S_k \tau(\overline{a}) + O^{2^{-N+k}}$.  This proves the first claim.

From the previous claim it follows that for $a \in \Sigma_R^*$ and $b \in \Sigma_R$ which are in the same $N$-cylinder and $k \ll N$
$$
N(T-2^{-N+k}, b) \leq N^*(T, a) \leq N(T+2^{-N+k}, b)
$$
for $g \equiv 1$. 

Then iterating the renewal equation $N$ times, one has
\begin{align*}
N^*(T,a) = &\sum_{\substack{b' : \sigma^N(b')=a \\ \sigma^{N-1}(b') \neq \emptyset}} N^*(T-S_N\tau^*(b'), b') \\
&+\sum_{k=1}^{N-1} \sum_{\substack{b : \sigma^k(b)=a \\ \sigma^{N-1}(b) \neq \emptyset}} g(b) \ind{S_N \tau^*(b) \leq T} + g(a)\ind{T \geq 0}.
\end{align*}
As $T \to \infty$ second line does not change.   For each summand in the first line we can find $b' \in \Sigma_R$ to sandwich between the two terms
$$
N(T \pm 2^{-N+k} -S_N \tau^*(b'), b) \asymp h(b) e^{(T-S_N\tau^*(b'))\delta_R} e^{\pm \delta_R 2^{-N+k}}
$$
Send $N \to \infty$ and use the continuity of $h$ to get the statement of the theorem.
\end{proof}

 If $h_s$ is the leading eigenfunction for $\mathcal{L}_s$ the transfer operator in $(\Sigma_R, \sigma)$ then the leading eigenfunction $h^*_s$ of $\mathcal{L}^*_s$ agrees with $h_s$ on $\Sigma_R$.  This is Lemma 6.1 of Lalley.

The theorem follows from a sandwiching argument of the renewal function $N^*(T,x)$ between $N(T,x)$ with appropriate parameters.  A similar argument will appear in the next section.

Combining the asymptotic for $N^*(\sqrt{2 \cosh T},\emptyset)$ with the identity $2\cosh d(j, gj) = ||g||^2$ yields Theorem~\ref{normball} which we restate here:
\begin{theorem}
For fixed $R \geq 3$, there is a $\delta_R \in (0,2)$ so that
$$\# ( \Gamma_R \cap B_X ) \asymp X^{2\delta_R}$$
as $X \to \infty$.
\end{theorem}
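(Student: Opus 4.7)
The plan is to deduce Theorem~\ref{normball} directly from the renewal asymptotic $N^*(T,\emptyset) = h^*_{\delta_R}(\emptyset) e^{T\delta_R} + o(e^{T\delta_R})$ just established, by converting the hyperbolic displacement $d(j, g\cdot j)$ to the Frobenius norm $\|g\|$. First I would record the bijection between nonempty admissible finite words $b \in \Sigma_R^*$ and matrices $g_b \in \Gamma_R$ obtained by composing the local inverse branches of $f$ along $b$. Since $\tau^*(a) = d(j, g_a \cdot j) - d(j, g_{\sigma(a)} \cdot j)$ telescopes and $g_\emptyset = \mathrm{id}$, the Birkhoff sum satisfies
\[
S_k \tau^*(b) \;=\; d(j,\, g_b \cdot j)
\]
whenever $\sigma^k(b) = \emptyset$. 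With the weight $g \equiv 1$, this identifies
\[
N^*(T, \emptyset) \;=\; \#\{ b \in \Sigma_R^* : d(j, g_b \cdot j) \leq T \}.
\]

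Next, the identity $2\cosh d(j, g\cdot j) = \|g\|^2$ from \cite{EGM} rewrites the condition $g \in B_X$ as $d(j, g\cdot j) \leq \operatorname{arccosh}(X^2/2) = 2\log X + O(X^{-2})$. Setting $T_X := 2\log X + O(X^{-2})$ and invoking the renewal theorem gives
\[
\#(\Gamma_R \cap B_X) \;=\; N^*(T_X, \emptyset) \;=\; h^*_{\delta_R}(\emptyset)\, X^{2\delta_R} + o\bigl(X^{2\delta_R}\bigr),
\]
so that $\#(\Gamma_R \cap B_X) \asymp X^{2\delta_R}$, with positivity of the implicit constant guaranteed by the positivity of the leading eigenfunction in Theorem~\ref{RPF}.

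The only substantive point I expect to require care is justifying the strict bounds $\delta_R \in (0,2)$. The lower bound is immediate: for $R$ in the range for which $(\Sigma_R,\sigma)$ is irreducible and aperiodic, the incidence matrix has leading eigenvalue $\lambda_0 > 1$, so $P(0) > 0$ and the unique zero $\delta_R$ of the (strictly decreasing in the relevant direction) pressure functional satisfies $\delta_R > 0$. The upper bound $\delta_R < 2$ is the main obstacle, and would follow from showing $P(2) < 0$. I would argue this by comparing $\mathcal{L}_2$ acting on $\mathcal{F}_\rho(\Sigma_R)$ to the analogous transfer operator associated with the full Picard system, whose critical exponent is exactly $2$ because $\slt{\Zi}$ is a lattice and its limit set is all of $\partial \mathbb{H}^3 = \mathbb{C}$; strict monotonicity of the Perron eigenvalue under removal of the (nonzero) contributions corresponding to parts of $\mathcal{P}$ excluded from $\mathcal{P}_R$ then forces $\lambda_2 < 1$, i.e. $\delta_R < 2$.
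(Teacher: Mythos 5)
Your proposal is correct and follows essentially the paper's own route: the paper likewise derives Theorem~\ref{normball} by combining the finite renewal asymptotic for $N^*(T,\emptyset)$ (with weight $g\equiv 1$, so that $S_k\tau^*$ telescopes to $d(j,g_b\cdot j)$) with the identity $2\cosh d(j,g\cdot j)=\|g\|^2$, exactly as you do. Your additional justification that $\delta_R\in(0,2)$ --- $P(0)>0$ from irreducibility/aperiodicity, and $\lambda_{2,R}<\lambda_{2,\infty}=1$ by strict monotonicity of the Perron eigenvalue against the infinite-alphabet operator --- is a sound supplement to a point the paper leaves implicit, and it is consistent with the machinery of the chapter on the growth parameter $\delta_R$.
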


\section{Counting Geodesics with Congruence Conditions}
Here we combine the work of Bourgain, Gamburd, Sarnak in \cite{BGS} with the expansion idea of Bourgain, Kontorovich, Magee in \cite{BKM} to estimate
$$
\#\{ g \in \Gamma_R\cap B_X : g \equiv \omega \bmod q \} 
$$
for some $\omega \in \slt{q}$ (recall that $\slt{q}:= \slt{\Zi}/(q)$).

In order to detect congruence classes we introduce a new space of functions $\mathcal{F}_\rho(\Sigma_R \times \slt{q})$.  If $f$, defined on $\Sigma_R \times \slt{q}$, is continuous in each variable, we can define
\begin{align*}
    ||f||_\infty &:= \sup_x \left( \sum_{g \in \slt{q}} |f(x,g)|^2 \right)^{1/2}, \\
    \text{var}_n f &:= \sup \left\{ \left(\sum_{g \in \slt{q}} |f(a,g)-f(b,g)|^2 \right)^{1/2} : a_i = b_i \text{ for all } 0 \leq i \leq n\right\}, \\
    |f|_\rho &:= \sup_n \frac{\text{var}_n f}{\rho^n}.
\end{align*}
Then $\mathcal{F}_\rho(\Sigma_R \times \slt{q}) = \{f \in C(\Sigma_R \times \slt{q}): ||f||\infty<\infty, |f|_\rho<\infty\}$ with norm $||\cdot||_\rho = ||\cdot||_\infty + |\cdot|_\rho$ is a Banach space.

Before defining the action of the transfer operators on this space, we must explain how the action of $(\Sigma_R, \sigma)$ extends to $\slt{q}$.  Recall the definition of $\hat{f}$ in $(\mathcal{X}, \hat{f}):$
\begin{align*}
    \hat{f}(z) = 
    \begin{cases} \frac{-1}{z - \lfloor z \rceil} & \Im(z)>\Im(\lfloor z \rceil) 
    \\ \frac{1}{z - \lfloor z \rceil} & \Im(z)<\Im(\lfloor z \rceil) \end{cases}.
\end{align*}
Locally, i.e. when restricting to the interior of a part $p \in \mathcal{P}$, we may represent the action of $\hat{f}$ as a fractional linear transformation:
$$
\hat{f}|_p = \begin{pmatrix} 0 & -1 \\ 1 & 0 \end{pmatrix} \begin{pmatrix} -i & 0 \\ 0 & i \end{pmatrix}^j \begin{pmatrix} 1 & - \lfloor z \rceil \\ 0 & 1 \end{pmatrix}
$$
where $j \in \{0, 1\}$ reflects whether rotation is necessary for the image to be in the upper half-plane.  Moreover, the inverse of $\hat{f}|_p$ is well-defined and represented by a fractional linear transformation in $\slt{\Zi}$.  

We now introduce some notation and operations on $\Sigma_R$ in order to describe the preimages of an element under $\sigma$.  Let
$$
\Gamma^n := \{ (p_1, \ldots, p_n) : p_i \leftrightarrow \rho_i \in \mathcal{P}_R, f(\rho_i) \supset \rho_{i+1} \}
$$
be the set of admissible words of length $n$. \label{conc} We denote concatenation of two finite words with $\Vert$, i.e. for $a=(a_1, \ldots, a_n) \in \Gamma^n$ and $b=(b_1,\ldots, b_k) \in \Gamma^k$,
$$
a \Vert b = (a_1, \ldots, a_n, b_1, \ldots, b_k)
$$
If $a_n$ and $b_1$ satisfy the subshift rules, then we say the concatenation $a \Vert b \in \Gamma^{n+k}$ is an admissible one.  Concatenations of the form $a \Vert b$ are also well-defined for $b \in \Sigma_R$ as long as $a$ is a finite word.  In order to describe finite words which give admissible concatenations we set
\begin{align*}
\adm{x}{y}{n} := \{a \in \Gamma^n : x \Vert a \Vert y \text{ is admissible}\},\\
\Gamma_y^n:= \{a \in \Gamma^n :  a \Vert y \text{ is admissible}\}.
\end{align*}
While $y$ may be an infinite word, we must have $x$ a finite word for the definition above.  

For $a \in \Sigma_R$, we now write $\sigma^{-1}(a) = \{ b \Vert a : b \in \Gamma^1_a \}$ and for each $b \vert a \in \sigma^{-1}(a)$ denote the inverse branch of $f$ at $b \Vert a$ as $g_b$.  In other words, $g_b \in \slt{\Zi}$ satisfies $g_b(\pi(a)) = \pi(b\Vert a)$ and $f(b\Vert a) = \pi^{-1}\circ g_b^{-1} \circ \pi(b \Vert a) = a.$

We are now ready to describe the congruence transfer operators: \label{to}
$$
M_z f(x,g) = \sum_{a \in \Gamma_{1,x}} e^{z\tau(a \Vert x)} f(a \Vert x; g_a g).
$$

\subsection{Bounding $M_z$ in the Supremum Norm}\

We will exhibit cancellation in the iterates of the transfer operator
$$
M^N_z f(x,g) = \sum_{a \in \Gamma^N_x} e^{zS_N\tau^( a \Vert x)} f(a \Vert x; g_a g)
$$
by treating the prefix and suffix of $a \in \Gamma^N_{x}$ separately.  Let $N = M+R$ and for any $b \in \Gamma^M$ define \label{mub}
$$
\mu_b := \sum_{a \in \adm{b}{x}{R}} e^{z S_N\tau(b\Vert a \Vert x)} \delta_{\pi_q(g_a)} = \sum_{a \in \adm{b}{x}{R}} e^{zS_M\tau(b\Vert a \Vert x)} e^{zS_R\tau(a\Vert x)}\delta_{\pi_q(g_a)}
$$
where $\pi_q: \slt{\Zi} \to \slt{q}$.  In order to decouple $M$ and $R$, we need a lemma which reassures us that the value of $e^{zS_M\tau(b \Vert a \Vert x)}$ does not change much for fixed $b$ and varying $a$.  We first establish a property of the system $(S(\mathcal{X})_R, f)$ called {\it bounded distortion.}

\begin{lemma}
\label{bd}
For fixed $R$ there is some $C=C(R)$ so that for any $p \in \mathcal{P}_R$
$$
\sup_{z \in p} \left|\frac{f''}{f'}(z)\right| < C.
$$
\end{lemma}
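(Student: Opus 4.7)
The plan is to exploit that on each part $\rho \in S(\mathcal{P}_R)$, $f$ acts as a single fractional linear transformation, namely the composition $S \circ \hat{f}|_p \circ S$, where $p = S(\rho)$ and the local matrix presentation of $\hat{f}|_p$ has already been given explicitly as the product of a translation by the Gaussian integer $n = \round{w}$, possibly a rotation, and the inversion $w \mapsto -1/w$. A short matrix multiplication sandwiching this with $S = \begin{pmatrix} 0 & -1 \\ 1 & 0 \end{pmatrix}$ on both sides gives $f|_\rho(z) = \pm (nz+1)/z$, a Möbius map whose only pole is at $z=0$.

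For a normalized Möbius map $(az+b)/(cz+d)$ with $ad-bc=1$, one has $f'(z) = 1/(cz+d)^2$ and $f''(z) = -2c/(cz+d)^3$, so
\begin{equation*}
\frac{f''(z)}{f'(z)} = \frac{-2c}{cz+d} = \frac{-2}{z},
\end{equation*}
the last equality because $(c,d) = (\pm 1, 0)$ in the local expression for $f|_\rho$. Uniformly bounding $|f''/f'|$ on $\rho$ thus reduces to uniformly bounding $1/|z|$ from above on $\rho$.

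This final step is precisely where the truncation to $\mathcal{P}_R$ enters. By definition, $p \in \mathcal{P}_R$ is contained in the closed ball of radius $R$ about the origin, so every $z \in \rho = S(p)$ satisfies $|z| = 1/|S(z)| \geq 1/R$, giving $|f''(z)/f'(z)| \leq 2R$. Since this bound is uniform across all $\rho \in S(\mathcal{P}_R)$, the lemma holds with, say, $C(R) = 2R$.

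There is no substantive obstacle: the argument is entirely algebraic once $\hat{f}|_p$ is unpacked as a product of Möbius matrices, and the only analytic input is that the pole of $f|_\rho$ is uniformly separated from $\rho$—a separation enforced by the very definition of $\mathcal{P}_R$, which is what would fail for unrestricted parts approaching the cusp.
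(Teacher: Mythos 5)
Your proof is correct and follows essentially the same route as the paper: both unpack the local Möbius expression of $f$ on a part (whose only pole is at $z=0$), observe that $f''/f'$ reduces to $-2/z$, and bound $1/|z|$ using the fact that the radius-$R$ truncation keeps the $S$-images of the parts uniformly away from the origin. The only cosmetic difference is that you pass through the explicit matrix factorization while the paper differentiates the formula $f(z)=(-1)^j(1/z-\round{1/z})$ directly.
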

\begin{proof}
From the definition
$$
f(z) = (-1)^j \left(\frac{1}{z} - \round{1}{z}\right)
$$
we have that both $|f'(z)|=\frac{1}{|z|^2}$ and $|f''(z)|=\frac{1}{|z|^3}$.  Since we have fixed $R$ all of the parts in $\mathcal{P}_R$ lie in an annulus (bounded away from $0$), the bound follows.  
\end{proof}

Bounded distortion leads to the following estimate for Birkhoff sums
\begin{lemma}
For any two  $x, x_0 \in \Sigma_R$  and $b \in \Gamma^M_x \cap \Gamma^M_{x_0}$ we have
$$
S_M\tau(b \Vert x) = S_M\tau(b \Vert x_0)(1+O(1)).
$$
\end{lemma}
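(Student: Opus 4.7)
The plan is to invoke the bounded distortion estimate of Lemma~\ref{bd} together with the exponential contraction of the cylinder sets under $f$. First I would write each Birkhoff sum as a telescoping sum:
\begin{align*}
S_M\tau(b \Vert x) - S_M\tau(b \Vert x_0) = \sum_{k=0}^{M-1} \bigl(\tau(\sigma^k(b \Vert x)) - \tau(\sigma^k(b \Vert x_0))\bigr).
\end{align*}
The key geometric observation is that $\sigma^k(b \Vert x) = (b_{k+1}, \ldots, b_M, x_1, x_2, \ldots)$ and $\sigma^k(b \Vert x_0) = (b_{k+1}, \ldots, b_M, x_{0,1}, x_{0,2}, \ldots)$ agree on their first $M-k$ coordinates. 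Hence $\pi(\sigma^k(b \Vert x))$ and $\pi(\sigma^k(b \Vert x_0))$ both lie in the cylinder $C_{(b_{k+1}, \ldots, b_M)}$, which, as already noted in the symbolic-encoding section, has diameter at most $2^{-(M-k)}$ because $|f'|\leq 1/2$ on $S(\mathcal{X})$.

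Next I would use Lemma~\ref{bd} to convert this spatial proximity into a bound on $|\tau|$. Since $\tau(y) = \log|f'(\pi(y))|$, its spatial derivative on each part $p \in \mathcal{P}_R$ equals $f''/f'$, which is bounded in modulus by an absolute constant $C = C(R)$. Applied on the $k$-th term this gives
\begin{align*}
|\tau(\sigma^k(b \Vert x)) - \tau(\sigma^k(b \Vert x_0))| \leq C \cdot 2^{-(M-k)}.
\end{align*}
Summing the geometric series yields $|S_M\tau(b \Vert x) - S_M\tau(b \Vert x_0)| \leq C \sum_{j=1}^{M} 2^{-j} \leq C$, i.e., the two Birkhoff sums differ by $O(1)$.

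Finally, to recover the multiplicative form $(1 + O(1))$ stated in the lemma, I would observe that $\tau \leq \log(1/2) < 0$ uniformly on $S(\mathcal{X})$, so $|S_M\tau(b \Vert x_0)| \geq M \log 2$ grows linearly in $M$. Dividing the additive error by this quantity gives the claimed multiplicative relation (indeed, the stronger estimate $S_M\tau(b \Vert x)/S_M\tau(b \Vert x_0) = 1 + O(1/M)$, which comfortably implies $(1+O(1))$).

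There is no serious obstacle: given Lemma~\ref{bd} and the standard cylinder-diameter bound, the proof is a two-line telescoping-and-sum argument. The only point requiring care is verifying that both orbits stay inside the same parts at each step (which is automatic from $b \in \Gamma^M_x \cap \Gamma^M_{x_0}$), so that Lemma~\ref{bd} may be applied part-by-part with a uniform constant.
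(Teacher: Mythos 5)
Your proposal is correct and follows essentially the same route as the paper: both points of the $k$-th shifted orbit lie in a common cylinder of exponentially small diameter, and the Mean Value Theorem together with the bounded distortion bound on $f''/f'$ (Lemma~\ref{bd}) turns this into a summable geometric series, giving an additive $O(1)$ discrepancy between the two Birkhoff sums. Your version is in fact slightly more careful than the paper's write-up, both in using the correct cylinder length $M-k$ (so diameter $2^{-(M-k)}$ rather than the paper's $2^{-k}$, though either sums to $O(1)$) and in explicitly converting the additive bound into the multiplicative form via $|S_M\tau| \geq M\log 2$.
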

\begin{proof}
For $0 \leq k \leq M$ both $\pi(\sigma^k(b \Vert x))$ and $\pi(\sigma^k(b \Vert x_0))$ are in the same cylinder $C_{\sigma^k(b)}$ which has diameter at most $\frac{1}{2^k}$.  

The Mean Value Theorem combined with bounded distortion yields
\begin{align*}
|S_M(\tau(b \Vert x)) - S_M(\tau(b \Vert x_0))| &\leq \sum_{k=0}^M| \tau(\sigma^k(b \Vert x) ) - \tau(\sigma^k(b\Vert x_0)) | \\
&\leq \sum_{k=0}^M \log |f' (\pi(\sigma^k(b \Vert x)))| - \log|f' (\pi(\sigma^k(b \Vert x_0)))| \\
&\leq \sum_{k=0}^M C |\pi(\sigma^k(b \Vert x)) - \pi(\sigma^k(b \Vert x_0 ))| \\
&\leq C \sum_{k=0}^\infty \frac{1}{2^k} < C'.
\end{align*}
\end{proof}

Returning to our estimation of the measure $\mu_b$, we pick an arbitrary $a_0 \in \adm{b}{x}{R}$ and the lemma gives
\begin{align*}
\mu_b \leq C e^{z S_M \tau(b \Vert a_0 \Vert x)}  \sum_{a \in \adm{b}{x}{R}} e^{z S_R\tau(a \Vert x)} \delta_{\pi_q(g_a)}.
\end{align*}

\subsubsection{Expressing $\mu_b$ as a convolution}\

Now for a divisor $L$ of $R$, break each $a \in \adm{b}{x}{R}$ into subwords of length $L$, i.e. write each $a=a_1 \Vert \cdots \Vert a_r$ where $l(a_i)=L$.  Then
$$
\sum_{a \in \adm{b}{x}{R}} e^{z S_R \tau(a\Vert x)} \delta_{\pi_q(g_a)} = \sum_{a \in \adm{b}{x}{R}} e^{z S_L \tau(a_1 \Vert a_2 \Vert \cdots \Vert a_r \Vert x)} e^{z S_L \tau(a_2 \Vert a_3 \Vert \cdots \Vert a_r \Vert x)} \cdots e^{z S_L \tau (a_r \Vert x)} \delta_{\pi_q(g_a)}.
$$

For each $a_i$, we will further decompose the word as a long prefix (of length $L-4$) and a short suffix.  Write $a_i = a_i^{(L-4)}\Vert a_i^{(4)}$.  In order to separate dependence on the suffixes, we replace $ e^{z S_L \tau(a_i \Vert a_{i+1} \Vert \cdots \Vert a_r \Vert x)}$ with $e^{z S_L \tau(a_i \Vert a_{i+1}^{(L-4)} \Vert x_i)}$ where $x_i \in \Sigma_R$ is some arbitrary admissible choice based on $a_{i+1}^{(L-4)}$.  Since we will be replacing many of the weights in $\mu_b$, we need to sharpen the estimate from the previous lemma.  In particular, since $\pi(\sigma^k(a_i \Vert a_{i+1}^{(L-4)} \Vert x_i))$ and $\pi(\sigma^k(a_i \Vert a_{i+1} \Vert \cdots \Vert x)$ are in the same cylinder $C_{\sigma^k(a_i \Vert a_{i+1}^{(L-4)})}$ for $0 \leq k \leq 2L-4$ we have 
$$
| S_L \tau(a_i \Vert a_{i+1} \Vert \cdots \Vert a_r \Vert x) - S_L \tau(a_i \Vert a_{i+1}^{(L-4)} \Vert x_i)| < \frac{C}{2^{L-4}}.
$$
Hence making the substitution for each of the $r-1$ subwords (no substitution is necessary for $a_R$) gives
\begin{align*}
\mu_b &\leq Ce^{z (r-1)2^{-L}}\sum_{a \in \adm{b}{x}{R}} \left[ \prod_{i=1}^r e^{z S_L \tau(a_i \Vert a_{i+1}^{(L-4)} \Vert x_i)}\right]  \delta_{\pi_q(g_a)} \\
&= Ce^{z (r-1)2^{-L}}\sum_{a \in \adm{b}{x}{R}} \left[ \prod_{i=1}^r  e^{z S_L \tau(a_i \Vert a_{i+1}^{(L-4)} \Vert x_i)} \right] [\delta_{\pi_q(g_{a_1})}*\delta_{\pi_q(g_{a_2})}*\cdots*\delta_{\pi_q(g_{a_r})}].
\end{align*}

Instead of decomposing $a$ into subwords of length $L$, we would like to start with subwords and determine which concatenations are admissible.  We may choose $a  \in \adm{b}{x}{R}$ in the following way: 
\begin{enumerate}
\item select $a_1^{(L-4)} \in \dm{b}{L-4}$ and $a_i^{(L-4)} \in \Gamma^{L-4}$ for the remaining $i=2,\ldots,r$.
\item select $a_r^{(4)} \in \adm{{a_{r}^{(L-4)}}}{x}{4}$ and  $a_i^{(4)} \in \adm{{a_{i-1}^{(L-4)}}}{{a_i^{(L-4)}}}{4}$ for the remaining $i=1,\ldots,r-1$.
\end{enumerate}
The effect is to separate the sum into an outer sum depending on the prefixes of length $L-4$ and an inner sum of the suffixes of length $4$.  We will also distribute the product $ \left[ \prod_{i=1}^r   e^{z S_L \tau(a_i \Vert a_{i+1}^{(L-4)} \Vert x_i)} \right]$ into the convolution:
\begin{align*}
\sum_{a_1^{(L-4)}, \ldots, a_r^{(L-4)}} \sum_{a_1^{(4)}, \ldots, a_r^{(4)}} \left[ e^{z S_L \tau(a_1 \Vert a_2^{(L-4)} \Vert x_1)} \delta_{\pi_q(g_{a_1})} \right] * \cdots *  \left[ e^{z S_L \tau(a_r \Vert x)} \delta_{\pi_q(g_{a_r})} \right] \\
=\sum_{a_1^{(L-4)}, \ldots, a_r^{(L-4)}} \left[ \sum_{a_1^{(4)}}  e^{z S_L\tau(a_1 \Vert a_2^{(L-4)} \Vert x_1)} \delta_{\pi_q(g_{a_1})} \right]  * \cdots * \left[ \sum_{a_r^{(4)}} e^{z S_L \tau(a_r \Vert x)} \delta_{\pi_q(g_{a_r})} \right].
\end{align*}

Define \label{etaj}
$$\eta_j := \sum_{\alpha_j^{(4)}} e^{z S_L(a_j \Vert a_{j+1}^{(L-4)} \vert x_j)} \delta_{\pi_q(g_{a_j})}$$
 as a distribution on $SL_2(q)$.  Our first observation about the measures $\eta_j$ is that the ratio of any two coefficients is bounded. For two admissible $a_j^{(4)}, \tilde{a}_j^{(4)}$
\begin{align*}
\left|S_L \tau( a_j^{(L-4)} \Vert a_j^{(4)} \Vert a_{j+1}^{(L-4)} \Vert x_j) - S_L \tau ( a_j^{(L-4)} \Vert \tilde{a}_j^{(4)} \Vert a_{j+1}^{(L-4)} \Vert x_j)\right|  \\
\leq C+ \left| S_4 \tau (a_j^{(4)} \Vert a_{j+1}^{(L-4)}\Vert x_j) - S_4 \tau(\tilde{a}_j^{(4)} \Vert a_{j+1}^{(L-4)}\Vert x_j)\right|.
\end{align*}
The two cylinders $C_{a_j^{(4)}}, C_{\tilde{a}_j^{(4)}}$ may be disjoint.  However, the distance between them is still bounded since they lie in $S(\mathcal{X}) \subset \{ x+iy : |x|<1/2, 0<y<1/2\}$.  So an application of the Mean Value Theorem gives that the second term in the inequality is less than some universal constant.  In other words, 
$$
e^{z S_L  \tau( a_j^{(L-4)} \Vert a_j^{(4)} \Vert a_{j+1}^{(L-4)} \Vert x_j)} = e^{z S_L  \tau( a_j^{(L-4)} \Vert \tilde{a}_j^{(4)} \Vert a_{j+1}^{(L-4)} \Vert x_j)} (1+O(1))
$$
So the coefficients of the sum defining $\eta_j$ are nearly flat.  In order to establish an expansion result for $\eta_j$, we will also need the following:
\begin{lemma}
\label{gen}
For any $j$, pairs of admissible suffixes of $a_j^{(L-4)}$ of length $4$ generate all of $\slt{\Zi}$.  Specifically, for any two letters $i,j \in \mathcal{P}_R$, we have
$$
\left\langle \pi(a) \pi(\tilde{a})^{-1} : a, \tilde{a} \in \adm{i}{j}{4} \right\rangle = \slt{\Zi}
$$
where $\pi: \Sigma_R \to \slt{\Zi}$.
\end{lemma}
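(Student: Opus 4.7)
The goal is to show that the group $G := \langle \pi(a)\pi(\tilde a)^{-1} : a, \tilde a \in \adm{i}{j}{4} \rangle$ equals $\slt{\Zi}$. My plan is to exhibit, for each of the four generators $T_1 = \begin{pmatrix}1 & 1 \\ 0 & 1\end{pmatrix}$, $T_i = \begin{pmatrix}1 & i \\ 0 & 1\end{pmatrix}$, $S = \begin{pmatrix}0 & -1 \\ 1 & 0\end{pmatrix}$, $R = \begin{pmatrix}-i & 0 \\ 0 & i\end{pmatrix}$ of $\slt{\Zi}$ recalled at the start of Section~2, an explicit pair of length-four admissible words $a, \tilde a \in \adm{i}{j}{4}$ whose ratio realises that generator up to multiplication or conjugation by an element already shown to lie in $G$. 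Once all four generators are in $G$, the conclusion follows from the Picard group presentation.

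First I would write out, for each part $p \in \mathcal{P}_R$, the inverse branch $g_p \in \slt{\Zi}$ as an explicit product in the standard generators, recording the Gaussian-integer shift $n_p = \round{\cdot}$ and rotation bit $k_p \in \{0,1\}$ that characterise it. For $a = (p_1, p_2, p_3, p_4)$, the element $\pi(a) = g_{p_1}g_{p_2}g_{p_3}g_{p_4}$ is then a word of eight generator factors, and for pairs $a, \tilde a$ agreeing at their first and last letters the ratio collapses to a $g_{p_1}$-conjugate of a product depending only on the two interior letters.

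Next I would invoke irreducibility of $(\Sigma_R, \sigma)$ (valid for $R \geq 4$) to guarantee that, for every admissible first letter $p_1$ and last letter $p_4$, a rich family of interior pairs $(p_2, p_3)$ completes $(p_1, p_2, p_3, p_4)$ to an admissible word in $\adm{i}{j}{4}$. By tuning the translation differences $n_{p_k}-n_{\tilde p_k}$ over admissible choices of interior letters, direct matrix computation produces $g_{p_1}$-conjugates of $T_1$ and $T_i$. A separate family of pairs in which the rotation bits differ in total parity, with translation parameters held fixed, produces a conjugate of $R$, and combining this with the translations already in $G$ then produces $S$.

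The main obstacle is the admissibility bookkeeping: for each boundary pair $(i,j)$ one must verify that the constraints $i \to p_1$ and $p_4 \to j$ still leave enough freedom in the two interior positions to realise every required ratio. This reduces to a finite case analysis based on the eight-region Markov structure of $\mathcal{P}_R$ displayed in Figure~\ref{8regions}, and the choice of length four is precisely what guarantees two interior positions, the minimum needed for this flexibility independently of the endpoints.
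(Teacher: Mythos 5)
Your skeleton is the same as the paper's: realize the four canonical generators of $\slt{\Zi}$ as ratios $\pi(a)\pi(\tilde a)^{-1}$ with $a,\tilde a \in \adm{i}{j}{4}$, using the local branch form $SQ^{k}T_{-\round{z}}$ (write $T_m$, $L_m$ for the upper and lower unipotents with entry $m$). But the paper's proof \emph{is} the step you postpone: an automated search that, for every boundary pair $(i,j)$, exhibits explicit admissible word pairs realizing the generators. Irreducibility of $(\Sigma_R,\sigma)$ only guarantees \emph{some} admissible completion between two letters; it does not give interior pairs with prescribed translation differences and rotation parities subject to the constraints $i\to p_1$ and $p_4\to j$. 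Deferring this to an unexecuted ``finite case analysis'' defers the entire content of the lemma.

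More seriously, the structural shortcut would not close even if the admissibility freedom were granted. Strip the common conjugator $g_{p_1}$ (which, note, is \emph{not} an element of $G$ --- ``conjugation by an element already shown to lie in $G$'' is not available; conjugation is harmless only when one fixed conjugator serves an entire generating set). Your translation-difference family, at best, yields $T_1$ and $T_i$; your rotation-parity family with translations held fixed yields $w'Q^{\pm1}w'^{-1}$ with $w'=T_{n_{p_2}}Q^{-k_{p_2}}S^{-1}T_{n_{p_3}}$, and a direct computation shows this matrix has lower-left entry equal to a unit times $2n_{p_3}$, never a unit since $|n_{p_3}|\ge 2$. Consequently all the elements you have produced reduce to upper-triangular matrices modulo $(1+i)$ (indeed modulo any Gaussian prime dividing $2n_{p_3}$), so they generate a \emph{proper} subgroup of $\slt{\Zi}$, and the step ``combining this with the translations already in $G$ then produces $S$'' cannot work as stated. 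A repair exists within your framework --- use pairs whose third-letter translations differ by a unit (with matched rotation data), which give $L_1, L_i$ up to the same conjugator, and then conclude by elementary generation of $SL_2$ over the Euclidean domain $\Zi$ --- but verifying that such pairs are admissible for every $(i,j)$ is again exactly the finite search the paper performs and your proposal omits.
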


The lemma is proved by finding admissible expansions of the four canonical generating matrices for $SL_2(\mathbb{Z}[i])$,
$$
 T_1 = \begin{pmatrix} 1 & 1 \\ 0 & 1 \end{pmatrix}, \hspace{1em} T_i= \begin{pmatrix} 1 & i \\ 0 & 1 \end{pmatrix}, \hspace{1em} Q= \begin{pmatrix} -i & 0 \\ 0 & i \end{pmatrix}, \hspace{1em} S= \begin{pmatrix}  0 & -1 \\ 1 & 0 \end{pmatrix} ,
$$
via matrices the form
$$
\begin{pmatrix} 0 & -1 \\ 1 & 0 \end{pmatrix} \begin{pmatrix} -i & 0 \\ 0 & i \end{pmatrix}^j \begin{pmatrix} 1 & -z \\ 0 & 1 \end{pmatrix}.
$$
An automated search through all sufficiently small expressions with restricted coefficients yields the required matrix expansions.

\subsubsection{Expansion via Selberg's $3/16$ Theorem} \

We are now ready to prove the expansion theorem for the $\eta_j$'s defined on page~\pageref{etaj}.  For each square-free $q$, we have the product representation $SL_2(q) \cong \prod_{p|q} SL_2(p)$ which gives rise to the following decomposition for functions defined on $SL_2(q)$:
$$L_2(SL_2(q)) = \bigoplus_{q' | q} E_{q'}$$ 
where 
\begin{align*}
E_{q'} := \{& \varphi: SL_2(q) \to \mathbb{C} | \\
 &\varphi(g) = \varphi(g')\text{ if } g=g'(q'), \\
& \langle \varphi, \psi \rangle = 0 \text{ for all } \psi \in E_{q''} \text{ such that } q'' | q' \text{ and }  q''<q'\}
\end{align*}
We first treat one $E_q$ at a time, and then assemble them using Fourier-Walsh decomposition (see Section~\ref{fw} on page~\pageref{fw}).
\begin{theorem}
\label{flat}
If $\varphi \in E_q$, then
$$|| \eta_j * \varphi||_2 \leq (1-C)||\eta_j||_1 ||\varphi||_2$$.
\end{theorem}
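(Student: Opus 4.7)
The plan is to reduce the claim to a uniform-in-$q$ spectral gap for convolution by the uniform probability measure on a fixed finite generating set of $SL_2(q)$ when restricted to the new subspace $E_q$. That spectral gap is the output of the framework of \cite{BGS} once the Picard-group analogue of Selberg's $3/16$ theorem is supplied. The two local inputs I will use are (i) the near-flatness of the weights $e^{zS_L\tau(\cdots)}$ just proved for $\eta_j$ and (ii) Lemma~\ref{gen}, asserting that pairs of admissible length-$4$ suffixes generate $SL_2(\mathbb{Z}[i])$.

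First I would pass to an auxiliary non-negative measure. Writing $\tilde\eta_j(g) := \overline{\eta_j(g^{-1})}$, one has the identity
$$
\|\eta_j * \varphi\|_2^2 = \langle (\tilde\eta_j * \eta_j)*\varphi, \varphi\rangle
$$
and the pointwise bound $|\tilde\eta_j * \eta_j|(g) \leq \lambda(g)$, where $\lambda(g) := \sum_h |\eta_j(h)|\cdot|\eta_j(hg)|$ is non-negative and self-adjoint of total mass $\|\eta_j\|_1^2$. By a standard Schur / Perron--Frobenius domination, the operator norm of convolution by $\tilde\eta_j * \eta_j$ on any $\pi_q$-invariant subspace is bounded by that of convolution by $\lambda$.

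Next I would find a flat piece inside $\lambda$. Near-flatness gives $|\eta_j(g)| \asymp \|\eta_j\|_1/|S_j|$ for every $g$ in the support $S_j := \pi_q(\{g_a : a \in \adm{i}{j}{4}\})$, and hence $\lambda(g) \gtrsim \|\eta_j\|_1^2/|S_j|^2$ uniformly over $g \in S_j^{-1}S_j$. By Lemma~\ref{gen} there is a fixed finite symmetric set $\Xi \subset SL_2(\mathbb{Z}[i])$, independent of $q$, that generates $SL_2(\mathbb{Z}[i])$ and whose image in $SL_2(q)$ lies in $S_j^{-1}S_j$. Consequently $\lambda \geq c_0\|\eta_j\|_1^2\, u_{\pi_q(\Xi)}$ as measures on $SL_2(q)$, for an absolute $c_0 > 0$, where $u_{\pi_q(\Xi)}$ is the uniform probability on $\pi_q(\Xi)$.

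Finally I would invoke the expansion step. The main theorem of \cite{BGS}, whose nontrivial input is the Picard-group analogue of Selberg's $3/16$ theorem together with the Burger--Sarnak new/old decomposition, yields an absolute $\theta > 0$ such that, uniformly over squarefree $q$ and $\varphi \in E_q$,
$$
\|u_{\pi_q(\Xi)} * \varphi\|_2 \leq (1-\theta)\|\varphi\|_2.
$$
Writing $\lambda = c_0\|\eta_j\|_1^2\, u_{\pi_q(\Xi)} + \rho$ with $\rho \geq 0$ of mass $(1-c_0)\|\eta_j\|_1^2$, the triangle inequality delivers $\|\lambda * \varphi\|_2 \leq (1 - c_0\theta)\|\eta_j\|_1^2\|\varphi\|_2$; unwinding via the identity at the start gives $\|\eta_j * \varphi\|_2 \leq \sqrt{1 - c_0\theta}\,\|\eta_j\|_1\|\varphi\|_2$, so we may take $C := 1 - \sqrt{1 - c_0\theta}$. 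The main obstacle is the expansion step itself: one must verify the applicability of a uniform-in-$q$ spectral gap on $E_q$ for a Cayley graph on a fixed generating set of $SL_2(q)$, which packages the Burger--Sarnak new/old analysis with a Selberg-type bound on $\Gamma(q)\backslash\mathbb{H}^3$. Once that is in hand, everything else is routine manipulation of positive measures and Cauchy--Schwarz.
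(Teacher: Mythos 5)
Your proof rests on exactly the same three inputs as the paper's: near-flatness of the weights, Lemma~\ref{gen}, and the Selberg $3/16$ analogue for congruence subgroups of $\slt{\Zi}$ packaged as a uniform spectral gap for the Cayley graphs of $\slt{q}$ on a fixed generating set. What differs is the mechanism. The paper expands $\|\eta_j*\varphi\|_2^2$ as a finite double sum over suffix pairs, invokes the gap only in the weak form ``for every $\varphi\in l_0^2(\slt{q})$ some fixed pair $(a_0,\tilde a_0)$ moves $\varphi$ by $\geq\epsilon\|\varphi\|_2$,'' bounds the remaining boundedly many terms trivially, and uses flatness to see that the one saved term is a definite proportion of the whole. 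You instead pass to the positive kernel $\lambda$, peel off a copy of the uniform measure on the image of a fixed symmetric generating set, and quote the full expander statement for that measure. In the regime where the theorem is actually used (nonnegative weights: in the proof of Theorem~\ref{mu} the corollary is applied to the majorant $\nu$ with weights $e^{\Re{z}S_R\tau}$), your packaging is clean, gives an explicit constant $C=1-\sqrt{1-c_0\theta}$, and is a legitimate alternative to the paper's one-good-pair argument.

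Two caveats, one of which is a real gap as written. First, the step ``by Schur/Perron--Frobenius domination, $|\tilde\eta_j*\eta_j|\le\lambda$ pointwise implies the operator norm of convolution by $\tilde\eta_j*\eta_j$ on $E_q$ is at most that of $\lambda$'' is false for signed or complex kernels: the usual domination argument replaces $\varphi$ by $|\varphi|$, and $|\varphi|$ leaves $E_q$ (it acquires a large constant component), so the restricted norm of the positive majorant controls nothing. Concretely, on $\mathbb{Z}/2\mathbb{Z}$ take $\eta=\delta_0-\delta_1$; then $\tilde\eta*\eta=2\delta_0-2\delta_1$ and $\lambda=2\delta_0+2\delta_1$, and on the sign character the former acts with norm $4$ while the latter acts with norm $0$. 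This matters because $\eta_j$ is defined with weights $e^{zS_L\tau(\cdot)}$ for complex $z$, and flatness controls only moduli, not phases; your argument (like, in fairness, the paper's own term-by-term estimate) is really a proof for nonnegative weights, and for complex $z$ one should either state the result for the positive majorant or keep track of phases explicitly. Second, your triangle-inequality step needs the two-sided bound $\|u*\varphi\|_2\le(1-\theta)\|\varphi\|_2$, whereas Selberg/property-$\tau$ gives a gap only at the top of the spectrum; this is easily repaired by noting that the identity lies in $S_j^{-1}S_j$ (take $a=\tilde a$), so the averaging measure is lazy and its spectrum is also bounded away from $-1$ by an absolute constant, but it should be said.
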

\begin{proof}
First, we retrace the standard steps to rewrite $||\eta_j * \varphi||_2^2$ in terms of a convolution operator.  By definition,
\begin{align*}
 || \eta_j * \varphi||_2^2  &= \langle \eta_j * \varphi, \eta_j * \varphi \rangle \\
&= \left\langle \sum_{a_j^{(4)}} \beta_{a_j^{(4)}} \delta_{\pi_q(a_j)} * \varphi, \sum_{a_j^{(4)}} \beta_{a_j^{(4)}} \delta_{\pi_q(a_j)} * \varphi \right\rangle 
\end{align*}
where we have 
$$
 \beta_{a_j^{(4)}} := e^{z S_L \tau (a_j \Vert a_{j+1}^{(L-4)}\Vert x_j)}.
$$
Expanding the square gives
\begin{align*}
 || \eta_j * \varphi||_2^2 &= \sum_{k \in G} \left[ \sum_{a_j^{(4)}} \beta_{a_j^{(4)}} \delta_{\pi_q(a_j)} * \varphi (k) \right]^2 \\
&= \sum_k \sum_{a_j^{(4)}, \tilde{a}_j^{(4)}} \beta_{a_j^{(4)}} \beta_{\tilde{a}_j^{(4)}} \varphi(k \pi_q(a_j)^{-1}) \varphi (k \pi_q(\tilde{a}_{j})^{-1}).
\end{align*}
(Recall that $a_j = a_j^{(L-4)} \Vert a_j^{(4)}$ and $\tilde{a}_j = a_j^{(L-4)} \Vert \tilde{a}_j^{(4)}$.)  We reorder the following sums as
\begin{align*}
 || \eta_j * \varphi||_2^2 &= \sum_k \sum_{a_j^{(4)}, \tilde{a}_j^{(4)}} \beta_{a_j^{(4)}}  \beta_{\tilde{a}_j^{(4)}}  \varphi(k \pi_q(a_j^{(4)})^{-1}) \varphi (k \pi_q(\tilde{a}_j^{(4)})^{-1})  \\
&= \sum_k \sum_{a_j^{(4)}, \tilde{a}_j^{(4)}} \beta_{a_j^{(4)}}  \beta_{\tilde{a}_j^{(4)}} \varphi(k \pi_q(a_j^{(4)})^{-1} \pi_q(\tilde{a}_j^{(4)} )) \varphi (k).
\end{align*}
Now, from Lemma~\ref{gen} and the analogue of Selberg's $3/16$ theorem for congruence subgroups of $\slt{\Zi}$ (see \cite{sarnak} or Theorem 6.1 in \cite{EGM}) we deduce that for any $\varphi \in l_0^2(\slt{q})$ there is some choice of $a_0, \tilde{a}_0 \in a_j^{(4)}, \tilde{a}_j^{(4)}$ so that
$$
|| \pi_q(a_0) \pi_q(\tilde{a}_0)^{-1} * \varphi - \varphi||_2>\epsilon ||\varphi||_2.
$$
The law of cosines gives
$$
\left\Vert \pi_q(a_0) \pi_q(\tilde{a}_0)^{-1}* \varphi - \varphi\right\Vert^2 = 2 - 2  \left\langle \pi_q(a_0) \pi_q(\tilde{a}_0)^{-1}* \varphi, \varphi \right\rangle
$$
and so
$$
\beta_{a_0} \beta_{\tilde{a}_0} \left\langle   \pi_q(a_0) \pi_q(\tilde{a}_0)^{-1}* \varphi, \varphi \right\rangle < \beta_{a_0} \beta_{\tilde{a}_0} (1-\epsilon') ||\varphi||_2.
$$
We separate the $a_0, \tilde{a}_0$ term from the rest of the sum as follows
$$
||\eta_j * \varphi||_2^2 \leq \sum_{a_j^{(4)}, \tilde{a}_j^{(4)}}' \beta_{a_j^{(4)}}  \beta_{\tilde{a}_j^{(4)}} ||\varphi||_2 +  \beta_{a_0} \beta_{\tilde{a}_0} (1-\epsilon') ||\varphi||_2
$$
Since we established earlier that $\beta_{\alpha_j}= \beta_{\alpha_j'}(1+O(1))$,  this gives
$$||\eta_j * \varphi ||_2^2 \leq (1-C) ||\eta_j||_1^2 ||\varphi||_2^2.$$
\end{proof}

Apply Theorem~\ref{flat} to each $\eta_j$ and we have
\begin{corollary}
\label{expcor}
 For $\varphi \in E_q$
\begin{align*}
\left\Vert \sum_{a \in \adm{b}{x}{R}} \left[ e^{z S_L \tau(a_1 \Vert a_2^{(L-4)} \Vert x_1)} \delta_{\pi_q(g_{a_1})} \right] * \cdots *  \left[ e^{z S_L \tau(a_r \Vert x)} \delta_{\pi_q(g_{a_r})} \right] * \varphi\right\Vert_2 \\
 \leq (1-C)^r \left( \sum_{a \in \adm{b}{x}{R}} \prod_{i=1}^r | e^{z S_L \tau(a_i \Vert a_{i+1}^{(L-4)} \Vert x_i)}| \right) ||\varphi||_2.
\end{align*}
\end{corollary}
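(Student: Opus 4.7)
The plan is to iterate Theorem~\ref{flat} once per factor $\eta_j$, pulling out a factor of $(1-C)$ each time, and then sum over the prefix data in the outer sum.

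First, recall the prefix/suffix decomposition from the paragraph preceding the definition of $\eta_j$: the sum over $a \in \adm{b}{x}{R}$ reorganises as an outer sum over the long prefixes $a_1^{(L-4)}, \ldots, a_r^{(L-4)}$ and an inner sum over the short suffixes $a_1^{(4)}, \ldots, a_r^{(4)}$. After distributing the weights $e^{zS_L\tau(a_i \Vert a_{i+1}^{(L-4)}\Vert x_i)}$ into the convolution (as was done just above the definition of $\eta_j$), the left-hand side of the corollary takes the form
$$
\sum_{a_1^{(L-4)},\ldots,a_r^{(L-4)}}\; \eta_1 * \eta_2 * \cdots * \eta_r * \varphi,
$$
where each $\eta_j$ depends only on the prefixes $a_j^{(L-4)}$ and $a_{j+1}^{(L-4)}$ (and the fixed auxiliary point $x_j$). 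By the triangle inequality for $\|\cdot\|_2$ we may bound the $\ell^2$-norm of the whole expression by
$$
\sum_{a_1^{(L-4)},\ldots,a_r^{(L-4)}}\; \bigl\| \eta_1 * \eta_2 * \cdots * \eta_r * \varphi \bigr\|_2.
$$

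Next I want to apply Theorem~\ref{flat} once to each $\eta_j$. The key observation that makes the iteration legal is that the subspace $E_q \subset L^2(\slt{q})$ is invariant under right-convolution by arbitrary (signed or complex) measures on $\slt{q}$: indeed, $E_q$ is defined by orthogonality to functions pulled back from quotients $\slt{q'}$ with $q'\mid q$, $q'<q$, and the right-translation action commutes with these projections because $\slt{q}\to \slt{q'}$ is a group homomorphism. Consequently $\eta_r * \varphi \in E_q$, then $\eta_{r-1} * \eta_r * \varphi \in E_q$, and so on. Applying Theorem~\ref{flat} to peel off one factor at a time gives
$$
\bigl\| \eta_1 * \eta_2 * \cdots * \eta_r * \varphi \bigr\|_2 \;\leq\; (1-C)^r\, \|\eta_1\|_1 \|\eta_2\|_1 \cdots \|\eta_r\|_1\, \|\varphi\|_2.
$$

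Finally, unpacking $\|\eta_j\|_1 = \sum_{a_j^{(4)}} \bigl|e^{zS_L\tau(a_j\Vert a_{j+1}^{(L-4)}\Vert x_j)}\bigr|$ (the total variation of a finite combination of point masses is the sum of the moduli of the coefficients) and reassembling the outer sum over prefixes together with the inner sum over suffixes back into a single sum over $a\in \adm{b}{x}{R}$ gives
$$
\sum_{a_1^{(L-4)},\ldots,a_r^{(L-4)}} \prod_{j=1}^r \|\eta_j\|_1 \;=\; \sum_{a\in \adm{b}{x}{R}} \prod_{i=1}^r \bigl|e^{zS_L\tau(a_i\Vert a_{i+1}^{(L-4)}\Vert x_i)}\bigr|,
$$
which is exactly the factor appearing on the right-hand side of the corollary. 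Combining the three displays yields the claimed bound.

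The only nontrivial step is the invariance of $E_q$ under right-convolution, which is what licenses the telescoping use of Theorem~\ref{flat}; the rest is bookkeeping with the prefix/suffix decomposition introduced in the construction of the $\eta_j$.
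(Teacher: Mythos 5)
Your proof is correct and follows the same route the paper intends: the paper's entire argument for Corollary~\ref{expcor} is the directive ``apply Theorem~\ref{flat} to each $\eta_j$,'' and you have simply made explicit the steps that licence this --- the prefix/suffix reorganisation into $\sum \eta_1 * \cdots * \eta_r$, the triangle inequality over prefixes, and crucially the invariance of $E_q$ under convolution (since the pullback subspaces from $\slt{q'}$, $q'\mid q$, are translation-invariant), which permits the telescoping application of the theorem. The only cosmetic remark is that $\Vert\eta_j\Vert_1$ equals the sum of the moduli of the coefficients only when distinct suffixes give distinct group elements; in general it is $\leq$ that sum, which is all you need.
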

Next, we exploit quasi-randomness of $SL_2(q)$ to get a bound for $\mu_b$.
\begin{theorem}
\label{mu}
For $R \asymp \log \GN{q}$ with $\mu_b$ as defined previously on page~\pageref{mub}, we have
$$||\mu_b * \varphi||_2 \leq C\GN{q}^{-1/4} ||\mu_b||_1 ||\varphi||_2$$
for any $\varphi \in E_q$.
\end{theorem}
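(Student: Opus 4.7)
The plan is to apply Corollary~\ref{expcor} to the block decomposition of $\mu_b$ already built in the previous subsection, then choose the parameter $R$ so that the exponential expansion gain overtakes the desired polynomial decay in $\GN{q}$.

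First we unpack the factorization of $\mu_b$: after replacing suffixes and extracting the $b$-dependent prefactor, we may dominate
$$
\mu_b \;\leq\; C\, e^{z S_M \tau(b\Vert a_0 \Vert x)}\, e^{z(r-1)2^{-L}} \sum_{a_1^{(L-4)},\ldots,a_r^{(L-4)}} \eta_1 * \eta_2 * \cdots * \eta_r,
$$
with $r = R/L$ the number of length-$L$ blocks, and observe that the same prefactor controls $||\mu_b||_1$ up to bounded multiplicative constants (via bounded distortion, Lemma~\ref{bd}). Convolving against $\varphi \in E_q$ and taking $L^2$-norms, the common prefactor cancels across both sides of the target inequality and the task reduces to bounding $||\eta_1 * \cdots * \eta_r * \varphi||_2$.

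Next we invoke Corollary~\ref{expcor}, the iterated form of Theorem~\ref{flat}: convolving $r$ times brings in a factor $(1-C)^r$, where $C>0$ is the absolute spectral gap coming from Selberg's $3/16$ theorem. Here the quasi-randomness of $\slt{q}$ enters through the decomposition $L_2(\slt{q}) = \bigoplus_{q'|q} E_{q'}$: this lets us apply Theorem~\ref{flat} on each orthogonal piece $E_{q'}$ separately, and the gap $C$ is the same absolute constant across all pieces. Putting everything together yields $||\mu_b * \varphi||_2 \ll (1-C)^r ||\mu_b||_1 ||\varphi||_2$, and the theorem then follows by fixing $L$ as an absolute constant and choosing the implied constant in $R \asymp \log \GN{q}$ large enough that $r = R/L \geq \log \GN{q} / \bigl(4\log(1/(1-C))\bigr)$, at which point $(1-C)^r \leq \GN{q}^{-1/4}$.

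The main obstacle we anticipate is uniform control of the various multiplicative corrections as $r$ itself grows like $\log \GN{q}$: the $O(1)$ distortion ratios of the $\beta_a$'s, the geometric error $e^{z(r-1)2^{-L}}$ from the suffix substitutions, and the cancellation of the prefactor between the left- and right-hand sides must all remain bounded by constants independent of $q$. Taking $L$ large (but still an absolute constant) absorbs the geometric error, while the quasi-randomness of $\slt{q}$ ensures that Selberg's gap $C$ is genuinely a single absolute constant applicable to every $E_q$ at once, rather than a $q$-dependent quantity that could erode the exponent $1/4$.
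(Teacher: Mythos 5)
There is a genuine gap at the step where you pass from Corollary~\ref{expcor} to the bound $||\mu_b * \varphi||_2 \ll (1-C)^r ||\mu_b||_1 ||\varphi||_2$. Corollary~\ref{expcor} applies to the exactly decoupled measure $\eta_1 * \cdots * \eta_r$ built from the suffix-substituted weights; $\mu_b$ itself is related to that measure only through pointwise domination of the absolute values of its coefficients (bounded distortion), i.e. $|\mu_b| \leq C\nu$ with $\nu$ positive and $\nu$ in turn comparable to an honest convolution of block measures. Pointwise domination controls $\ell^1$ and $\ell^2$ norms of the measures, but it does not transfer an $L^2 \to L^2$ bound against an arbitrary $\varphi \in E_q$: all it yields is $|\mu_b * \varphi| \leq C\,(\nu * |\varphi|)$, and $|\varphi|$ lies neither in $E_q$ nor in $l_0^2$, so the expansion estimate cannot be applied to it. Your "cancellation of the common prefactor on both sides" is exactly such an absolute-value manipulation. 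The problem is compounded by the fact that the weights $e^{z S_L \tau(\cdot)}$ are complex and the theorem must later be used with $|\Im{z}|$ unbounded (the contour shift), so the phase errors of size $|\Im{z}| 2^{-L}$ per block introduced by the suffix substitutions cannot be absorbed with $L$ an absolute constant; thus $\mu_b$ is not a controlled perturbation of a genuine convolution in any sense that preserves operator bounds.

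This is precisely why the paper argues differently: it defines $A:\varphi \mapsto \mu_b * \varphi$ on $E_q$, bounds the averaged singular values via $\mtr{(A^*A)^2} = |G|\,||\tilde{\mu}_b * \mu_b||_2^2 \leq C |G|\,||\tilde{\nu} * \nu||_2^2$ --- a statement about norms of measures, where absolute-value domination is legitimate --- uses the decoupling and expansion (this is where $R \asymp \log\GN{q}$ enters) only to flatten $\nu$, giving $||\tilde{\nu}*\nu||_2 \lesssim ||\nu||_1^2 |G|^{-1/2}$, and then invokes quasi-randomness, namely the Frobenius lower bound of roughly $(\GN{q}-1)/2$ on the dimension of nontrivial representations and hence on eigenvalue multiplicities, to convert the trace bound into the operator bound $||A||_{op} \ll \GN{q}^{-1/4}||\mu_b||_1$. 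So quasi-randomness is not, as you state, what makes the gap constant uniform over the pieces $E_{q'}$ (that uniformity comes from Selberg's theorem via Lemma~\ref{gen}); it is the ingredient that actually produces the exponent $1/4$, and your argument never uses it --- consistent with the fact that, were your shortcut valid, it would yield an arbitrarily large power saving in $\GN{q}$ merely by enlarging the constant in $R \asymp \log\GN{q}$, which should signal that the real difficulty (passing from measure flattening to an operator bound for the complex-weighted $\mu_b$) has been bypassed rather than addressed.
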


\begin{proof}
Recall
$$
\mu_b := \sum_{a \in \adm{b}{x}{R}} e^{z S_N\tau(b\Vert a \Vert x)} \delta_{\pi_q(g_a)} 
$$
Define
$$
\nu:= e^{\Re{z} S_M \tau(b \Vert a_0 \Vert x)}  \sum_{a \in \adm{b}{x}{R}} e^{\Re{z} S_R\tau(a \Vert x)} \delta_{\pi_q(g_a)}
$$
We established earlier that $|\mu_b| \leq C \nu$.  Corollary~\ref{expcor} and bounded distortion yield the following bound for $\nu$:
\begin{align*}
\left\Vert  \sum_{a \in \adm{b}{x}{R}} e^{\Re{z} S_R\tau(a \Vert x)} \delta_{\pi_q(g_a)} * \varphi\right\Vert_2  \leq (1-C)^r \left( \sum_{a \in \adm{b}{x}{R}}  e^{\Re{z} S_R\tau(a \Vert x)}  \right) ||\varphi||_2
\end{align*}
and therefore $||\nu * \varphi||_2 \leq (1-C)^r ||\nu||_1 ||\varphi||_2$.

Define $A$ as the convolution operator $\varphi \mapsto \mu_b * \varphi$.  First note that $A$ acts on $E_q$ since it's a linear combination of convolutions with delta functions.  Since $A^* A $ is self adjoint, we have $\mtr{A^*A} = \lambda_1^2 + \ldots  \lambda_{|G|-1}^2$.
\begin{align*}
\lambda^2 \text{mult}(\lambda) &\leq \mtr {A^* A} \\
&= \sum_{g \in G} \langle (A^* A)^2 \delta_g, \delta_g \rangle \\
& = \sum_{g \in G} ||\tilde{\mu}_b * \mu_b * \delta_g ||_2^2\\
 &= |G| ||\tilde{\mu}_b * \mu_b ||_2^2  \\
&\leq C |G| ||\tilde{\nu} * \nu||_2^2.
\end{align*}
where $\tilde{\mu}_b(g) = \overline{\mu_b}(g^{-1})$ and a similar definition applies to $\nu$.  The multiplicity of $\lambda$ equal to the dimension of the eigenspace is at least $\frac{\GN{q}-1}{2}$ by the Fobenius lemma.  So 
$$||A||_{op} = \max \lambda^{1/2} \leq C \left( \frac{|G| || \tilde{\nu} * \nu||_2^2}{\GN{q}} \right)^{1/4}.$$
We bound $||\tilde{\nu} * \nu||_2$ by introducing $\psi = \delta_e - \frac{1}{|G|} {\bf 1}_G$.  Observe $\psi \in l_0^2$ and $||\psi||_2<1$.  
\begin{align*}
||\tilde{\nu}*\nu||_2 &= ||\tilde{\nu}*\nu*\delta_e||_2 \\
& \leq ||\tilde{\nu}*\nu* \frac{{\bf 1}_G}{|G|}||_2 + ||\tilde{\nu}*\nu*\psi||_2 \\
& \leq \frac{||\nu||_1^2}{|G|^{1/2}} + ||\nu||_1||\nu * \psi||_2.
\end{align*}
Since $||\nu*\psi||_2 < (1-C)^R ||\nu||_1$, we can choose $R = C' \log q$ to get $||\tilde{\nu}*\nu||_2<\frac{||\nu||_1}{G^{1/2}}.$
\end{proof}

\subsubsection{Applying Theorem~\ref{mu}}\

Now we would like to use the previous bound on our congruence transfer operators defined on page~\pageref{to}.  Rewrite $M^n_z$ as
\begin{align*}
M_s^n f(x,g) &= \sum_{a \in \ad{x}{n}} e^{s S_n \tau( a \Vert x)} f(a \Vert x; g_a g)  \\
&= \sum_{a \in \ad{x}{M}}  \sum_{b \in \ad{a}{R}} e^{sS_n\tau(b \Vert a \Vert x)} f(b \Vert a \Vert x; g_b g_a g) \\
&= \sum_{a \in \ad{x}{M}}  \sum_{b \in \ad{a}{R}} e^{sS_n\tau(b \Vert a \Vert x)} f(b \Vert x_b; g_b g_a g) + O\left(|f|_\rho \rho^M \sum_{a \in \Gamma_x^n} e^{s S_n\tau(a\Vert x)} \right).
\end{align*}
where $x_b$ is arbitrarily chosen as long as $b \Vert x_b$ is admissible.  We will frequently use the fact that for $s \leq \delta_R$ and the transfer operators defined in the non-congruence setting we have
$$
|\mathcal{L}_s^n 1(x)| = \left| \sum_{a \in \Gamma_x^n} e^{z \tau(a\Vert x)}\right| < \lambda_{\Re{s}}^n.
$$
Filling this in for the second term in our bound for the congruence transfer operator gives
\begin{align*}
M_z^n f(x,g) &= \sum_{b \in \Gamma^R}  \sum_{a \in \adm{b}{x}{M}} e^{\tau^n(b \Vert a \Vert x)} f(b \Vert x_b; g_b g_a g) + O\left(|f|_\rho \rho^M \lambda_{\Re{s}}^n \right) \\
&= \sum_{b \in \Gamma^R}  [\mu_b *  f(b \Vert x_b; g_b \cdot) ](g) +  O\left(|f|_\rho \rho^M \lambda_{\Re{s}}^n \right).
\end{align*}
If $f(b || x_b, g_b \cdot) \in E_q$, then we are in position to use the bound for $\mu_b$:
$$
||\mu_b * f(b\Vert x_b ; g_b \cdot) ||_2 < C \GN{q}^{-1/4} ||\mu_b||_1 ||f(b\Vert x_b ; g_b \cdot) ||_{l_2(G)}< C\GN{q}^{1/4} ||\mu_b||_1 ||f||_\rho.
$$
For any $b \in \Gamma^R$,
\begin{align*}
\Vert \mu_b\Vert_1 &= \left\Vert \sum_{a \in \adm{b}{x}{R}} e^{s S_n \tau(b \Vert a \Vert x)} \delta_{g_a} \right\Vert_1 \\
&\leq \sum_{a \in \adm{b}{x}{R}} \left| e^{s S_R \tau(b \Vert a \Vert x)} e^{s S_M \tau(a \Vert x)}\right| \\
&\leq C |e^{s S_R \tau(b \Vert a_0 \Vert x)}| \sum_{a \in \adm{b}{x}{R}} | e^{s S_M \tau(a \Vert x)}| \\
&\leq C e^{\Re{s}  S_R \tau(b \Vert a_0 \Vert x)} \mathcal{L}_{\Re{s}}^R 1(x).
\end{align*}
Apply this bound for each summand in $M_s^n$,
\begin{align*}
||M_s^n f ||_\infty &\leq C \GN{q}^{-1/4} \sum_{b \in \Gamma^R}e^{\Re{s}  S_R \tau(b \Vert a_0 \Vert x)}  \left\Vert\mathcal{L}_{\Re{s}}^R 1(x) \right\Vert_\infty||f||_\infty + O(|f|_\rho \rho^M \lambda_{\Re{s}}^n)  \\
&\leq C \GN{q}^{-1/4} \Vert \mathcal{L}_{\Re{s}}^n 1(x) \Vert_\infty \Vert f \Vert_\infty+ O(|f|_\rho \rho^M \lambda_{\Re{s}}^n) \\
&\leq C \GN{q}^{-1/4} \lambda_{\Re{s}}^n \Vert f \Vert_\infty+ O(|f|_\rho \rho^M \lambda_{\Re{s}}^n).
\end{align*}

\subsection{Bounding $M_z$ in Variation}\
Now, we need to bound $|M_z^n f|_\rho$.   Suppose $x, y \in \Sigma$ and $w \in \ad{x}{k} \cap \ad{y}{k}$,
\begin{align*}
|M_z^n &f(w \Vert x; g) - M_z^n f(w\Vert y; g)| \\ & =\left| \sum_{a \in \ad{w}{n}} e^{z S_n \tau(a \Vert w \Vert x)} f(a\Vert w\Vert x, g_a g) - e^{z S_n \tau(a \Vert w \Vert y)} f(a \Vert w\Vert y, g_a g)\right| \\
&\leq \left| \sum_{a\in \ad{w}{n}} e^{z S_n \tau( a \Vert w \Vert x)} (f(a \Vert w \Vert x; g_a g) - f(a \Vert w \Vert y; g_a g))\right|  \\
&+\left| \sum_{a\in\ad{w}{n}} ( e^{z S_n \tau( a \Vert w \Vert x)} - e^{z S_n \tau( a \Vert w \Vert y)}) f(a\Vert w \Vert y, g_a g) \right|.
\end{align*}
For the first term, we note that $a \Vert w \Vert x$ and $a \Vert w \Vert y$ agree in the first $n+k$ letters so
\begin{align*}
 \left| \sum_{a\in \ad{w}{n}} e^{z S_n \tau( a \Vert w \Vert x)} (f(a \Vert w \Vert x; g_a g) - f(a \Vert w \Vert y; g_a g))\right| 
&\leq |f|_\rho \rho^{n+k} \sum_{a \in \ad{w}{n}}| e^{z S_n \tau( a \Vert w \Vert x)}| \\
&\leq |f|_\rho \rho^{n+k} \mathcal{L}_{\Re{z}}^n 1 (w \Vert x) \\
&\leq|f|_\rho \rho^{n+k} \lambda_{\Re{z}}^n.
\end{align*}
For the second term, we will use a similar approach as before. Decompose $\Gamma^n$ into $\Gamma^R \otimes \Gamma^M$ and decouple:
\begin{align*}
 \sum_{a\in\ad{w}{n}} &( e^{z S_n \tau( a \Vert w \Vert x)} -  e^{z S_n \tau( a \Vert w \Vert y)} ) f(a\Vert w \Vert y, g_a g)  \\
&= \sum_{a \in \ad{w}{M}} \sum_{b \in \ad{a}{R}} ( e^{z S_n \tau( b \Vert a \Vert w \Vert x)} -  e^{z S_n \tau(  b \Vert a \Vert w \Vert y)} ) f(b\Vert a \Vert w \Vert y ; g_b g_a g)  \\
&=\sum_{a \in \ad{w}{M}} \sum_{b \in \ad{a}{R}} ( e^{z S_n \tau( b \Vert a \Vert w \Vert x)} -  e^{z S_n \tau(  b \Vert a \Vert w \Vert y)} )  f(b\Vert x_b ; g_b g_a g)  \\
&{\color{white}.}\hspace{.8cm}+ O\left(|f|_\rho \rho^M \sum_{a \in \Gamma_x^n}  ( e^{z S_n \tau( a \Vert w \Vert x)} -  e^{z S_n \tau( a \Vert w \Vert y)} ) \right).
\end{align*}
For the error term, we estimate
\begin{align*}
 \sum_{a \in \Gamma_x^n}  ( e^{z S_n \tau( a \Vert w \Vert x)} -  e^{z S_n \tau( a \Vert w \Vert y)} )  
\leq  \sum_{a \in \Gamma_x^n} |e^{z S_n \tau(a \Vert w \Vert x)}| | 1 - e^{z (S_n \tau( a \Vert w \Vert x) - S_n ( a \Vert w \Vert y))}|.
\end{align*}
For the first term, we use the eigenvalue bound.  For the second, we use the fact that $\pi(a \Vert w \Vert x)$ and $\pi(a \Vert w \Vert y)$ are in the same $n+k$ cylinder combined with bounded distortion (i.e. $ |(S_n \tau( a \Vert w \Vert x) - S_n ( a \Vert w \Vert y))| < C 2^{-k}$.)  This gives
\begin{align*}
 \sum_{a \in \Gamma_x^n}  ( e^{z S_n \tau( a \Vert w \Vert x)} -  e^{z S_n \tau( a \Vert w \Vert y)} )  < C \lambda_{\Re{z}}^n (1+|\Im{z}|) 2^{-k}.
\end{align*}

\subsubsection{Applying Theorem~\ref{mu} Again}\

For each $b \in \Gamma_R$ define
$$
\mu_b := \sum_{a \in \adm{b}{w}{M}} (e^{\tau^n(b\Vert a \Vert w \Vert x)} - e^{\tau^n(b \Vert a \Vert w \Vert y)}) \delta_{\pi_q(g_a)}.
$$
The same proof as before follows through for $\mu_b$, as long as $R \asymp \log \GN{q}$, to yield the following for $\varphi \in E_q$:
$$
||\mu_b* \varphi||_2 < C\GN{q}^{-1/4} ||\mu_b||_1 ||\varphi||_2.
$$
Referring to the proof, we note that the $\mu_b$'s have slightly different coefficients in the corresponding $\eta_j$'s: 
$$
\eta_j := \sum_{a_j^{(4)}} (e^{z S_L (a_j \Vert a_{j+1}^{(L-4)} \Vert x_j)} - e^{z S_L (a_j \Vert a_{j+1}^{(L-4)} \Vert x_j)}) \delta_{\pi_q(a_j)}.
$$
However, the important property of `nearly flat coefficients' (i.e. that the constant for each summand varies by at most a constant ratio) is preserved.  Returning to the bound for $M_s^n$, we have
\begin{align*}
|M_s^n f |_\rho &\leq C \GN{q}^{-1/4} \sum_{ b \in \Gamma^R} ||\mu_b||_1 ||f||_\infty  + O(|f|_\rho \rho^M( \lambda_{\Re{s}}^n(1+|\Im z|)2^{-k} + \lambda_{\Re{s}}^{n+k})).
\end{align*}
For each $\mu_b$ we have
\begin{align*}
||\mu_b||_1 &\leq \sum_{a \in \Gamma_x^M} |e^{z S_n \tau(a \Vert w \Vert x)}| \cdot | 1 - e^{z (S_n \tau( a \Vert w \Vert x) - S_n ( a \Vert w \Vert y))}| \\ &< C \lambda_{\Re{z}}^M(1+|\Im{z}|) 2^{-k} |e^{z S_R(b \Vert x_b)}|.
\end{align*}

So finally we have
\begin{align*}
|M_s^n f|_\rho \leq &C\GN{q}^{-1/4} \lambda_{\Re{s}}^n ||f||_\infty (1+|\Im{z}|) 2^{-k} \\ &+ |f|_\rho \rho^M \lambda_{\Re{s}}^n(1+|\Im z|)2^{-k} + |f|_\rho \rho^M\lambda_{\Re{s}}^{n+k}.
\end{align*}
Recall also our bound for $||M_s^n f||_\infty$:
$$
||M_s^n f||_\infty \leq C \GN{q}^{-1/4} \lambda_{\Re{s}}^n \Vert f \Vert_\infty+ |f|_\rho \rho^R \lambda_{\Re{s}}^n.
$$
We assume $n>\log \GN{q}$ because we needed $R=c\log \GN{q}$ and $n=M+R$.  Thus, we have
\begin{align*}
|M_s^n f|_\rho &\leq C \lambda_{\Re{s}}^n \GN{q}^{-1/4} (1+|\Im{z}|) (|| f ||_\infty + \rho^{n/2} |f|_\rho) \\
||M_s^n f ||_\infty &\leq C \lambda_{\Re{s}}^n \GN{q}^{-1/4}(|| f ||_\infty + \rho^{n/2} |f|_\rho).
\end{align*}
Further, take $n \sim \log q + C \log(1 + |\Im{z}|)$ and we have
$$
||M_s^n f ||_\infty + \rho^{n/2} | M_s^n f|_\rho < \lambda_{\Re{s}}^n \GN{q}^{-1/4} ( ||f ||_\infty + \rho^{n/2} |f|_\rho).
$$
Iterating the inequality yields
$$
||M_s^{mn} f ||_\rho < \lambda_{\Re{s}}^{mn} \GN{q}^{-m/4} \GN{q}(1+|\Im{s}|) ||f||_\rho.
$$

\subsection{Fourier-Walsh Decomposition}\
\label{fw}

We would like to extrapolate from the previous bound (valid for $f \in E_q \subset L^2(\slt{q})$) a bound for any of the non-constant level subspaces.  In particular, recall that $L_2(SL_2(q)) = \bigoplus_{q' | q} E_{q'}$.  We temporarily denote $E_{q'}$ inside of $L_2(SL_2(q))$ as $E_{q'}(q)$ in order to compare $E_{q'}(q)$ with $E_{q'}(q')$.  This decomposition extends to one for $C(\Sigma_R \times \slt{q})$, namely
$$
C(\Sigma_R \times \slt{q}) = \bigoplus_{q'|q} C(\Sigma_R, E_{q'}).
$$

\begin{enumerate}

\item $M_{z,q}$ preserves the subspaces $C(\Sigma_R, E_{q'}) $ because 
$$
M_{z,q} f(x, \cdot) = \sum_{a \in \Gamma_x^1} e^{z \tau(a \Vert x)} f(a \Vert x, g_a \cdot)
$$
and $g \mapsto g_a g$ is an automorphism of $\slt{q}$ for each $a$.  So the right hand side is a linear combination of functions in  $C(\Sigma_R, E_{q'}) $.

\item The natural projection from $\slt{q}$ to $\slt{q'}$ extends to the subspaces $E_{q'}(q)$ and $E_{q'}(q')$.   In particular, $f \in E_{q'}(q)$ and the corresponding $\hat{f} \in E_{q'}(q')$ satisfy
$$
||f||_{L^2(q)} = \sqrt{\frac{|SL_2(q)|}{|SL_2(q')|}} ||\hat{f}||_{L^2(q')}
$$
and if we denote the norm $|| \cdot ||_{\rho, q'}$ on $\mathcal{F}_\rho( \Sigma_R, E_q')$ we have
$$
||f||_{\rho, q} \leq \sum_{q' | q} \sqrt{\frac{|SL_2(q)|}{|SL_2(q')|}} ||\hat{f}||_{\rho, q'}.
$$

\item  $\widehat{M_{z,q} (1 \otimes f)} = M_{z,q'} (1 \otimes \hat{f})$, i.e. the projection is equivariant under the appropriate congruence transfer operators.

\end{enumerate}
These three properties allow us to decompose a function $f = \sum_{q'|q} f_{q'}$ where $f_{q'} \in C(\Sigma_R \times E_{q'})$ and apply our bound as if $f_{q'}$ is in $\mathcal{F}_\rho(\Sigma, E_{q'}(q'))$.  Assume $f_1=0$, i.e. $f$ is orthogonal to the constant function. 

\subsubsection{Small Imaginary Part}\
For small imaginary part ($|\Im{s}|<\GN{q}$)
\begin{align*}
 \sum_{1\neq q' | q} &\sqrt{\frac{|SL_2(q)|}{|SL_2(q')|}} ||M^m_{z,q'} \hat{f_{q'}}||_{\rho,q'} \\
&\leq \sum_{1\neq q' | q} \sqrt{\frac{|SL_2(q)|}{|SL_2(q')|}} \lambda_{\Re{s}}^{m} \GN{q}^{-m/(4n)} \GN{q'}(1+|\Im{s}|) ||\hat{f_{q'}}||_{q'}  \\
&\leq  \sum_{1\neq q' | q} \sqrt{\frac{|SL_2(q)|}{|SL_2(q')|}} \GN{q'}^C e^{-\epsilon n} \lambda_{\Re{z}}^n ||\hat{f_{q'}}||_{q'} \\
&\leq \GN{q}^C e^{-\epsilon n} \lambda_{\Re{z}}^n ||f||_q
\end{align*}
where we used that $||\hat{f_{q'}}||_{q'} \leq ||f||_q$ and that the number of divisors of $q$ is at most $\GN{q}^{\epsilon'}$.

\subsubsection{Large Imaginary Part}\
For large imaginary part $(|\Im{s}|>\GN{q})$, we have
\begin{align*}
\sum_{1 \neq q' | q}& \sqrt{\frac{|SL_2(q)|}{|SL_2(q')|}} ||M^m_{z,q'} \hat{f_{q'}}||_{\rho,q'} \\
&\leq \sum_{1 \neq q' | q } \sqrt{\frac{|SL_2(q)|}{|SL_2(q')|}}\lambda_{\Re{s}}^{m} \GN{q}^{-m/(4n)} \GN{q'}(1+|\Im{s}|)  ||\hat{f}_{q'}||_{q'}  \\
&\leq \sum_{1 \neq q' | q } \sqrt{\frac{|SL_2(q)|}{|SL_2(q')|}}  |\Im{z}|^C e^{-\epsilon n \frac{\log \GN{q'}}{\log |\Im{z}|}} \lambda_{\Re{z}}^n ||\hat{f}_{q'}||_{q'} \\
&\leq ||f||_q |\Im{z}|^C \lambda_{\Re{z}}^n \sum_{1 < \GN{q'}<|\Im{z}|} e^{-\epsilon n \frac{\log \GN{q'}}{\log |\Im{z}|}}.
\end{align*}
To bound the sum, observe
\begin{align*}
\sum_{1 < \GN{q'}<|\Im{z}|} e^{-\epsilon n \frac{\log \GN{q}}{\log |\Im{z}|}} 
&< \prod_{p|q} \left(1 + \exp\left(\epsilon n \frac{-\log \GN{p}}{\log|\Im{z}|} \right)\right) -1\\
&\leq  \exp\left( \sum_{p} e^{-\epsilon n \frac{\log \GN{p}}{\log |\Im{z}|} }\right) - 1 \\
&< C \exp\left( \sum_{s=2}^\infty e^{-\epsilon n \frac{\log s}{\log |\Im{z}|} }\right) - 1 \\
&< e^{-\frac{\epsilon n}{\log |\Im{z}|}}.
\end{align*}

We have shown the following:
\begin{theorem}
\label{Mbound}
For $f$ orthogonal to the constant functions, there is some $\epsilon$ so that
$$
||M_s^n f||_\rho \leq\begin{cases}
\GN{q}^C e^{-\epsilon n} \lambda_{\Re{s}}^n||f||_\rho & |\Im{z}| \leq \GN{q} \\
|\Im{s}|^C e^{\frac{-\epsilon n}{\log | \Im{s}|} } \lambda_{\Re{s}}^n ||f||_\rho & |\Im{z}| \gg \GN{ q}
\end{cases}.
$$
\end{theorem}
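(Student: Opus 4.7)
The plan is to assemble Theorem~\ref{Mbound} from the per-subspace iterated bound
$$\|M_s^{mn} f\|_\rho < \lambda_{\Re{s}}^{mn} \GN{q}^{-m/4}\GN{q}(1+|\Im s|)\|f\|_\rho,$$
already derived for $f\in\mathcal{F}_\rho(\Sigma_R,E_q(q))$, via the Fourier--Walsh decomposition properties (1)--(3). Since $f$ is orthogonal to constants, decompose $f=\sum_{1\neq q'\mid q}f_{q'}$ with $f_{q'}\in C(\Sigma_R\times E_{q'})$, so the $q'=1$ component vanishes. By property (1), $M_{z,q}$ preserves each summand; by property (3), the projection intertwines $M_{z,q}$ with $M_{z,q'}$; and by property (2), each norm ratio costs only a factor of $\sqrt{|SL_2(q)|/|SL_2(q')|}$. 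Hence the task reduces to estimating
$$\|M_s^n f\|_\rho\leq \sum_{1\neq q'\mid q}\sqrt{\tfrac{|SL_2(q)|}{|SL_2(q')|}}\,\|M_{s,q'}^n\hat f_{q'}\|_{\rho,q'},$$
then applying the $E_{q'}(q')$-bound to each summand and summing over divisors.

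For the small-imaginary-part case $|\Im s|\leq \GN q$, I would choose the iteration parameters $n\sim \log\GN q$ and $m$ bounded below by an absolute constant, so the decay $\GN{q'}^{-m/4}$ absorbs both the $\sqrt{|SL_2(q)|/|SL_2(q')|}$ weight (of size $\GN{q/q'}^{3/2}$) and the polynomial loss $\GN{q'}(1+|\Im s|)\leq \GN q^C$, leaving an exponential saving $e^{-\epsilon n}$. The divisor sum contributes only $\GN q^{\epsilon'}$ by the standard divisor bound, giving the claimed $\GN q^C e^{-\epsilon n}\lambda_{\Re s}^n\|f\|_\rho$ estimate.

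For the large-imaginary-part case $|\Im s|\gg \GN q$, I would choose $n\sim \log|\Im s|$, so the per-summand bound becomes
$$|\Im s|^C\,e^{-\epsilon n \log\GN{q'}/\log|\Im s|}\,\lambda_{\Re s}^n\|\hat f_{q'}\|_{\rho,q'},$$
after balancing the $\GN{q'}^{-m/4}$ decay against $m$. The sum over divisors is then bounded by opening the product
$$\sum_{1\neq q'\mid q}e^{-\epsilon n \log\GN{q'}/\log|\Im s|}\leq \prod_{p\mid q}\bigl(1+e^{-\epsilon n\log\GN p/\log|\Im s|}\bigr)-1\leq \exp\!\Bigl(\sum_{p}e^{-\epsilon n\log\GN p/\log|\Im s|}\Bigr)-1,$$
and comparing the prime-indexed sum to $\sum_{s\geq 2} e^{-\epsilon n\log s/\log|\Im s|}$, which is a geometric-type series giving $O(e^{-\epsilon n/\log|\Im s|})$ for $n$ large.

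The main obstacle is precisely this last estimate in the large-$|\Im s|$ regime: one must choose the Birkhoff length $n$ and the iteration count $m$ in balance so that (i) the $\GN{q'}^{-m/4}$ savings genuinely control the $\sqrt{|SL_2(q)|/|SL_2(q')|}$ combinatorial factor uniformly in $q'\mid q$, and (ii) the outer divisor sum telescopes into a single factor of $\exp(-\epsilon n/\log|\Im s|)$ rather than something worse. Everything else is bookkeeping from the preceding subsections.
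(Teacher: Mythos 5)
Your proposal follows essentially the same route as the paper: the per-$E_q$ iterated bound $\|M_s^{mn}f\|_\rho < \lambda_{\Re{s}}^{mn}\GN{q}^{-m/4}\GN{q}(1+|\Im{s}|)\|f\|_\rho$ is fed through the Fourier--Walsh decomposition using the three properties (preservation of $C(\Sigma_R,E_{q'})$, the $\sqrt{|SL_2(q)|/|SL_2(q')|}$ norm comparison, and equivariance of the projection), with the small and large imaginary-part cases split exactly as in the paper, and the large-$|\Im{s}|$ divisor sum controlled by the same product-to-exponential comparison against $\sum_{s\geq 2}e^{-\epsilon n\log s/\log|\Im{s}|}$. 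This matches the paper's argument in both decomposition and the key final estimate.
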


Now, in order to find the region of analyticity of $(1-M_z)^{-1}$, we recall that $z \mapsto \lambda_z$ is holomorphic in a small neighborhood of $\delta_R$ and $\lambda_{\delta_R} = 1$.  In particular, this means there is some $\epsilon_2$ such that $\lambda_{\Re{z}}<e^{\epsilon/2}$ for all $\Re{z} \in [-\delta, -\delta + \epsilon_2]$.  We can also find $\epsilon_3$ so that for $z$ satisfying $\Re{z} \in [-\delta, -\delta+ \epsilon_3 \frac{1}{\log |\Im{z}|}]$ we have
$$
\lambda_{\Re{z}} < e^{\epsilon/2 \frac{1}{\log|\Im{z}|}}.
$$
For $z$ in both regions, we have that $(1-M_z)^{-1}$ (restricted to the space orthogonal to constant functions) is holomorphic and bounded by
$$
||(1-M_z)^{-1}||_\rho < (\GN{q}+|\Im{z}|)^C.
$$

\subsection{Fourier Analysis of the Renewal Function}\

Similar to the previous chapter, we introduce a counting function which satisfies a functional equation relating it to the resolvent of the congruence transfer operator $M_z$ on $\mathcal{F}_\rho(\Sigma_R \times \slt{q})$.  To incorporate the congruence aspect, we define
$$
N_q(X,a,g,f) := \sum_{n=0}^\infty \sum_{b \in \Gamma_a^n} g(b\Vert a) f(\pi_q(b)) {\bf 1}_{\{S_n \tau (b \Vert a) \leq X\}}.
$$
where $g$ is a function on $\Sigma_R$, $f$ is a function on $\slt{q}$, $a \in \Sigma_R$, and $X>0$.  The renewal equation is
$$
N_q(X,a,g,f) = \sum_{b \in \Gamma_a^1} N_q(X-\tau(b\Vert a),b\Vert a, g,\rho(\pi_q(b))f) + g(a)f(1) \ind{0 \leq X}
$$
where $\rho$ denotes the right regular representation of $\slt{q}$.  In particular
$$
\rho(\pi_q(b))f (z) = f(z \pi_q(b)).
$$
The Laplace transform
$$
F_q(s,a,g,f):=\int_{-\infty}^\infty e^{-st} N_q(t,a,g,f) dt
$$
satisfies
$$
sF_q(s,a,g,f) = (1-M_{s,q})^{-1} [g \otimes f] (a) .
$$
Observe that $N_q(X,a,g,f)$ is linear in $f$ and so is its Laplace transform.  The main contribution to $F_q(s,a,g,f)$ comes from the constant term and this analysis is a straightforward extension of the previous chapter since
$$
N_q(t,a,g,{\bf 1}) = N(t,a,g) .
$$
The contribution from functions orthogonal to constants is bounded using Theorem~\ref{Mbound}.

As in \cite{BGS}, we can choose a smooth nonnegative function $k$ on $\mathbb{R}$ such that $\int k = 1$, $\text{supp}(k) \subset [1,1]$ with the following bound for its Fourier transform
$$
|\hat{k}(\xi)| \leq C \exp ( - |\xi|^{1/2}).
$$
For some small parameter $\gamma$, we define
$$
k_\gamma(t) = \frac{1}{\gamma} k\left(\frac{t}{\gamma} \right).
$$
Note that $|\hat{k}_\gamma(\xi)| \leq C \exp(-|\gamma \xi|^{1/2})$.

Inserting the smoothing function gives
$$
\int_{-\infty}^\infty k_\gamma (t) N_q(X+t, a, f) dt = \frac{1}{2 \pi i} \int_{\delta_R + i \mathbb{R}} e^{Xs} \hat{k}_\gamma(s)  (I-M_s)^{-1} ds.
$$

Theorem~\ref{Mbound} allows us to shift the contour by
$$
-\delta_R+it \mapsto -\delta_R + \frac{\epsilon}{2} \min\left(1, \frac{1}{\log(1+|t|)}\right) + it
$$
and gives
\begin{theorem}
\label{eqbound}
For $\varphi \in L^2(\slt{q})$ with $\langle \varphi, 1 \rangle = 0$,
$$\left| \int_{-\gamma/2}^{\gamma/2} k_\gamma (t) N(T+t, \varphi, a) dt \right| < \GN{q}^{C} \gamma^{-C}  \exp\left(\min\left(1,{\frac{1}{\log \frac{T}{\gamma}}}\right)\right)e^{T\delta} ||\varphi||_2.$$
\end{theorem}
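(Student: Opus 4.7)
The plan is to apply Fourier/Laplace inversion to the functional equation $s F_q(s,a,g,\varphi) = (I-M_{s,q})^{-1}[g \otimes \varphi](a)$ established earlier in this section, shift the contour into the analyticity region supplied by Theorem~\ref{Mbound}, and close by bounding the resulting integral using the rapid decay of $\hat{k}_\gamma$. This is essentially a Tauberian-via-contour-shift argument, adapted to the twisted transfer operators.

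\textbf{Step 1 (Contour representation).} Mellin inversion of the Laplace transform, together with insertion of $k_\gamma$, gives for any $c > \delta_R$ the identity
$$\int_{-\gamma/2}^{\gamma/2} k_\gamma(t)\, N(T+t,\varphi,a)\, dt = \frac{1}{2\pi i}\int_{c+i\mathbb{R}} e^{Ts}\, \hat{k}_\gamma(-is)\, \frac{(I-M_{s,q})^{-1}[1\otimes \varphi](a)}{s}\, ds.$$
This reduces the problem to estimating the right-hand integral.

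\textbf{Step 2 (Shift the contour).} Because $\varphi$ is orthogonal to the constant functions, Theorem~\ref{Mbound} (together with the remarks following it on the region of analyticity of $(I-M_z)^{-1}$) yields that $s \mapsto (I-M_{s,q})^{-1}[1\otimes \varphi](a)$ is holomorphic throughout the broken strip
$$\mathcal{R} := \left\{ s : \delta_R - \tfrac{\epsilon}{2}\min\!\left(1,\tfrac{1}{\log(1+|\Im s|)}\right) \leq \Re s \leq c \right\},$$
with norm controlled by $(\GN{q}+|\Im s|)^C$. I would deform the contour from $\Re s = c$ onto the curve $\mathcal{C}$ forming the left boundary of $\mathcal{R}$. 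No residues are crossed (the potential pole of $1/s$ at $s=0$ lies far to the left of $\Re s = \delta_R$ and is not encountered, or is harmlessly bypassed by a small indentation).

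\textbf{Step 3 (Estimate on the shifted contour).} On $\mathcal{C}$ three ingredients compete. First, the exponential factor contributes
$$|e^{Ts}| = e^{T\delta_R}\exp\!\left(-\tfrac{\epsilon T}{2}\min\!\left(1,\tfrac{1}{\log(1+|\Im s|)}\right)\right),$$
which is the source of the claimed $T$-savings. Second, the resolvent contributes $(\GN{q}+|\Im s|)^C\|\varphi\|_2$. Third, the sub-exponential decay $|\hat{k}_\gamma(\xi)| \ll \exp(-|\gamma \xi|^{1/2})$ localizes the integral to $|\Im s| \ll \gamma^{-1}(\log\gamma^{-1})^2$ and ensures absolute convergence. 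Splitting $\mathcal{C}$ into the low-frequency part $|\Im s| \leq \GN{q}$, where the savings factor is $e^{-\epsilon T/2}$, and the high-frequency part $|\Im s| > \GN{q}$, where it is $\exp(-\epsilon T/(2\log|\Im s|))$, integrating against $\hat{k}_\gamma$, and using $\int(\GN{q}+|\Im s|)^{C}\exp(-(\gamma|\Im s|)^{1/2})\, d|\Im s| \ll \GN{q}^{C}\gamma^{-C'}$ combine to produce the bound stated in the theorem.

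\textbf{Main obstacle.} The chief subtlety is coordinating the three scales $\GN{q}$, $|\Im s|$, and $\gamma^{-1}$ so that the contour stays inside $\mathcal{R}$ while the Fourier decay of $\hat{k}_\gamma$ still provides enough room. The delicate regime is $|\Im s| \gg \GN{q}$, where the analytic strip has width $\epsilon/\log|\Im s|\to 0$ and the polynomial resolvent factor $|\Im s|^{C}$ threatens to swamp the shrinking savings $\exp(-\epsilon T/(2\log|\Im s|))$; it is precisely for this reason that the sub-exponential decay $\exp(-|\gamma\xi|^{1/2})$ of $\hat{k}_\gamma$, rather than mere polynomial decay, is needed to close the estimate.
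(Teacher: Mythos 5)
Your proposal is correct and takes essentially the same route as the paper: the paper likewise inserts the smoothing kernel $k_\gamma$, represents the smoothed count as a contour integral of $e^{Ts}\hat{k}_\gamma(s)(I-M_{s,q})^{-1}$ coming from the functional equation $sF_q(s,a,g,f)=(1-M_{s,q})^{-1}[g\otimes f](a)$, shifts the contour by $\frac{\epsilon}{2}\min\left(1,\frac{1}{\log(1+|t|)}\right)$ into the analyticity region furnished by Theorem~\ref{Mbound}, and lets the sub-exponential decay of $\hat{k}_\gamma$ absorb the polynomial resolvent growth $(\GN{q}+|\Im s|)^{C}$. Your Steps 1--3, including the split at $|\Im s|\asymp\GN{q}$, reproduce this argument.
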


\subsection{The Finite Renewal Function}\

As in the previous chapter, the geodesic count comes from an analysis of a lattice point counting function which is close to our renewal function $N_q$ for large sequences.  The strategy is analogous to \cite{Lalley} or \cite{MOW}.

We define
$$
N_q^*(X,a,g,f):= \sum_{n=0}^\infty \sum_{b \in \Gamma_a^n} g(b\Vert a) f(\pi_q(b)) {\bf 1}_{\{S_n \tau^* (b \Vert a) \leq X\}}
$$
where $a \in \Sigma_R^* \cup \Sigma$ (see Section~\ref{finren} for the definition of the space $(\Sigma^* \cup \Sigma, \sigma)$  and $\tau*$.)   To count geodesics with congruence conditions, it suffices to provide an asymptotic for
\begin{align*}
N^*(T, \emptyset, {\bf 1}, \delta_{\pi_q(g)}) &= \#\{ b \in \Sigma_R^* : g_b \equiv g \bmod q, d(j, g_b j) < T\} \\
&= \#\{ b \in \Sigma_R^* : g_b \equiv g \bmod q, ||g_b||< \sqrt{2 \cosh T}\} .
\end{align*}

Iterating the finite renewal equation yields
\begin{align*}
N^*(X,a,g,f) = &\sum_{b \in \Gamma_a^n } N^*(X-S_n\tau^*(b), b,g,\rho(\pi_q(b))f) \\
&+\sum_{k=1}^{n-1} \sum_{b \in \Gamma_a^k} g(b\Vert a)f(\pi_q(b)) \ind{S_k \tau^*(b) \leq X} + g(a)f(1)\ind{X \geq 0}.
\end{align*}
As $X \to \infty$, the second line does not change.   For each summand corresponding to $b \in \Gamma_a^n$ in the first line we can find $b' \in \Sigma_R$ to sandwich between the two terms
$$
N_q(T \pm 2^{-n+k} -S_n \tau^*(b'), b',g,\rho(\pi_q(b))f) \asymp N^*_q(T-S_n \tau^*(b), b,g,\rho(\pi_q(b))f) .
$$
So, it suffices to analyze  $N_q(T \pm 2^{-n+k} -S_n \tau^*(b'), b',g,\rho(\pi_q(b)) \ind{\pi_q(g)})$ (sending $n \to \infty$ to get the final theorem).  Let $\varphi = \delta_{\pi_q(g)}$ and write
$$
\varphi = \frac{\langle \varphi, 1 \rangle}{|\slt{q}|} 1 + \varphi'
$$
where $\langle \varphi', 1 \rangle = 0$.   Then we have
\begin{align*}
\int_{-\infty}^\infty k_\lambda(t) N(T+t, a, \varphi) dT =&\frac{\langle \varphi, 1 \rangle}{|\slt{q}|}  \frac{1}{2\pi i} \int_{\delta+i\mathbb{R}} e^{as} F(s,a, 1) \hat{k}\lambda(s) ds \\
&+  \frac{1}{2\pi i} \int_{\delta+i\mathbb{R}} e^{as} F(s,a, \varphi') \hat{k}\lambda(s) ds.
\end{align*}
Observe $N_q(T,a,1) = N(T,a)$ where $N(T,a)$ is the renewal function defined in the previous section.  We established in the previous section that
$$
N(T-O(2^{-N}), b) \leq N^*(T,a) \leq N(T+O(2^{-N}), b)
$$
when $a$ and $b$ are in the same $N$-cylinder.   So the first term is $N^*(T,a) + O(e^{T(\delta-c)})$.   Since $N_q(T,a,g,f)$ is increasing in $T$, we have
$$
N_q(T-\gamma,a,g,f) \leq \int_{-\gamma}^\gamma k_\gamma(t) N_q(T+t,a,g,f) dt \leq N(T+\gamma,a,g,f).
$$
Sending $\gamma \to 0$ and appealing to Theorem~\ref{eqbound} gives
$$
N_q(T,a,g,\varphi') \ll  \GN{q}^{C} (e^{ -T \epsilon_3} + e^{\frac{1}{\log T}})e^{T\delta_R}.
$$
After renaming constants, we have shown Theorem~\ref{affinesieve} from the introduction:
\begin{theorem}
For each $R>8$, there is some absolute spectral gap $\Theta_R>0$ and absolute constants $c_R, C_R>0$ such that for all square-free $q \in \Zi$ and $\omega \in \slt{q}$ we have the estimate
$$
\left| \#\{g \in \Gamma_R \cap B_X : g \equiv \omega \bmod q\} - \frac{\#(\Gamma_R \cap B_X)}{|\slt{q}|} \right| \ll_R \#(\Gamma_R \cap B_X) E(q,X)
$$
as $X \to \infty$, where 
$$
E(q,X) = \begin{cases} e^{-c_R \sqrt{\log X}} & \GN{q} < C_R \log X \\ \GN{q}^{C_R} X^{-\Theta_R} & \GN{q}> C_R\log X \end{cases}.
$$
\end{theorem}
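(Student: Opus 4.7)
Since the spectral machinery has already been assembled, the plan is to bundle it into a sharp asymptotic for the congruence count. First I would rewrite the left-hand side as a value of the finite renewal function with congruence weight,
\[
\#\{g \in \Gamma_R \cap B_X : g \equiv \omega \bmod q\} = N_q^*(T,\emptyset,\mathbf{1},\delta_\omega),
\]
with $T = 2\log X + O(1)$, using the bijection between primitive aperiodic words in $\Sigma_R^*$ and primitive hyperbolic conjugacy classes together with the identity $2\cosh d(j,g.j)=\|g\|^2$ that converts a norm-ball count into a length-ball count.

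Next I would decompose $\delta_\omega = |\slt{q}|^{-1}\mathbf{1} + \varphi'$ with $\varphi' \perp \mathbf{1}$. Linearity in the last slot splits $N_q^*$ into a main term $|\slt{q}|^{-1} N^*(T,\emptyset,\mathbf{1})$, which the non-congruence finite renewal theorem of Section~\ref{lall} identifies with $|\slt{q}|^{-1}\#(\Gamma_R \cap B_X)$ up to a relative error of $o(1)$, plus a remainder governed entirely by $\varphi'$. All the nontrivial work is to bound this remainder.

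To control the remainder I would run the Laplace-transform / contour-shift argument set up in this chapter. The resolvent identity
\[
s F_q(s,\emptyset,\mathbf{1},\varphi') = (I - M_{s,q})^{-1}(\mathbf{1}\otimes\varphi')(\emptyset)
\]
expresses the smoothed remainder as a contour integral on $\Re s = \delta_R$, and Theorem~\ref{Mbound} together with the Fourier--Walsh patching certifies that $(I-M_{s,q})^{-1}$, restricted to the orthogonal complement of the constants, extends holomorphically to the slanted strip
\[
\Re s \ \geq\ \delta_R - \epsilon\,\min\!\bigl(1,\ \tfrac{1}{\log(1+|\Im s|)}\bigr)
\]
with polynomial growth in $\GN{q}$ and $|\Im s|$. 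Pushing the contour into this zero-free strip and using $|\hat k_\gamma(\xi)| \ll \exp(-|\gamma\xi|^{1/2})$ to absorb the polynomial growth in $|\Im s|$ delivers Theorem~\ref{eqbound}. The two regimes of $E(q,X)$ then correspond to how far the contour can be pushed: when $\GN{q} < C_R\log X$ the curved boundary lets one reach $\Re s = \delta_R - c/\sqrt{\log X}$, producing the $e^{-c_R\sqrt{\log X}}$ saving; when $\GN{q}$ is larger one balances the polynomial growth $\GN{q}^C$ against a shift by a fixed $\Theta_R > 0$, producing $\GN{q}^{C_R} X^{-\Theta_R}$.

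Finally, I would pass from the smoothed integral back to $N_q$ via the monotonicity sandwich $N_q(T-\gamma) \leq \int k_\gamma N_q \leq N_q(T+\gamma)$ with $\gamma$ optimized against the error, and from $N_q$ to $N_q^*$ via the bounded-distortion sandwich (Lemma~\ref{bd}) already used for the trivial character. Converting $T$ back to $X$ via $\|g\|^2 = 2\cosh d(j,g.j)$ yields the stated bound. The main obstacle is genuinely the spectral estimate on $M_{s,q}$ in the large-$|\Im s|$ regime, but this has been secured through the Selberg $3/16$ input (Lemma~\ref{gen} plus Theorem~\ref{flat}) and the Fourier--Walsh decomposition, so the present theorem reduces to carefully assembling these pieces.
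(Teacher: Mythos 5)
Your plan follows the paper's own proof essentially step for step: expressing the congruence count as $N_q^*(T,\emptyset,\mathbf{1},\delta_\omega)$, splitting $\delta_\omega$ into its constant and orthogonal parts, handling the main term by the non-congruence renewal asymptotic, and bounding the remainder via the resolvent identity, Theorem~\ref{Mbound} with the Fourier--Walsh patching, the smoothed contour shift of Theorem~\ref{eqbound}, and the monotonicity and bounded-distortion sandwiches. This is the same route as the paper, correctly assembled.
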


\section{The Growth Parameter $\delta_R$}
\label{delta}

Recall from Chapter~\ref{lall} that we proved the following asymptotic
$$\# \Gamma_R \cap B_X \asymp X^{2\delta_R}.$$
The exponent $\delta_R$ was the unique solution to the pressure equation
$$
P(s) =  \log \lambda_s = 0.
$$
In other words, the function $s \mapsto \lambda_s$ (where $\lambda_s$ is the maximal eigenvalue of $\mathcal{L}_s$ on $s \in \mathbb{S}$) is strictly decreasing in $s$ and $\lambda_{\delta_R}=1$.

We will make use of the fact that $\delta_R$ is arbitrarily close to $2$ as $R \to \infty$ in a later section.  Therefore, we now show that $\delta = \delta(R) \to 2$ as $R \to \infty$.  In order to proceed, we need to consider the action of the transfer operators on a different space where it becomes easier to compare $\mathcal{L}_{s,R}$ for varying $R$.

Previously, we have considered transfer operators on the subshift of finite type $(\Sigma_R, \sigma)$.  In order to show that $\delta_R \to 2$ as $R \to \infty$, we will need to compare the dynamics in $(\Sigma_R, \sigma)$ to that of the subshift on a countable alphabet $(\Sigma, \sigma)$.  

Recall the set of admissible words in the countably infinite alphabet $\mathcal{P}$ is
$$
\Sigma := \{ (p_1, p_2, \ldots) : p_i \in \mathcal{P}, f(p_i) \supset p_{i+1} \}.
$$
We work in the space of H\"{o}lder continuous functions $\mathcal{F}_\rho (\Sigma)$ with the norm
$$
||\cdot ||_\rho = ||\cdot ||_\infty + |\cdot|_\rho.
$$
Also let $\mathcal{F}_\infty(\Sigma)$ be the space of continuous functions endowed with the sup norm.  Consider the infinite transfer operator
\begin{align*}
\mathcal{L}_{s,\infty} g(x) := \sum_{y \in \Sigma: \sigma(y)=x} e^{-s \tau(y)} g(y)
\end{align*}
where the distortion function $\tau$ is the same as before: $\tau(y) = \log |f'(\pi(y))|$.  In order to establish that $\mathcal{L}_{s,\infty}$ is a bounded linear operator, we need that $s \tau$ is summable, i.e.
$$
\sum_{y \in E^1} \left( \sup_{x \in \Sigma}{e^{-s \tau(y \Vert x)}} \right) < \infty.
$$
In the interior of each one-cylinder, $f$ is defined as $f(z) = (-1)^k (1/z - \lfloor 1/z \rceil )$ and so $|f'(z)| = 1/|z|^2$.  This bounds our weights by
$$
|e^{-s\tau(y\Vert x)} | \leq |\pi(y \Vert x)|^{2 \Re{s}}.
$$
For each lattice point in $\Zi$ falling in $\mathcal{X}$, we have between two and six adjacent one-cylinders.  This gives
$$
\sum_{y \in E^1} \left( \sup_{x \in \Sigma}{e^{-s \tau(y \Vert x)}} \right) < 6\sum_{\substack{z \in \Zi\\ |z|\geq 2, \Im{z}\geq 0}}  \sup_{\substack{w\\ |w-z|<1/\sqrt{2}}} |w|^{-2\Re{s}} < C \sum_{z \in \Zi / \{0\}} |z|^{-2 \Re{s}}.
$$
So, for $s \tau$ to be summable, it suffices that $\Re{s}>1.$

In place of the irreducible and periodic properties of the finite subshifts $(\Sigma_R, \sigma)$, we must now have that $(\Sigma, \sigma)$ is finitely primitive.  In other words, there exists some $k$ and finite subset $W \subset E^k$ such that for each $i, j \in E^1$ there is some $\omega \in W$ such that $i \omega j$ is admissible.  This is clear from the proof we provided for the irreducibility of $(\Sigma_R, \sigma)$.  We divided the one-cylinders into eight regions, each containing a full square.  The image of each under $f$ contained two squares which in turn maps to the full region.  Thus, we can choose $W \subset E^2$ of size $16$.  

With these properties, we may apply the Perron-Frobenius theory for subshifts on a countable alphabet.  See \cite{mauldinurb} or \cite{mauldinurbbook} for proof of the following theorem
\begin{theorem} For the infinite transfer operator $\mathcal{L}_{s,\infty}$, as long as $\Re{s}>1$, we have
\begin{enumerate}
    \item The spectral radius of $\mathcal{L}_s$ acting on either $\mathcal{F}_\infty$ or $\mathcal{F}_\rho$ is $\lambda_{s,\infty} = e^{P(s\tau)}$
    \item $\lambda_{s,\infty}$ is a simple eigenvalue and has a corresponding eigenfunction $h_{s,\infty} \in \mathcal{F}_\rho \subset \mathcal{F}_\infty$ which is positive.
    \item The remainder of the spectrum on $\mathcal{F}_\rho$ is in a disc centered at $0$ with radius strictly smaller than $\lambda_{s,\infty}$.
\end{enumerate}
where $P(s \tau)$ denotes the pressure function:
$$
P(s\tau) := \lim_{n \to \infty} \frac{1}{n} \log \sum_{\omega \in E^n} \left( \sup_{x \in \Sigma} e^{s S_n \tau(\omega \Vert x)} \right).
$$
\end{theorem}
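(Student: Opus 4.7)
The plan is to verify the hypotheses of the Mauldin-Urba\'nski Perron-Frobenius theory for shifts on a countable alphabet and then invoke their theorem. Three ingredients are needed: uniform summability of the weights, finite primitivity of the shift, and bounded distortion for the potential. Summability is exactly what the preceding paragraph established, namely $\sum_{y\in E^1}\sup_{x}e^{-s\tau(y\Vert x)}<\infty$ for $\Re(s)>1$, via the comparison with $\sum_{z\in\Zi\setminus\{0\}}|z|^{-2\Re(s)}$. Finite primitivity with a witness set $W\subset E^2$ of cardinality $16$ is also in hand. Bounded distortion is Lemma~\ref{bd}, and combined with $|f'|\leq 1/2$ on cylinders it makes $\tau$ locally H\"older with $\rho=1/2$.

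With these hypotheses in place, I would first confirm that $\mathcal{L}_{s,\infty}$ is a bounded operator on both $\mathcal{F}_\infty$ and $\mathcal{F}_\rho$. On $\mathcal{F}_\infty$ this is immediate from summability; on $\mathcal{F}_\rho$ one combines summability with bounded distortion to control $|\mathcal{L}_{s,\infty} g|_\rho$ in terms of $\|g\|_\rho$. To identify the spectral radius with $e^{P(s\tau)}$, apply $\mathcal{L}_{s,\infty}^n$ to the constant function $1$ and sandwich
\begin{equation*}
\sum_{\omega\in E^n}\inf_x e^{s S_n \tau(\omega\Vert x)} \;\leq\; \mathcal{L}_{s,\infty}^n 1(x) \;\leq\; \sum_{\omega\in E^n}\sup_x e^{s S_n \tau(\omega\Vert x)}.
\end{equation*}
Bounded distortion makes these two sides comparable up to a universal constant, so that $\lim_n \tfrac{1}{n}\log\|\mathcal{L}_{s,\infty}^n 1\|_\infty = P(s\tau)$, and on $\mathcal{F}_\rho$ the same limit is obtained after an additional routine application of bounded distortion.

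For the lead eigenfunction, I would normalize and extract a subsequential limit of $\lambda_{s,\infty}^{-n}\mathcal{L}_{s,\infty}^n 1$; bounded distortion forces equi-H\"older bounds on the iterates, so Arzel\`a-Ascoli produces a limit $h_{s,\infty}\in\mathcal{F}_\rho$ which is positive because the operator preserves positivity and finite primitivity prevents the limit from vanishing on any cylinder. The accompanying conformal measure $\mu_{s,\infty}$ (the eigenmeasure of the dual operator) is constructed by the standard Schauder-Tychonoff fixed-point argument applied to $\mathcal{L}_{s,\infty}^*/\lambda_{s,\infty}$. Simplicity follows because any other positive eigenfunction would yield a non-trivial invariant decomposition, forbidden by finite primitivity.

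The hardest step is the spectral gap. For this I would establish a Lasota-Yorke / Doeblin-Fortet inequality of the form
\begin{equation*}
|\mathcal{L}_{s,\infty}^n g|_\rho \leq C \rho^n \lambda_{s,\infty}^n |g|_\rho + B_n \|g\|_\infty,
\end{equation*}
using bounded distortion together with the uniform contraction $|f'|\leq 1/2$. The main obstacle is ensuring that $B_n$ is controlled uniformly despite the infinite alphabet; this requires the summability to be strict (hence the hypothesis $\Re(s)>1$ rather than a marginal one), so that tails $\sum_{\omega\in E^n,\,|\pi(\omega)|\text{ large}}\sup_x e^{s S_n \tau(\omega\Vert x)}$ decay geometrically. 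Once this inequality is in place, quasi-compactness of $\lambda_{s,\infty}^{-1}\mathcal{L}_{s,\infty}$ on $\mathcal{F}_\rho$ follows from Ionescu-Tulcea-Marinescu (using relative compactness of H\"older unit balls in the sup norm on each finite union of one-cylinders, plus a diagonal tail argument), and the spectral gap is then standard.
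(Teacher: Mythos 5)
Your proposal is correct and follows essentially the same route as the paper: verify summability of the weights for $\Re(s)>1$ (via comparison with $\sum_{z\in\Zi\setminus\{0\}}|z|^{-2\Re(s)}$), verify finite primitivity with a witness set $W\subset E^2$ of size $16$, and then invoke the Mauldin--Urba\'nski Perron--Frobenius theory for countable-alphabet shifts, which is exactly what the paper does by citation. Your additional sketch of the internal proof of the cited theorem (Lasota--Yorke inequality, Arzel\`a--Ascoli for the eigenfunction, conformal measure, Ionescu-Tulcea--Marinescu) is a standard and reasonable outline of that reference's argument, but it goes beyond what the paper itself supplies.
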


Note that the topological pressure $P(s\tau)$ is increasing in $s$ and there is a unique 0.   Combining \cite{sullivan} with \cite{series}, we have that this value is the critical exponent $\delta(\Gamma)$ which is equal to the Hausdorff dimension of the limit set of $\Gamma$ which is 2 (since the limit set has non-zero Lebesgue measure).  Thus, $\lambda_{2, \infty}=1$.

Information about $\lambda_{s,R}$ will follow from Keller-Liverani Perturbation Theorem (see Appendix A of \cite{pollicotturb} or \cite{kliverani} for proof.)  

We will work in the setting of the two norms $|| \cdot ||_\infty \leq ||\cdot||_\rho$ in the Banach space $\mathcal{F}_\rho(\Sigma)$.  Define
$$
|||Q|||:= \sup\{ ||Qf||_\infty : ||f||_\rho \leq 1\}
$$
and consider a family of operators $P_k$ compared to some `limit operator' $P$.  The next theorem will require the four following conditions:
\begin{enumerate}
    \item There are $C, M>0$ such that for all $k, n \in \mathbb{N}$, $$||P_k^n||_\infty \leq C M^n$$.
    \item There are $C_1, C_2, \alpha>0$ such that for all $n,k \in \mathbb{N}$, $$||P_k^n f ||_\rho \leq C_2 \alpha^n ||f||_\rho + C_2 M^n ||f||_\infty$$.
    \item If $z \in \sigma(P_n) \cap \overline{B}^c(0,\alpha)$, then $z$ is not in the residual spectrum of $P_n$.
    \item $|||P_k-P||| \to 0$ as $k \to \infty$.
\end{enumerate}
Although the full Keller-Liverani Perturbation Theorem provides more refined information about the spectrum of $P_k$ and $P$, we only need the following 
\begin{theorem}
Assume the family $\{P_k\}_{k \in \mathbb{N}}$ and $P$ satisfy conditions (1)-(4) above.  If $\lambda$ is a simple, isolated eigenvalue of $P$, then for every sufficiently large $k$, there exists a unique simple eigenvalue $\lambda_k$ of $p_k$ such that
$$
\lim_{k \to \infty} \lambda_k = \lambda.
$$
\end{theorem}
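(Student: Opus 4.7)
The plan is to follow the Keller--Liverani strategy: upgrade the weak-norm convergence $|||P_k-P|||\to 0$ to convergence of resolvents in a neighbourhood of the isolated eigenvalue $\lambda$, and then compare spectral projections by contour integration.

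The first and main step is to establish, for each $z$ in the resolvent set of $P$ with $|z|>\alpha$ and distance $\geq\eta>0$ from $\sigma(P)$, the uniform bound $\sup_k \|(z-P_k)^{-1}\|_\rho \leq C_{\eta}$ together with the convergence $|||(z-P_k)^{-1}-(z-P)^{-1}|||\to 0$. The engine is the resolvent identity
$$
(z-P_k)^{-1}-(z-P)^{-1} = (z-P)^{-1}(P_k-P)(z-P_k)^{-1},
$$
iterated a bounded number of times so as to decouple the unknown $(z-P_k)^{-1}$ on the right. The Lasota--Yorke inequality (2) lets one trade the $\rho$-norm for the $\infty$-norm at the cost of a contractive remainder, while (1) controls the $\infty$-norm and (4) provides smallness in $|||\cdot|||$. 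This interleaving is the main obstacle: one has to balance (2) against (4) without picking up spurious spectrum of $P_k$ accumulating near $\sigma(P)$, and the Neumann series only closes once the a priori bound on $\|(z-P_k)^{-1}\|_\rho$ is in place.

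With Step 1 in hand, since $\lambda$ is a simple isolated point of $\sigma(P)$, I would pick a small circle $\gamma\subset\mathbb{C}$ enclosing $\lambda$, contained in the resolvent set of $P$ and strictly outside $\overline{B}(0,\alpha)$. Define the spectral projections
$$
\Pi = \frac{1}{2\pi i}\oint_\gamma (z-P)^{-1}\,dz, \qquad \Pi_k = \frac{1}{2\pi i}\oint_\gamma (z-P_k)^{-1}\,dz.
$$
By Step 1, $\gamma$ lies in the resolvent set of $P_k$ for all large $k$, so $\Pi_k$ is well defined; the same step yields $|||\Pi_k-\Pi|||\to 0$ while $\|\Pi_k\|_\rho$ stays uniformly bounded.

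Finally, I would compare ranks. Since $\Pi$ has rank one, the uniform $\|\cdot\|_\rho$-bound together with $|||\Pi_k-\Pi|||\to 0$ forces $\mathrm{rank}(\Pi_k)\geq 1$ eventually (because $\Pi_k$ approximates $\Pi$ strongly on the one-dimensional image of $\Pi$), and the reverse inequality follows from observing that $\Pi\Pi_k\to\Pi$ on the same image, so any additional dimension in $\mathrm{Im}(\Pi_k)$ would collapse in the limit. Condition (3) then rules out residual spectrum inside $\gamma$, so the spectrum of $P_k$ inside $\gamma$ consists of a single simple eigenvalue $\lambda_k$. Letting the radius of $\gamma$ shrink to zero gives $\lambda_k\to\lambda$, which is the desired conclusion. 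Applied to $P=\mathcal{L}_{s,\infty}$ and $P_k=\mathcal{L}_{s,R_k}$ at $s=2$ where $\lambda_{2,\infty}=1$ is simple and isolated, this yields $\lambda_{2,R}\to 1$ and hence $\delta_R\to 2$ in the intended application.
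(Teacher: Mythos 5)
First, note that the paper does not prove this statement at all: it is the Keller--Liverani perturbation theorem, quoted as a black box with a pointer to \cite{kliverani} and Appendix A of \cite{pollicotturb}. So your proposal is being measured against the literature proof, and while your outline follows the right overall strategy (uniform resolvent control outside the disc of radius $\alpha$, spectral projections by contour integration, rank comparison), the central step is asserted rather than proved, and the route you indicate for it would not work as stated. Iterating the resolvent identity $(z-P_k)^{-1}-(z-P)^{-1}=(z-P)^{-1}(P_k-P)(z-P_k)^{-1}$ cannot get you started, because every term still contains the unknown $(z-P_k)^{-1}$: its very existence and a uniform bound are exactly what must be established, and $P_k-P$ is small only in the mixed norm $|||\cdot|||$ (from $\|\cdot\|_\rho$ to $\|\cdot\|_\infty$), not as an operator on $(\mathcal{F}_\rho,\|\cdot\|_\rho)$, so no Neumann series in the strong norm closes. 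You flag this yourself as ``the main obstacle,'' but flagging it is not resolving it. The actual argument avoids the circularity by exhibiting an explicit approximate inverse, e.g. $R_{k,n}(z)=\sum_{j=0}^{n-1}z^{-(j+1)}P_k^{j}+z^{-n}P_k^{n}(z-P)^{-1}$, for which $(z-P_k)R_{k,n}(z)=I+z^{-n}P_k^{n}(P-P_k)(z-P)^{-1}$; the error is made small in $\|\cdot\|_\rho$ by first choosing $n$ so that $(\alpha/|z|)^{n}$ is small (condition (2), using $|z|>\alpha$) and then $k$ large so that $M^{n}|||P_k-P|||$ is small (conditions (1) and (4)), and condition (3) is then what upgrades this approximate invertibility to $z\in\rho(P_k)$ by excluding the remaining spectral possibilities. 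None of this machinery appears in your Step 1, and it is the heart of the theorem.

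The rank comparison also has a gap. You only obtain $|||\Pi_k-\Pi|||\to 0$, i.e. convergence measured from $\|\cdot\|_\rho$ into $\|\cdot\|_\infty$, so the standard argument ``$\|\Pi_k-\Pi\|<1$ implies equal ranks'' is unavailable, and ``any additional dimension in $\mathrm{Im}(\Pi_k)$ would collapse in the limit'' is not a proof: a unit vector $f_k\in\mathrm{Im}(\Pi_k)\cap\ker\Pi$ satisfies $\|f_k\|_\infty=\|(\Pi_k-\Pi)f_k\|_\infty\to 0$ while $\|f_k\|_\rho=1$, which by itself is no contradiction. One must invoke the Lasota--Yorke inequality (2) once more, together with the spectral localization of $P_k$ on $\mathrm{Im}(\Pi_k)$ (spectrum of modulus bounded away from $\alpha$), to force $\|f_k\|_\rho\to 0$ and reach the contradiction. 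With these two repairs your outline coincides with the cited proof; as written, the two decisive steps are missing.
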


We will apply the theorem to the following family: $\mathcal{L}_{s,R} f := \mathcal{L}_{s,R} [{\bf 1}_{E_R^1} f]$.
In other words, we'll only sum over the one-cylinders in $\Sigma_R$.  Conditions (1) and (2) follow from $||L_{s,R}||_\infty \leq ||L_{s,\infty}||_\infty$ and $||L_{s,R}||_\infty \leq ||L_{s,\infty}||_\infty$.  Condition (3) is automatically satisfied by our choice of $||\cdot||_\infty$ in $\mathcal{F}_{\rho}(\Sigma)$.  In particular, the unit ball in $\mathcal{F}_{\rho}(\Sigma)$ is $||\cdot||_\infty$-compact by Ascoli's theorem (see \cite{parrypoll}.)  Therefore, we must now only establish property (4).

\subsubsection{Perturbation estimates for large alphabets}\
Assuming $\Re{s}>1$, we have
    \begin{align*}
    ||(L_{s, \infty} - L_{s,R})f(x)||_\infty &= \left\Vert \sum_{y \in E^1  \setminus E_R^1} e^{s \tau(y \Vert x)} f(y \Vert x) \right\Vert_\infty \\
    &< C ||f||_\infty \sum_{z \in \Zi, |z|>R-1} |z|^{-2\Re{s}} \\
    &< C \frac{||f||_\infty}{R}.
    \end{align*}

So the Keller-Liverani perturbation theorem implies $\lambda_{s,R} \to \lambda_{s,\infty}$ as $R \to \infty$ when $\Re{s}>1$.  Since $\lambda_{2,\infty}=1$ and $\lambda_s,R$ is analytic in $s$ for fixed $R$ we have that the solution to $\lambda_{s,R}=1$ lies in an epsilon neighborhood of $2$ for large enough $R$.

Finally, we must relate the eigenvalues of $\mathcal{L}_{s,R}$ on $(\Sigma, \sigma)$ to those of $\mathcal{L}_s$ on $(\Sigma_R, \sigma)$.  It suffices to show that
\begin{align*}
\lim_{n \to \infty} \frac{1}{n} \log \sum_{\omega \in E_R^n} \left( \sup_{x \in \Sigma} e^{s S_n \tau(\omega \Vert x)} \right) &= \lim_{n \to \infty} \frac{1}{n} \log \sum_{\omega \in E_R^n} \left( \sup_{x \in \Sigma_R} e^{s S_n \tau(\omega \Vert x)} \right) \\
(P_1) &= (P_2)
\end{align*}
since the left hand side is the log of the lead eigenvalue for $L_{s,R}$ on $(\Sigma, \sigma)$ and the right hand side is the log of the lead eigenvalue of $L_s$ on $(\Sigma_R,\sigma).$  

Combining the fact that the diameter of an $n$ cylinder is at most $2^{-n}$ with bounded distortion, we have
$$
\sup_{x \in \Sigma} e^{s S_n \tau(\omega \Vert x)} = 
\sup_{x \in \Sigma_R} e^{s S_n \tau(\omega \Vert x)} + O_R(2^{-n})
$$
where the bound is uniform over $w \in E_R^n$ for fixed $s$.  Inserting this into $(P_1)$ gives
\begin{align*}
    (P_1) &= (P_2) + \lim_{n \to \infty} \frac{1}{n} \log \left( 1 + \frac{\sum_{w \in E_R^n} \sup_{x \in \Sigma_R} e^{s S_n \tau(w \Vert x)}}{C \sum_{w \in E_R^n} 2^{-n}} \right)
\end{align*}
For $w \in E_R^n$ and $x \in \Sigma_R$, bounded distortion yields $|e^{s S_n \tau(w \Vert x)}| \in [(1/R)^{2\Re{s}}, (1/2)^{2\Re{s}}]$.  So,
$$
(P_1) = (P_2) + \lim_{n \to \infty} \frac{1}{n} \log \left(1 + O\left((\frac{2}{R})^n\right)\right) = (P_2).
$$

\section{Construction of the Multilinear Sifting Set}

Recall from Chapter~\ref{pollicott}, that the set of finite admissible words
$$
\Sigma_R^* = \{ (p_1, \ldots, p_n) : p_i \leftrightarrow \rho_i \in \mathcal{P}_R, f(\rho_i) \supset \rho_{i+1} \}
$$
represents closed geodesics in $\slt{\Zi} \backslash \mathbb{H}^3$.  Specifically, when we restrict to aperiodic words in $\Sigma_R^*$ we have an exact correspondence.  In order to detect whether a geodesic is fundamental, we will need to use the correspondence between closed geodesics and hyperbolic matrices in $\slt{\Zi}$.  We define
\begin{align*}
\pi : \Sigma_R^* &\to \slt{\Zi} \\
 a &\mapsto f|_{C_a}
\end{align*}
where $C_a$ is the cylinder $\rho_{a_{1}} \cap f^{-1}(\rho_{a_2}) \cap \cdots \cap f^{-k-1}(\rho_{a_k})$ and $f$, the Pollicott map, acts as a fractional linear transformation locally on that cylinder.  Therefore, we may express the correspondence between $\Sigma_R^*$ and closed geodesics (as primitive hyperbolic matrices in $\slt{\Zi}$) as
$$
\Gamma_R := \{ \pi (a) : a \in \Sigma_R^*, a \text{ is aperiodic} \}.
$$
In order to develop an asymptotic estimate for 
$$
\# \{ \omega \in \Gamma_R : \mtrs{\omega}-4 \text{ square-free }, ||\omega||<X\}
$$
we will use the multilinear structure coming from $\Sigma_R^*$.  

\label{siftingset}

In order to construct a large set of geodesics with the desired expansion properties, we specify the general expansion result from a previous chapter, so that we have a fixed radius $R$ and corresponding spectral gap $\Theta$:
\begin{theorem}
\label{fixtheta}
There is an absolute $\Theta>0$ (fixed throughout the remaining sections) and $c,C>0$ such that for any square-free $q \in \Zi$
$$
\#\{ \gamma \in \Gamma_8 \cap B_Y : \gamma \equiv \alpha \bmod q\} = \frac{ \# (\Gamma_8 \cap B_Y)}{|\slt{q}|} + O\left( \#(\Gamma_8 \cap B_Y) E(Y,q) \right)
$$
as $Y \to \infty$ where
$$
E(Y,q) = \begin{cases}
e^{-c \sqrt{\log Y}} & \GN{q} \leq C \log Y \\
\GN{q}^C Y^{-\Theta} & \GN{q} > C \log Y
\end{cases}.
$$
\end{theorem}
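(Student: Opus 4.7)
The plan is to observe that Theorem~\ref{fixtheta} is essentially a specialization of Theorem~\ref{affinesieve} obtained by fixing a single admissible value of $R$, which in the paper is taken to be $R=8$. Since no subsequent chapter requires varying $R$, one simply selects a concrete $R$ for which all the estimates of the previous chapter apply, and then the constants $\Theta_R$, $c_R$, $C_R$ become absolute constants $\Theta$, $c$, $C$. In particular, no new analytic content is required; the theorem is a packaging statement.

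First I would verify that $R=8$ satisfies the hypotheses of all the intermediate results in Chapters~\ref{pollicott} through \ref{delta}. The key points to check are: (i) irreducibility and aperiodicity of $(\Sigma_R,\sigma)$, which hold for $R\geq 4$; (ii) Lemma~\ref{gen} (generation of $\slt{\Zi}$ by length-$4$ admissible suffix pairs), which requires only the standard eight-region structure available for $R\geq 4$; (iii) that the critical exponent $\delta_R$ satisfies $\delta_R>1$, so that the countable-alphabet operator $\mathcal{L}_{s,\infty}$ is bounded in a neighborhood of $s=\delta_R$; and (iv) that $\delta_R$ is far enough from $2$ to give a usable spectral gap from the quasi-random/Selberg input. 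By Theorem~\ref{deltabig}, $\delta_R\to 2$, so in particular $\delta_8>1$, and the entire chain of lemmas (bounded distortion, the convolution decomposition of $\mu_b$, Theorem~\ref{mu}, and the Fourier--Walsh reassembly leading to Theorem~\ref{Mbound}) applies verbatim with $R=8$.

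Next, I would follow the contour-shift argument from the end of the previous chapter: starting with the renewal function $N_q^*(T,\emptyset,\mathbf{1},\delta_{\pi_q(\alpha)})$, which counts $\{\gamma\in\Gamma_R:\gamma\equiv\alpha\bmod q,\ \|\gamma\|<\sqrt{2\cosh T}\}$ via the identity $2\cosh d(j,gj)=\|g\|^2$, decompose the target delta function $\delta_{\pi_q(\alpha)}=|\slt{q}|^{-1}\mathbf{1}+\varphi'$ with $\varphi'$ orthogonal to constants. The constant part gives the main term $\#(\Gamma_8\cap B_Y)/|\slt{q}|$ by the unconditional renewal theorem of Chapter~\ref{lall}; the orthogonal part is controlled by Theorem~\ref{eqbound}, whose two regimes for $\GN{q}$ (small vs.\ large compared to $\log Y$) yield exactly the two cases in the definition of $E(Y,q)$. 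The only translation is from the $T$-scale (hyperbolic length) to the $Y$-scale (norm ball), where $Y\asymp e^{T/2}$ converts $e^{-cT^{1/2}}$ into $e^{-c\sqrt{\log Y}}$ and $e^{-\Theta T}$ into $Y^{-\Theta}$, after renaming constants.

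The main obstacle, modest as it is, is bookkeeping: one must confirm that every constant produced along the way in Chapter~\ref{delta} and in the Fourier--Walsh step of Theorem~\ref{Mbound} depends only on $R$ (not on $q$, $\alpha$, or $Y$) and is uniform in $\alpha\in\slt{q}$, so that fixing $R=8$ genuinely yields absolute constants. The uniformity in $\alpha$ is automatic since the bound in Theorem~\ref{Mbound} is on the operator norm on $\mathcal{F}_\rho(\Sigma_R\times\slt{q})$, independent of which class one projects onto; the uniformity in $q$ comes from the explicit $\GN{q}^C$ dependence absorbed into $E(Y,q)$. After these checks, Theorem~\ref{fixtheta} follows immediately by setting $\Theta:=\Theta_8$, $c:=c_8$, $C:=C_8$ in Theorem~\ref{affinesieve}.
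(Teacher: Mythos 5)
Your proposal matches the paper's own treatment: Theorem~\ref{fixtheta} is given no separate proof there, being introduced exactly as a specialization of the general expansion result (Theorem~\ref{affinesieve}) to a fixed radius with $\Theta$, $c$, $C$ the corresponding constants, and your rehearsal of the renewal/contour argument, the $T\leftrightarrow Y$ rescaling via $2\cosh d(j,gj)=\|g\|^2$, and the uniformity in $q$ and $\alpha$ is just making explicit what the paper leaves implicit. The only wrinkle is the paper's own mismatch between the hypothesis ``$R>8$'' in Theorem~\ref{affinesieve} and the use of $\Gamma_8$ here, which your direct check that the chain of lemmas (irreducibility, aperiodicity, bounded distortion, Lemma~\ref{gen}, Theorem~\ref{Mbound}) applies at the fixed radius effectively addresses.
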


We also have from Chapter~\ref{lall} that $\#(\Gamma_8 \cap B_Y) \asymp X^{2\delta_8}$.  In order to increase the size of the sifting set, we embed $\Gamma_8$ inside a larger subset of geodesics.  In particular, define
$$
\Xi_0 := \{a \in \Sigma_R^* : \pi(a) \in B_X\}, \hspace{.3 in} \Omega_0:=\{a \in \Sigma_R^*: \pi(a) \in B_Z\}
$$
while 
$$
\aleph_0 :=\{a \in \Sigma_8^*: \pi(a) \in B_Y\}
$$
We would like to construct a set which is the product of $\Xi_0$, $\aleph_0,$ and $\Omega_0$ such that any combination of three elements from the respective sets gives a unique geodesic.  To this end, we recall the notation from page~\pageref{conc}.  Specifically, if $a \Vert b$ denotes the admissible concatenation of two finite words.  There are two possible issues we must address:
\begin{enumerate}
\item For each $\xi \in \Xi_0$, $a \in \aleph_0$, $\omega \in \Omega_0$, the concatenation $\xi \Vert a \Vert \omega$ may not be admissible.

\item For each $\xi \in \Xi_0$, $a \in \aleph_0$, $\omega \in \Omega_0$, the concatenation $\xi \Vert a \Vert \omega$ may not be unique.  Since the length of words in $\Xi_0, \aleph, \Omega_0$ may vary it is possible that the same product may occur at different concatenation spots.
\end{enumerate}
To avoid (2), we establish a uniform length of the words in $\Omega_0$ and $\Xi_0$.  For (1), we will add a universal transition between $\xi$ and $a$ as well as $a$ and $\omega$.

In $\Gamma_R$, wordlength is commensurate with the log of the norm.  Specifically, for $a \in \Sigma_R^*$ we have $l(a) \asymp ||\pi(a)||$.  Since
$$
\# \Xi_0 = \# (\Gamma_R \cap B_X) \asymp X^{2\delta_R}
$$
there is some $l_x$ so that
$$
\Xi := \{ a \in \Sigma_R^*| a \in \Xi_0 , l(a) = l_x\} \subset \Xi_0
$$
has size  $ \gg X^{2 \delta_R}/ \log X$.  Similarly, we can find $l_z$ and
$$
\Omega := \{ a : a \in \Sigma_R^*| a \in \Omega_0, l(a) = l_z\} \subset \Omega_0
$$
of size $\gg Z^{2\delta_R}/ \log Z$.  

Finally, we address the issue that $\xi \Vert a \Vert \omega$ may not be an admissible concatenation in $\Sigma_R^*$.  Recall from Chapter~\ref{pollicott} that $(\Sigma_R, \sigma)$ is irreducible and aperiodic as long as $R>3$.  In the course of proving these properties, we showed that $\hat{f}^3(p_1) \supset p_2$ for any two states in $p_1, p_2 \in \mathcal{P}_R$ .  Therefore for any two words $a, b \in \Sigma_R^*$ there is a word $\iota$ of length $3$ such that $a \Vert \iota \Vert b \in \Sigma_R^*$.  Note that $\iota$ only depends on the final letter of $a$ and the first letter of $b$.  So we can arbitrarily choose a dictionary of three letter words $\iota$ such that $a \Vert \iota \Vert b$ is admissible and abbreviate the new word as $a \dot{\Vert} b$.  Thus, we define our sifting set as 
\begin{align*}
\Pi = \{\xi \dot{\Vert} a \dot{\Vert} \omega : \xi \in \Xi, a \in \aleph, \omega \in \Omega\}.
\end{align*}
Since $\pi(\Xi) \subset B_X, \pi(\aleph) \subset B_Y, \pi(\Omega) \subset B_Z$ there is some universal $C$ such that $\pi(\Pi) \subset B_{CN}$ where $N= XYZ$.

Write
\begin{align*}
|\mathcal{U}_\mfq| 
&= \sum_{\varpi \in \Pi} \mathbf{1}_{\{ \mtrs{\pi({\varpi})} - 4 \equiv 0 (\mfq)\}} 
= \sum_{\substack{ t \bmod\mfq \\ t^2 \equiv 4 (\mfq)}} \sum_{\varpi \in \Pi}\mathbf{1}_{\{ \mtr{\pi({\varpi})} \equiv t (\mfq)\}} \\
&= \sum_{\substack{ t \bmod\mfq \\ t^2 \equiv 4(\mfq)}} \sum_{\varpi \in \Pi} \frac{1}{\GN{\mfq}} \sum_{q | \GN{\mfq}} \sum_{\chi (q)} \chi(\mtr{\pi({\varpi})}-t)
\end{align*}
where $\displaystyle \sum_{q | \GN{\mfq}}$ is a sum over real $q$ and $\displaystyle \sum_{\chi(q)}$ ranges over all characters of $\mathbb{Z}[i]/(\mfq)$ of order $q$.  We will now separate the sum into characters of small and large order.

\section{Small Divisors}

We estimate
\begin{align*}
\mathcal{M}_\mfq := \sum_{\substack{ t \bmod\mfq \\ t^2 \equiv 4(\mfq)}} \sum_{\varpi \in \Pi} \frac{1}{\GN{\mfq}} \sum_{\substack{q | \GN{\mfq} \\ q \leq Q_0}} \sum_{\chi (q)} \chi(\mtr{\pi(\varpi)}-t)
\end{align*}
Expand from the definition of the sifting set $\Pi$ and partition $\aleph$ into residues classes in $\slt{q}$:
\begin{align*}
\mathcal{M}_\mfq 
&= \sum_{\substack{ t \bmod\mfq \\ t^2 \equiv 4(\mfq)}} \sum_{(\xi a \omega) \in \Pi} \frac{1}{\GN{\mfq}} \sum_{\substack{q | \GN{\mfq} \\ q \leq Q_0}} \sum_{\chi (q)} \chi(\mtr{\pi(\xi a \omega)}-t) \\
&= \sum_{\substack{ t \bmod\mfq \\ t^2 \equiv 4(\mfq)}} \sum_{\substack{\xi \in \Sigma\\\omega \in \Omega}} \frac{1}{\GN{\mfq}}  \sum_{\substack{q | \GN{\mfq} \\ q \leq Q_0}} \sum_{\chi (q)} \sum_{\substack{a_0 \in \slt{q}}} \chi(\mtr{\pi(\xi a_0 \omega)}-t) \left[ \sum_{\substack{a \in \aleph \\ a \equiv a_0(q)}}1\right].
\end{align*}
Now, using Theorem~\ref{fixtheta} from page~\pageref{fixtheta} we write $\mathcal{M}_\mfq= \mathcal{M}_\mfq^{(1)}+r^{(1)}(\mfq)$ where
\begin{align*}
    \mathcal{M}_\mfq^{(1)}&:=  \sum_{\substack{ t \bmod\mfq \\ t^2 \equiv 4(\mfq)}} \sum_{\substack{\xi \in \Sigma\\\omega \in \Omega}} \frac{1}{\GN{\mfq}}  \sum_{\substack{q | \GN{\mfq} \\ q \leq Q_0}} \sum_{\chi (q)}\frac{|\aleph|}{|\slt{\mfq}|} \sum_{\substack{a_0 \in \slt{q}}} \chi(\mtr{\pi(\xi) a_0 \pi(\omega)}-t) , \\
    r^{(1)}(\mfq)&:=\sum_{\substack{ t \bmod\mfq \\ t^2 \equiv 4(\mfq)}} \sum_{\substack{\xi \in \Sigma\\\omega \in \Omega}} \frac{1}{\GN{\mfq}}  \sum_{\substack{q | \GN{\mfq} \\ q \leq Q_0}} \sum_{\chi (q)} \sum_{\substack{a_0 \in \slt{q}}} \chi(\mtr{\pi(\xi) a_0 \pi(\omega)}-t) |\aleph|E(\mfq,Y).
\end{align*}
First, we address $|r^{(1)}(\mfq)|$:
$$
\sum_{\GN{\mfq}<\mathcal{Q}}|r^{(1)}(\mfq)| \ll \sum_{\GN{\mfq}<\mathcal{Q}} \tau(\mfq) \frac{|\Pi|}{\GN{\mfq}} \sum_{\substack{q | \GN{\mfq} \\ q \leq Q_0}} q^4 q^CY^{-\Theta} \ll |\Pi| (\log \mathcal{Q})^2 Q_0^CY^{-\Theta} .
$$
Back to $\mathcal{M}_\mfq^{(1)}$, we want to sum over all divisors of $\mfq$, not only the small ones.  So, we reintroduce the large divisors by writing $\mathcal{M}_\mfq^{(2)}=\mathcal{M}_\mfq^{(1)}+r^{(2)}(\mfq)$ where
\begin{align*}
    \mathcal{M}_\mfq^{(2)} &:=  \sum_{\substack{ t \bmod\mfq \\ t^2 \equiv 4(\mfq)}} \frac{|\Pi|}{\GN{\mfq}}  \sum_{\substack{q | \GN{\mfq}}} \sum_{\chi (q)}\frac{1}{|\slt{\mfq}|} \sum_{\substack{a_0 \in \slt{q}}} \chi(\mtr{a_0}-t),  \\
    r^{(2)}(\mfq) &:= \sum_{\substack{ t \bmod\mfq \\ t^2 \equiv 4(\mfq)}} \frac{|\Pi|}{\GN{\mfq}}  \sum_{\substack{q | \GN{\mfq} \\ q > Q_0}} \sum_{\chi (q)}\frac{1}{|\slt{\mfq}|} \sum_{\substack{a_0 \in \slt{q}}} \chi(\mtr{a_0}-t).
\end{align*}
Observe that we collapsed the sums over $\Sigma$ and $\Omega$ since they only reindex the sum over $\slt{q}$.

Now, define
\begin{align*}
\rho_t(\mfp) &:= \frac{1}{|\slt{\mfp}|} \sum_{\gamma \in \slt{\mfp}} \sum_{\chi(\mfp)} \chi(\mtr{\pi(\gamma)}-t) \\
&= \frac{1}{|\slt{\mfp}|} \sum_{\gamma \in \slt{\mfp}} (\GN{\mfp}-1){\bf 1}_{\{\mtr{\pi(\gamma)}=t(\mfp)\}} + (-1) {\bf 1}_{\{\mtr{\pi(\gamma)}\neq t(\mfp)\}} \\
&=\frac{\GN{\mfp}\left(\#\{ \gamma \in \slt{\mfp}: \mtr{\pi(\gamma)} = t(\mfp)\}\right) - |\slt{\mfp}|}{|\slt{\mfp}|}.
\end{align*}

\begin{lemma}  For  $t = \pm 2 \bmod \mfp$,
$$\#\{ \gamma \in \slt{\mfp}: \mtr{\pi(\gamma)} = t(\mfp)\} = \GN{\mfp}^2$$
\end{lemma}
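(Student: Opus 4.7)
The plan is a direct count. Since $\mathfrak{p}$ is a Gaussian prime, the quotient ring $\Zi/\mathfrak{p}$ is a finite field of size $q := \GN{\mathfrak{p}}$; write $\mathbb{F}_q$ for it. A typical element of $\slt{\mathfrak{p}}$ is $\gamma = \begin{pmatrix} a & b \\ c & d \end{pmatrix}$ with $a,b,c,d \in \mathbb{F}_q$ satisfying $ad-bc=1$. The trace condition $a+d \equiv t \pmod{\mathfrak{p}}$ lets us eliminate $d = t - a$, and the determinant condition then becomes
\[
bc \,=\, a(t-a) - 1 \,=\, -\bigl(a - t/2\bigr)^2 + t^2/4 - 1.
\]
For $t = \pm 2$ this simplifies to $bc = -(a \mp 1)^2$ (note $2$ is a unit in $\mathbb{F}_q$ since $\mathfrak{p}$ does not divide $2$ unless $\mathfrak{p} = (1+i)$, a case that should be handled separately but analogously).

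Now I would count pairs $(b,c) \in \mathbb{F}_q^2$ with $bc$ equal to a given element $\alpha \in \mathbb{F}_q$, and sum over $a$. When $\alpha = 0$, the set $\{(b,c) : bc = 0\}$ has $2q - 1$ elements (either $b=0$ or $c=0$, with the origin counted once). When $\alpha \neq 0$, then for each of the $q-1$ nonzero values of $b$ the value of $c = \alpha/b$ is determined, giving $q-1$ solutions.

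Applying this to our parametrization with $t = 2$: the right-hand side $-(a-1)^2$ vanishes for exactly one value $a = 1$, contributing $2q-1$ pairs $(b,c)$, and is nonzero for the remaining $q-1$ values of $a$, each contributing $q - 1$ pairs. Thus the total count is
\[
(2q-1) + (q-1)(q-1) \,=\, 2q - 1 + q^2 - 2q + 1 \,=\, q^2 \,=\, \GN{\mathfrak{p}}^2.
\]
The case $t = -2$ is identical with $a \mapsto -a$. There is no substantive obstacle here; the only mild care-point is that the argument uses $2 \in (\Zi/\mathfrak{p})^\times$, which is automatic except possibly for $\mathfrak{p}$ lying over $2$, where one can redo the calculation by completing the square appropriately (or checking by hand since the residue field is very small).
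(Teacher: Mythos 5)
Your count is correct, and it reaches the same total $\GN{\mfp}^2$ as the paper, but via a slightly different decomposition. The paper fixes the trace and splits on whether $c=0$: if $c=0$ then $ad=1$ and $a+a^{-1}=t$ forces $a=d=\pm1$ (a double root of $x^2\mp 2x+1$) with $b$ free, giving $\GN{\mfp}$ matrices; if $c\neq 0$ then $a$ is free, $d=t-a$ is forced, and $b=c^{-1}(ad-1)$ is solved for, giving $(\GN{\mfp}-1)\GN{\mfp}$ matrices. You instead eliminate $d=t-a$ first and count pairs $(b,c)$ with $bc=a(t-a)-1=-(a\mp1)^2$, using the standard fibre count $\#\{bc=\alpha\}=2q-1$ or $q-1$; the arithmetic $(2q-1)+(q-1)^2=q^2$ is right. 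The paper's split has the small advantage that no quadratic in $a$ ever needs to be analyzed in the generic case, while your version makes it transparent exactly which value of $a$ is special. One remark: your caveat about needing $2\in(\Zi/\mfp)^\times$ is unnecessary, since for $t=\pm2$ the identity $a(t-a)-1=-(a\mp1)^2$ holds over any commutative ring without dividing by $2$; so the residue-characteristic-two case $\mfp=(1+i)$ (where $t=2=-2=0$ and $bc=(a+1)^2$) is covered by the same computation and still yields $q^2=4$, with no separate hand check required. In that respect your argument is, if anything, marginally cleaner to state uniformly; both proofs are equally elementary and correct.
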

\begin{proof}
We may as well assume $t =2$, the case when $t=-2$ is similar.  We partition $\gamma = \begin{pmatrix} a & b \\ c & d \end{pmatrix}$ into two cases according to whether or not $c = 0$:
\begin{enumerate}
\item $c=0$: In this case, we must have $ad=1$ hence $a+d=a+a^{-1}=2$.  This only happens when $x^2-2x+1$ has a root in $\mathbb{F}_\mfp$.  Examining the discriminant, we see that there is exactly one solution.  So, there's only one choice for $a$ and $d$.  There are $\GN{\mfp}$ choices for $b$.
\item $c\neq 0$: There are $\GN{\mfp}-1$ nonzero choices of $c$.  In this case, there are $\GN{\mfp}$ choices for $a$ after which $d$ is determined.  The determinant equation implies $b = c^{-1}(ad-1)$.
\end{enumerate}
Combining the two cases, we have $(\GN{\mfp}-1)\GN{\mfp} + \GN{\mfp}=\GN{\mfp}^2$ total matrices in $\slt{\mfp}$ with trace $2$.
\end{proof}
Therefore, we have 
$$\rho_t(\mfp)=\frac{\GN{\mfp}^3-(\GN{\mfp}^3-\GN{\mfp})}{\GN{\mfp}^3-\GN{\mfp}} = \frac{1}{\GN{\mfp}^2-1}.$$
We can rewrite
$$
\mathcal{M}_\mfq^{(2)} =|\Pi| \sum_{\substack{t \bmod \mfq \\ t^2 \equiv 4(\mfq)}}   \prod_{\mfp|\mfq} \frac{ \left(1+\rho_t(\mfp)\right)}{\GN{\mfp}} = |\Pi| \prod_{\mfp|\mfq} (1 + {\bf 1}_{\{\mfp\neq (1+i)\}})\frac{ \left(1+\rho_t(\mfp)\right)}{\GN{\mfp}}.
$$
Above, we used the Chinese Remainder Theorem to count $t \bmod \mfq$ satisfying $t^2 \equiv 4 (\mfq)$.  As long as $p\neq (1+i)$ we will have $\pm 2$ are distinct which gives two solutions to $t^2 \equiv 4 (\mfp)$.  If $p=(1+i)$, then we only have one solution.

Finally, we bound $\displaystyle \sum_{\mfq< \mathcal{Q}} |r^{(2)}(\mfq)|$.  Note that $|\rho_t(\mfp)| \leq \frac{1}{\GN{\mfp}}$ so
$$
|r^{(2)}(\mfq)| \ll \tau(\mfq) |\Pi| \frac{1}{\GN{\mfq}} \sum_{\substack{q|\mfq \\ q \geq Q_0}} \frac{1}{\GN{q}} \ll |\Pi| \frac{\GN{\mfq}^\epsilon}{\GN{\mfq}} \frac{1}{Q_0}.
$$
Then $\displaystyle \sum_{\GN{\mfq}<\mathcal{Q}} |r^{(2)}(\mfq)| \ll |\Pi| \frac{\mathcal{Q}^\epsilon}{Q_0}$.

\section{Large Divisors}
We first need to establish the existence of a smooth cutoff function which will allow us to extend sums over $\Xi$ and $\Omega$ to all of $\slt{\Zi}$.

\subsection{Spectral Theory of the Laplace Operator}

The Laplacian $\Delta = z^2 (\partial_{xx}+\partial_{yy}+\partial_{zz})- z\partial_{z}$ acts on $L^2(\slt{\Zi}\backslash \mathbb{H}^3)$.  There are a finite number of discrete eigenvalues in $[0,1)$:
$$
0=\lambda_0 < \lambda_1 \leq \cdots \lambda_K< 1.
$$
In $[1,\infty)$, there is the continuous spectrum as well as the remaining part of the discrete spectrum (see \cite{selberg} or \cite{sarnak}.)   For the congruence subgroup $\Gamma(q)$ we have the analogue of Selberg's $3/16$-Theorem: $\lambda_1(q) = \delta(2-\delta) \geq 3/4$ (see \cite{sarnak} or Theorem 6.1 of \cite{EGM} for proof.)   In other words, $\delta \leq 3/2$.

Denote $G = \slt{\mathbb{C}}$, $K=SU(2)$, $V:=L^2(\Gamma(q) \backslash G)$

$$
V = \bigoplus_{\lambda_j<\theta(2-\theta)} V_{\lambda_j} \oplus V^{\perp}
$$
where $V_{\lambda_j}$ is a complementary series representation of parameter $s_j$ and $V^{\perp}$ does not weakly contain any complementary series representation of parameter $s>\theta$.

The following is standard, see \cite{BK14} or \cite{KO11}.
\begin{theorem}
\label{sobnorm}
Let $\Theta$ and $(\pi,V)$ be a unitary representation of G which does not weakly contain any complementary series representation with parameter $s>\Theta$. Then for any right $K$-invariant vectors $\Psi_1, \Psi_2 \in V$
$$
|\langle \pi(g).\Psi_1, \Psi_2 \rangle| \ll ||g||^{-2(2-\Theta)} ||\Psi_1|| ||\Psi_2||
$$
as $||g||\to \infty$.
\end{theorem}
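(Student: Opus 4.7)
The plan is to reduce Theorem~\ref{sobnorm} to Harish-Chandra spherical function asymptotics on $G = \slt{\mathbb{C}}$ via the Cartan decomposition and a direct integral decomposition of $(\pi, V)$. First, writing $g = k_1 a_t k_2$ with $a_t = \mathrm{diag}(e^t, e^{-t})$ in the $KAK$ decomposition, the Frobenius norm satisfies $\|g\| \asymp e^t$ as $t \to \infty$. Since $\Psi_1, \Psi_2$ are right $K$-invariant,
\begin{equation*}
\langle \pi(g) \Psi_1, \Psi_2 \rangle = \langle \pi(a_t) \Psi_1, \Psi_2 \rangle,
\end{equation*}
so it suffices to prove $|\langle \pi(a_t)\Psi_1, \Psi_2\rangle| \ll e^{-2(2-\Theta)t} \|\Psi_1\|\|\Psi_2\|$ for large $t$.

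Next, I would decompose $\pi$ as a direct integral of irreducible unitary representations $\pi_\xi$. Each fiber carrying a non-zero $K$-fixed vector is one of: the trivial representation, a spherical principal series, or a spherical complementary series with parameter $s_\xi \in (1, 2)$. The $K$-fixed subspace of each such irreducible is one-dimensional, so on each fiber the matrix coefficient of $K$-invariant vectors factors through the Harish-Chandra spherical function $\phi_{s_\xi}$. Pulling the integral inside and applying Cauchy-Schwarz reduces the estimate to the pointwise bound $\sup_{\xi} |\phi_{s_\xi}(a_t)| \ll e^{-2(2-\Theta)t}$, where the supremum ranges over the support of the spectral measure.

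The last step invokes the explicit spherical function formula for $\slt{\mathbb{C}}$,
\begin{equation*}
\phi_s(a_t) = \frac{\sinh(2(s-1)t)}{(s-1)\sinh(2t)},
\end{equation*}
which yields $|\phi_s(a_t)| \ll e^{-2(2-s)t}$ for complementary series with $s \in (1, 2)$, and the stronger bound $(1+t)e^{-2t}$ on the tempered part. By hypothesis $\pi$ does not weakly contain complementary series of parameter exceeding $\Theta$, so the supremum is controlled by the slowest decay rate $e^{-2(2-\Theta)t} \asymp \|g\|^{-2(2-\Theta)}$, giving the claim.

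The main subtlety I expect is not in the spherical calculation but in making the direct integral reduction sufficiently uniform: one must verify that the non-weak-containment hypothesis truly excludes parameters $s>\Theta$ from the support of the spectral measure associated to any $K$-invariant matrix coefficient, and that the pointwise spherical bound on each fiber lifts to the full norm bound on $V$. This is the step where the work is, and it is carried out explicitly in the references \cite{BK14} and \cite{KO11}.
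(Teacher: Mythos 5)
Your sketch is correct and is precisely the standard argument (Cartan decomposition $g=k_1a_tk_2$ with $\|g\|\asymp e^t$, direct integral over spherical irreducibles of the type I group $\slt{\mathbb{C}}$, one-dimensionality of the $K$-fixed line reducing matrix coefficients to the explicit spherical function $\varphi_s(a_t)=\sinh(2(s-1)t)/((s-1)\sinh 2t)$, then Cauchy--Schwarz) that the paper does not reproduce but simply imports from \cite{BK14} and \cite{KO11}. The one caveat worth recording is that the hypothesis as stated only excludes complementary series with parameter $s>\Theta$, so the trivial representation (the $s=2$ endpoint, whose spherical function does not decay) must also be absent from the support of the spectral measure; in the paper's application this is automatic since the theorem is applied to $V^{\perp}$, the orthogonal complement of the old forms.
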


We pick a nonnegative smooth bump function $\psi$ satisfying $\int_{G/K} \psi = 1$.  The support of $\psi$ is in a ball about the identity of (tiny) radius $\eta$.  For large $X$, we define our indicator function on $\slt{\mathbb{C}}$ as
$$
\varphi_X(g) := \int_{G / K} \int_{G/K} \ind{||h_1^{-1} g h_2||<X} \psi(h_1) \psi(h_2) d h_1 d h_2.
$$
With $\eta$ small enough, we have
$$
\varphi_X(g) = \begin{cases} 1 & \text{if } ||g||<(1-\epsilon)X \\
0 & \text{if } ||g||>(1+\epsilon) X \\
\in [0,1] & \text{otherwise} \end{cases}.
$$

Since the support of $\varphi_X$ is within a ball of radius $(1+\epsilon)X$, we have the following
$$
\sum_{\gamma \in \slt{\mathbb{Z}[i]}} \varphi_X(\gamma) \ll \# \{ \gamma \in \slt{\mathbb{Z}[i]} : ||\gamma||<C_1X\} \ll X^4
$$
Now, we wish to establish that $\varphi_X$ assigns roughly equal weights to residue classes in $\slt{p}$.  
\begin{proposition}
\label{indgap}
For squarefree $q$ and $\gamma_0 \in \slt{q}$, we have
$$
\sum_{\substack{ \xi \in \slt{\mathbb{Z}[i]} \\ \xi \equiv \gamma_0(q) }} \varphi_X (\xi) = \frac{1}{|\slt{q}|} \sum_{\xi \in \slt{\mathbb{Z}[i]}} \varphi_X(\xi) + O(X^{3}).
$$
\end{proposition}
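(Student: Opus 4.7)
The plan is to realize the congruence sum as an automorphic matrix coefficient on $L^2(\Gamma(q)\backslash G)$, where $G = \slt{\mathbb{C}}$ and $\Gamma(q) = \ker(\slt{\Zi}\to\slt{q})$, extract the main term from the trivial subrepresentation, and bound the error via the matrix-coefficient decay of Theorem~\ref{sobnorm}. Since $\xi \equiv \gamma_0 \pmod q$ exactly when $\xi \in \gamma_0\Gamma(q)$, first rewrite
$$S(\gamma_0) := \sum_{\substack{\xi \in \slt{\Zi}\\ \xi \equiv \gamma_0(q)}}\varphi_X(\xi) = \sum_{\gamma \in \Gamma(q)}\varphi_X(\gamma_0\gamma).$$

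Next, for $\omega \in G$ form the test vector $\Psi_\omega(h) := \sum_{\gamma \in \Gamma(q)}\psi(\omega^{-1}\gamma h) \in L^2(\Gamma(q)\backslash G)$, where $\psi$ is the right-$K$-invariant bump from the definition of $\varphi_X$, with support of radius less than the injectivity radius of $\Gamma\backslash G$ (hence of $\Gamma(q)\backslash G$ for all $q$). Then only one summand contributes at each $h$, so $\|\Psi_\omega\|_2 = \|\psi\|_2$ uniformly in $\omega$ and $q$, and $\Psi_\omega$ is right-$K$-invariant. A routine unfolding of $\langle \pi(a)\Psi_e,\Psi_{\gamma_0}\rangle_{L^2(\Gamma(q)\backslash G)}$ against $\mathbf{1}_{B_X}(a)$, together with the defining formula for $\varphi_X$, yields
$$S(\gamma_0) = \int_G \mathbf{1}_{B_X}(a)\,\langle \pi(a)\Psi_e,\Psi_{\gamma_0}\rangle\,da.$$
Decomposing $L^2(\Gamma(q)\backslash G) = \mathbb{C}\cdot\mathbf{1} \oplus W$, the Selberg $3/16$ analog $\lambda_1(\Gamma(q))\geq 3/4$ ensures $W$ contains no complementary-series constituent with parameter $s > \Theta := 3/2$.

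The projection onto $\mathbb{C}\cdot\mathbf{1}$ contributes $\frac{1}{\mathrm{vol}(\Gamma(q)\backslash G)}\int_G \mathbf{1}_{B_X}(a)\,da$, which equals $\frac{1}{|\slt{q}|}\sum_{\xi\in\slt{\Zi}}\varphi_X(\xi)$ after using $[\slt{\Zi}:\Gamma(q)] = |\slt{q}|$ and unfolding the $\slt{\Zi}$-sum back to an integral over $G$. For the orthogonal pieces $\Psi_\omega^W$, Theorem~\ref{sobnorm} gives $|\langle \pi(a)\Psi_e^W,\Psi_{\gamma_0}^W\rangle| \ll \|a\|^{-2(2-\Theta)} = \|a\|^{-1}$, and in $KAK$ coordinates (Haar density $\sinh^2(2t)\,dt$ on $A$, $\|a_t\| \asymp e^t$, so radial density in the norm variable $r$ is $\asymp r^3$) we obtain
$$\int_{B_X}\|a\|^{-1}\,da \asymp \int_1^X r^{-1}\cdot r^3\,dr \asymp X^3,$$
exactly the claimed error. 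The main obstacle is the bookkeeping around the smoothing and normalization: making the unfolding identity for $S(\gamma_0)$ exact so that the main-term constant comes out as $1/|\slt{q}|$ with no residual error, and verifying the injectivity-radius constraint on $\mathrm{supp}(\psi)$ — which is automatic because $\eta$ is fixed while the injectivity radii of $\Gamma(q)\backslash G$ only grow with $q$. The saturation at $X^3$ is forced by $\Theta = 3/2$, so any future improvement to the Selberg constant would improve the error exponent accordingly.
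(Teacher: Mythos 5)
Your argument is correct and is essentially the paper's own proof: you realize the congruence sum as $\int_{\{\|h\|<X\}}\langle \pi(h)\Psi_e,\Psi_{\gamma_0}\rangle\,dh$ with Poincar\'e-series test vectors built from the bump $\psi$, extract the main term spectrally, and bound the orthogonal complement by Theorem~\ref{sobnorm} with parameter $\Theta\le 3/2$, so that $\int_{\{\|h\|<X\}}\|h\|^{-2(2-\Theta)}\,dh\ll X^{3}$, exactly as in the paper's Claim 2. The only bookkeeping difference is that the paper splits off the full old-form space $V$ and compares it to level one (its Claim 1), whereas you split off only the constants using $\lambda_1(q)\ge 3/4$; just note that your identification of the constant contribution $\frac{1}{\mathrm{vol}(\Gamma(q)\backslash G)}\int_G \mathbf{1}_{B_X}$ with $\frac{1}{|\slt{q}|}\sum_{\xi}\varphi_X(\xi)$ is itself only valid up to a further $O(X^{3})$ term, obtained by running the same spectral argument at $q=1$, which is precisely the paper's final step.
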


We follow along with the proof found in \cite{BK14}.

\begin{proof}
\begin{align*}
\sum_{\substack{ \xi \in \slt{\mathbb{Z}[i]} \\ \xi \equiv \gamma_0(q) }} \varphi_X (\xi)  &= \sum_{\gamma \in \Gamma(q)} \varphi_X(\gamma\gamma_0) \\ &= \int_{G / K}  \int_{G/K} \sum_{\gamma \in \Gamma(q)}  \ind{||h_1^{-1} \gamma \gamma_0 h_2||<X} \psi(h_1)   \psi(h_2) d h_1  d h_2.
\end{align*}
Define
\begin{align*}
\mathcal{F}_q(h,g) &= \sum_{\gamma \in \Gamma(q)} \ind{||h^{-1}\gamma g||<X}, \\
\Psi_q(g) &= \sum_{\gamma \in \Gamma(q)} \psi(\gamma g) , \\
\Psi_{q,\gamma_0}(g) &= \sum_{\gamma \in \Gamma(q)} \psi(\gamma_0^{-1} \gamma g)
\end{align*}
so that we have the following identity (via a standard folding and unfolding argument):
\begin{align*}
\sum_{\substack{ \xi \in \slt{\mathbb{Z}[i]} \\ \xi \equiv \gamma_0(q) }} \varphi_X (\xi)  &= \langle \mathcal{F}_q, \Psi_q \otimes \Psi_{q,\varphi_0} \rangle_q \\
&=\int_{G/K} \ind{||h||<X} \langle \pi(h).\Psi_q,\Psi_{q,\gamma_0}\rangle_q dh
\end{align*}
Where $\langle \cdot, \cdot \rangle_q$ denotes the inner product on $L^2(\Gamma(q)\backslash G/K)$.  Now, decompose the matrix coefficient into the following
\begin{align*}
&\int_{G/K} \ind{||h||<X} \langle \pi(h).\Psi_q|_V,\Psi_{q,\gamma_0}|_V\rangle_q dh \\ &+ \int_{G/K} \ind{||h||<X} \langle \pi(h).\Psi_q|_{V^\perp},\Psi_{q,\gamma_0}|_{V^\perp}\rangle_q dh
\end{align*}
where $|_V$ denotes projection onto the old forms and $V^{\perp}$ is orthogonal.

Claim 1: $\langle \pi(h).\Psi_q|_V,\Psi_{q,\gamma_0}|_V\rangle_q = \frac{1}{[\Gamma(q):\Gamma]}\langle \pi(h).\Psi_1|_V,\Psi_{1,\gamma_0}|_V\rangle_1$

If $\varphi_1^{(q)}, \ldots, \varphi_l^{(q)}$ denote the oldforms for the spectrum below $\Theta(2-\Theta)$, then we can rewrite them as normalizations of the eigenfunctions at level one:
$$
\varphi_i^{(q)} = \frac{1}{\sqrt{[\Gamma(q):\Gamma]}} \varphi_i^{(1)}
$$
The rest follows from folding and unfolding.

Claim 2: $\int_{G/K} \ind{||h||<X} \langle \pi(h).\Psi_q|_{V^\perp},\Psi_{q,\gamma_0}|_{V^\perp}\rangle_q dh \ll X^{3}$

Since $\eta$ is fixed and $\psi$ has 'bounded' support, we have $\mathcal{S} \Psi_q|_{V^\perp}  \leq \mathcal{S} \Psi \ll 1$ (and similar for $\Psi_{q,\gamma_0}$).  So, Theorem~\ref{sobnorm} gives
$$
\int_{G/K} \ind{||h||<X} ||h||^{2\delta-4} dh \ll X^{3}.
$$

Thus, we have shown
$$
\sum_{\substack{ \xi \in \slt{\mathbb{Z}[i]} \\ \xi \equiv \gamma_0(q) }} \varphi_X (\xi) = \frac{1}{[\Gamma(q):\Gamma]}\langle \pi(h).\Psi_1|_V,\Psi_{1,\gamma_0}|_V\rangle_1+ O(X^{3}).
$$
When we use the same argument, setting $q = 1$, the theorem follows from 
$$
\sum_{\substack{ \xi \in \slt{\mathbb{Z}[i]} }} \varphi_X (\xi) = \langle \pi(h).\Psi_1|_V,\Psi_{1,\gamma_0}|_V\rangle_1+ O(X^{3})
$$
\end{proof}

\subsection{Character Sums}\

\begin{proposition}
\label{kloo}
Suppose $q$ is a rational prime and $\mfq$ lies over $q$, $\chi: \Zi \to S^1$ has order $q$ and $\xi \in \mathbb{Z}[i]^4$ satisfies $(\xi,\mfq)=1$ (component-wise.)  We have
$$
\left|\sum_{s \in \slt{\mfq}} \chi_q(s \cdot \xi) \right| \ll \GN{\mfq}^{3/2}.
$$
\end{proposition}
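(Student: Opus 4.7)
The plan is to exploit the Bruhat decomposition of $SL_2$ over the residue field $\Zi/\mfq$ to collapse the four-dimensional character sum into a one-dimensional Kloosterman sum, then invoke Weil's bound. Since $\mfq$ is prime, $\Zi/\mfq$ is a finite field of cardinality $\GN{\mfq}$; the hypothesis that $\chi$ has order $q$, together with $\mfq$ lying over $q$, guarantees that $\chi$ descends to a nontrivial additive character of this field.

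Writing $s = \begin{pmatrix} a & b \\ c & d \end{pmatrix}$ with $ad-bc \equiv 1 \pmod{\mfq}$ and $\xi = (\xi_1,\xi_2,\xi_3,\xi_4)$, so that $s\cdot\xi = a\xi_1 + b\xi_2 + c\xi_3 + d\xi_4$, I would split the sum by whether or not $c$ is invertible modulo $\mfq$. On the Borel stratum $c \equiv 0$, the determinant condition forces $d = a^{-1}$ with $a \in (\Zi/\mfq)^\times$ and $b$ free; the inner sum $\sum_b \chi(b\xi_2)$ then vanishes because $\xi_2$ is coprime to $\mfq$, so this stratum contributes $0$.

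On the big cell $c \in (\Zi/\mfq)^\times$, parametrize by $(a,c,d)$ with $b = c^{-1}(ad-1)$. Expanding $s\cdot\xi$ and isolating the $a$-dependence, the inner sum
\[
\sum_{a \in \Zi/\mfq}\chi\bigl(a(\xi_1 + c^{-1}d\xi_2)\bigr)
\]
equals $\GN{\mfq}$ when $d \equiv -c\xi_1\xi_2^{-1}$ (using again that $\xi_2$ is invertible) and vanishes otherwise. Substituting this forced value of $d$ leaves
\[
\GN{\mfq}\sum_{c \in (\Zi/\mfq)^\times} \chi\bigl(\alpha c + \beta c^{-1}\bigr),
\]
where $\beta = -\xi_2 \neq 0$ and $\alpha = \xi_3 - \xi_1\xi_4\xi_2^{-1}$.

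The only external ingredient needed is Weil's bound for Kloosterman sums over the finite field $\Zi/\mfq$, which gives the inner $c$-sum as $O(\GN{\mfq}^{1/2})$; combined with the prefactor $\GN{\mfq}$ this yields the claimed $O(\GN{\mfq}^{3/2})$. The only degeneracy worth noting is when $\alpha \equiv 0$, in which case the sum collapses to $\sum_{c\neq 0}\chi(\beta c^{-1}) = -1$, satisfying the bound a fortiori. The Bruhat collapse itself is routine linear algebra, so there is no serious obstacle; the argument is really just the standard ``reduction to a Kloosterman sum'' template applied to $SL_2$ over a residue field, with the component-wise coprimality hypothesis ensuring that the right variable, namely $\xi_2$, is invertible at each step.
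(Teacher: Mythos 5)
Your proposal is correct and follows essentially the same route as the paper: split $\slt{\mfq}$ by whether $c$ vanishes, kill the Borel stratum using invertibility of the $b$-coefficient, collapse the big cell to the Kloosterman sum $\sum_{c\neq 0}\chi\bigl(c(\xi_3-\xi_1\xi_4\xi_2^{-1})-c^{-1}\xi_2\bigr)$, and finish with the Weil bound (the paper cites Weil in the split case and Lidl--Niederreiter in the inert case, which is the same uniform bound over the residue field you invoke). The only cosmetic difference is that you sum over $a$ to force $d$, while the paper sums over $d$ to force $a$; both yield the identical Kloosterman sum, and you handle the degenerate Ramanujan case just as the paper does.
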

\begin{proof}
We may as well assume $y \neq 0 (\mfq)$, otherwise we could alter the following argument on $s$.  

We now partition all $s = \begin{pmatrix} a & b \\ c & d \end{pmatrix}$ into two cases, either $c=0$ or $c\neq 0$.  If $c\equiv 0 (\mfq)$, then $ad-bc=1$ implies $d=a^{-1} (\mfq)$ and $b$ is anything. Since $b$ ranges over all $\Zi/(\mfq)$, we have
\begin{align*}
\sum_{\substack{s \in \slt{\mfq}\\c=0 (\mfq)}} \chi_q(s \cdot \xi) &= \sum_{a(\mfq)}'\sum_{b(\mfq)} \chi_q(ax + by + a^{-1}w)\\& = \sum_{a(\mfq)}' \chi_q(ax+a^{-1}w) \sum_{b(\mfq)} \chi_q(by) =0.
\end{align*}
On the other hand if $c\neq 0$, then we can pick any $a,d$ which imply $b = c^{-1}(ad-1)$ so
\begin{align*}
\sum_{\substack{s \in \slt{\mfq}\\c\neq 0(\mfq)}} \chi_q(s \cdot \xi) &= \sum_{c(\mfq)}'\sum_{a,d(\mfq)} \chi_q(ax+c^{-1}(ad-1)y+cz+dw) \\
&=\sum_{c(\mfq)}' \chi_q(cz-c^{-1}y) \sum_{a(\mfq)} \chi_q(ax) \sum_{d(\mfq)} \chi_q(d(c^{-1}ay+w)).
\end{align*}
Now, note that
$$
\sum_{d(\mfq)} \chi_q(d(c^{-1}ay+w)) = \begin{cases} 0 & a \not\equiv -cy^{-1}w (\mfq) \\ \GN{\mfq} & a \equiv -cy^{-1}w (\mfq) \end{cases}.
$$
Hence
\begin{align*}
    \sum_{\substack{s \in \slt{\mfq}\\c\neq 0(\mfq)}}\chi_q(s \cdot \xi) &= \GN{\mfq}  \sum_{c(\mfq)}' \chi_q(cz-c^{-1}y) \chi_q(-cy^{-1}wx) \\
    &=\GN{\mfq}  \sum_{c(\mfq)}' \chi_q(c(z-y^{-1}wx)-c^{-1}y).
\end{align*}
Now, since $y\neq 0$, we have a nontrivial Kloosterman sum (or perhaps a Ramanujan sum if $z-y^{-1}wx=0$.)  Regardless of whether $q$ is split or inert, we have
$$
|K(\chi_q;a,b)| \leq 2\GN{q}^{1/2}.
$$
When $q$ splits, this is just the Weil bound.  When $q$ is inert, see Theorem 5.45 of \cite{lidln}.
\end{proof}

From Proposition~\ref{kloo}, we have the following
\begin{corollary}
For square-free $\mfq$, $\chi$ a character of $\Zi/ (\mfq)$ of order $\GN{\mfq}$ and $(\xi,\mfq)=1$, we have
$$
\left|\sum_{s \in \slt{\mfq}} \chi_q(s \cdot \xi) \right| \ll \GN{\mfq}^{3/2}.
$$
\end{corollary}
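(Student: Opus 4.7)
The plan is to reduce the corollary to Proposition~\ref{kloo} by factoring $\mfq$ into distinct prime ideals and exploiting the multiplicativity coming from the Chinese Remainder Theorem. Since $\mfq$ is square-free we may write $\mfq = \mfp_1 \mfp_2 \cdots \mfp_r$ with the $\mfp_i$ distinct Gaussian primes lying over rational primes $p_i$, and CRT gives canonical isomorphisms $\Zi/(\mfq) \cong \prod_i \Zi/(\mfp_i)$ and hence $\slt{\mfq} \cong \prod_i \slt{\mfp_i}$. The bilinear pairing $s \cdot \xi$ respects these product decompositions in the obvious component-wise way.

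Next I would decompose $\chi$ through CRT: as a character of the additive group of $\Zi/(\mfq)$, it factors uniquely as $\chi = \prod_{i=1}^r \chi_{\mfp_i}$, where each $\chi_{\mfp_i}$ is a character on $\Zi/(\mfp_i)$. The hypothesis that $\chi$ has order $\GN{\mfq} = \prod_i \GN{\mfp_i}$ forces each local factor $\chi_{\mfp_i}$ to be nontrivial of exact rational-prime order $p_i$, matching the hypothesis of Proposition~\ref{kloo}. Similarly, the assumption $(\xi,\mfq)=1$ immediately gives $(\xi,\mfp_i)=1$ component-wise for each $i$, so the coprimality hypothesis of the proposition is satisfied at every prime.

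Now the sum factors:
\begin{align*}
\sum_{s \in \slt{\mfq}} \chi(s \cdot \xi)
&= \sum_{(s_1,\ldots,s_r) \in \prod_i \slt{\mfp_i}} \prod_{i=1}^r \chi_{\mfp_i}(s_i \cdot \xi)
= \prod_{i=1}^r \sum_{s_i \in \slt{\mfp_i}} \chi_{\mfp_i}(s_i \cdot \xi).
\end{align*}
Applying Proposition~\ref{kloo} to each factor yields $\bigl|\sum_{s_i} \chi_{\mfp_i}(s_i \cdot \xi)\bigr| \ll \GN{\mfp_i}^{3/2}$, and multiplying gives the desired bound $\GN{\mfq}^{3/2}$, with implied constant uniform in $r$ because the bound at each prime carries an absolute constant (ultimately coming from the Weil bound for Kloosterman sums).

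The main (mild) obstacle is really a bookkeeping point rather than a technical one: one must verify that ``order $\GN{\mfq}$'' in the corollary's hypothesis is compatible, under CRT, with ``order $p_i$'' in the proposition's hypothesis at each prime. Once one confirms that a character of $\Zi/(\mfq)$ of the stated order is precisely a tensor product of local characters each satisfying the local hypothesis, the argument reduces to multiplying $r$ instances of Proposition~\ref{kloo}. No delicate cancellation across primes is needed, because the sum completely decouples over the prime factorization.
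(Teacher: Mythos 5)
Your argument is exactly the paper's proof: the paper factors $\chi$ via Pontryagin duality (equivalently CRT) into characters of prime order and applies Proposition~\ref{kloo} to each local factor, just as you do. One small caveat: your claim that the implied constant is uniform in $r$ does not follow as stated, since multiplying $r$ local bounds each with constant $C$ gives $C^{r} = C^{\omega(\mfq)} \ll_\epsilon \GN{\mfq}^{\epsilon}$ rather than an absolute constant --- a harmless divisor-type loss that the paper also glosses over.
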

\begin{proof}
Pontryagin duality allows us to express $\chi_q$ as a product of characters of prime order.  Apply Proposition~\ref{kloo} to each term.
\end{proof}

Now, we combine this bound with our indicator function $\varphi$:
\begin{proposition}
For square-free $q$ and a $\chi_q$ character of order $\GN{q}$, we have
$$\left| \sum_{\xi \in \slt{\Zi}} \varphi_X(\xi) \chi_q(s \cdot \xi) \right| \ll \GN{\mfq}^{-3/2+\epsilon}X^4+\GN{q}^3X^3.$$
\end{proposition}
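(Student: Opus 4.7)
The approach is to partition the sum over $\xi \in \slt{\Zi}$ by residue class modulo $\mfq$, apply the lattice equidistribution result Proposition~\ref{indgap} to the resulting residue counts, and then recognize that the main term reduces to exactly the character sum controlled by the corollary to Proposition~\ref{kloo}.

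Concretely, since $\chi_q$ factors through $\Zi/(\mfq)$ and the linear form $s \cdot \xi$ depends only on $\xi \bmod \mfq$, regroup as
\begin{equation*}
\sum_{\xi \in \slt{\Zi}} \varphi_X(\xi)\, \chi_q(s \cdot \xi) \;=\; \sum_{\gamma_0 \in \slt{\mfq}} \chi_q(s \cdot \gamma_0) \sum_{\substack{\xi \in \slt{\Zi} \\ \xi \equiv \gamma_0 (\mfq)}} \varphi_X(\xi).
\end{equation*}
Proposition~\ref{indgap} replaces the inner count by $|\slt{\mfq}|^{-1} \sum_\xi \varphi_X(\xi) + O(X^3)$ uniformly in $\gamma_0$, splitting the estimate into a main term and an error term.

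For the main term $\frac{\sum_\xi \varphi_X(\xi)}{|\slt{\mfq}|} \sum_{\gamma_0 \in \slt{\mfq}} \chi_q(s \cdot \gamma_0)$, the corollary to Proposition~\ref{kloo} bounds the inner character sum by $\GN{\mfq}^{3/2+\epsilon}$; combined with the support estimate $\sum_\xi \varphi_X(\xi) \ll X^4$ (from the ball-support of $\varphi_X$) and $|\slt{\mfq}| \asymp \GN{\mfq}^3$, this contributes $\ll X^4 \GN{\mfq}^{-3/2+\epsilon}$. For the error term, trivially estimating $|\chi_q(s \cdot \gamma_0)| \leq 1$ and summing over $\slt{\mfq}$ gives $\ll X^3 |\slt{\mfq}| \ll X^3 \GN{q}^3$. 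Adding the two contributions produces the claimed bound.

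The main obstacle is ensuring the corollary to Proposition~\ref{kloo} applies uniformly over $\gamma_0$: the coprimality hypothesis $(\xi,\mfq)=1$ used there may fail at primes $\mfp \mid \mfq$ where a component of $s$ is divisible by $\mfp$. This is handled through the CRT decomposition $\slt{\mfq} \cong \prod_{\mfp\mid\mfq} \slt{\mfp}$ by isolating the finitely many bad primes, bounding their character sum factors trivially by $|\slt{\mfp}|$, and retaining Kloosterman square-root cancellation at the remaining primes. Summing the resulting contributions over divisors of $\mfq$ costs a divisor-function factor $\tau(\mfq) \ll \GN{\mfq}^\epsilon$, which is precisely where the $\epsilon$ in the exponent of the main term enters.
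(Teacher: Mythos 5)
Your proof follows essentially the same route as the paper: partition the sum over $\xi$ by residue class modulo $\mfq$, apply Proposition~\ref{indgap} to the inner counts, and bound the resulting complete character sum over $\slt{\mfq}$ by the Kloosterman-type corollary to Proposition~\ref{kloo}, with the error term estimated trivially against $|\slt{\mfq}| \ll \GN{q}^3$. Your additional remark about isolating the primes where the coprimality hypothesis $(s,\mfq)=1$ fails is a detail the paper's proof does not spell out, but it is a refinement of the same argument rather than a different approach.
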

\begin{proof}
Partition over residue classes over $\mfq$ and apply Proposition~\ref{indgap}:
\begin{align*}
    \left| \sum_{\xi \in \slt{\Zi}} \varphi_X(\xi) \chi_q(s \cdot \xi) \right| &= \left| \sum_{\gamma \in \slt{\mfq}} \chi_q(\gamma \cdot s) \sum_{\substack{\xi \in \slt{\Zi} \\ \xi \equiv \gamma(\mfq)}} \varphi_X(\xi)\right| \\
    &= \left|\sum_{\gamma \in \slt{\mfq}} \chi_q(\gamma \cdot s)\right| \frac{X^4}{|\slt{\mfq}|} + O(\GN{\mfq}^3X^3) \\
    &\ll \frac{\GN{\mfq}^{3/2+\epsilon} X^4}{\GN{q}^3} + \GN{\mfq}^3X^3.
\end{align*}
\end{proof}

\subsection{Large Divisors}\

Define
\begin{align*}
    r(\mfq) := \sum_{t^2 \equiv 4 (\mfq)} \sum_{\varpi \in \Pi} \frac{1}{\GN{\mfq}} \sum_{\substack{q | \GN{\mfq} \\ q \geq Q_0}} \sum_{\chi (q)} \chi(\mtr{\pi(\varpi)} - t).
\end{align*}
Our goal is to bound
\begin{align*}
    \mathcal{E}:= \sum_{\GN{\mfq} <\mathcal{Q}} |r(\mfq)| = \sum_{\GN{\mfq} <\mathcal{Q}} r(\mfq)\zeta(\mfq),
\end{align*}
where the sum ranges over square-free Gaussian ideals up to norm $\mathcal{Q}$ and $\zeta(\mfq)$ captures the argument of $r(\mfq)$.  In order to interchange the order of $q$ and $\mfq$ in $\mathcal{E}$ we introduce
$$
\zeta_1(q,r) := q \sum_{\substack{\GN{\mfq}<\mathcal{Q}\\ \GN{\mfq} \equiv 0 (q)}} \frac{\zeta(\mfq)}{\GN{\mfq}} \sum_{t^2 \equiv 4 (\mfq)} \chi_q^r(-t) {\bf 1}_{\{\chi_q^r\in \widehat{\mathbb{Z}[i]/(\mfq)} \}}
$$
for real $q$ and $r \in (\mathbb{Z}/q\mathbb{Z})^*$ .  Insert $\zeta_1$ into $\mathcal{E}$:
$$
\mathcal{E}=\sum_{Q_0 \leq q < \mathcal{Q}} \frac{1}{q} \sum_{r(q)}' \sum_{\varpi \in \Pi} \chi_q^r(\mtr{\pi(\varpi)}) \zeta_1(q,r)
$$
where the sum is over all square-free $q \in [Q_0,\mathcal{Q}]$.

We now restrict our attention to short intervals in $q$ and fixed $a \in \aleph$, i.e.
\begin{align*}
\mathcal{E}_1(Q,a) &:= \sum_{q \asymp Q} \left|\sum_{r (q)}' \zeta_1(q,r)  \sum_{\xi \in \Sigma}\sum_{\omega \in \Omega} \chi_q^r(\mtr{\pi(\xi a \omega)}) \right| \\
&=\sum_{q \asymp Q} \zeta_2(q) \sum_{r (q)}'\zeta_1(q,r)  \sum_{\xi \in \Sigma}\sum_{\omega \in \Omega} \chi_q^r(\mtr{\pi(\xi a \omega)}) 
\end{align*}
where we introduced $\zeta_2(q)$ to capture the absolute value of each term.  Now apply Cauchy-Schwarz in the $\xi$ parameter:
\begin{align*}
    |\mathcal{E}_1|^2 \ll |\Sigma| \sum_{\xi \in \Sigma} \left| \sum_{q \asymp Q} \zeta_2(q) \sum_{r(q)}' \zeta_1(q,r) \sum_{\omega \in \Omega} \chi(\mtr{\pi(\xi a \omega)}) \right|^2.
\end{align*}
The support of $\pi(\Sigma)$ is within $B_X$, so we replace sequences in $\Sigma$ with matrices in $B_X$ as follows
\begin{align*}
    |\mathcal{E}_1|^2 \ll |\Sigma| \sum_{\gamma \in \slt{\Zi}} \varphi_X(\gamma) \left| \sum_{q \asymp Q} \zeta_2(q) \sum_{r(q)}' \zeta_1(q,r) \sum_{\omega \in \Omega} \chi_q^r(\gamma \cdot {\pi(a \omega)}) \right|^2
\end{align*}
where we have replaced the trace of $\pi(\xi a \omega)$ with the dot product $\gamma \cdot \pi(a \omega)$ since 
$$
\mtr{\pi(\xi a \omega)} = \sum_{i,j} \pi(\xi)_{i,j} \pi(a \omega)_{i,j}.
$$
Opening the square, we have
\begin{align*}
    |\mathcal{E}_1|^2 &\ll |\Sigma| \sum_{q, q'} \sum_{\substack{ r (q) \\ r' (q')}}' \zeta_2(q) \zeta_1(q,r) \overline{\zeta_2(q') \zeta_1(q', r')} \\
&{\color{white}{.}} \hspace{.5in} \sum_{\omega, \omega'} \sum_{\gamma \in \slt{\Zi}} \varphi_X(\gamma) \chi_q^r(\gamma \cdot \pi(a\omega)) \chi_{q'}^{r'}(-\gamma \cdot \pi(a\omega')) \\
    &\ll \mathcal{Q}^\epsilon |\Sigma| \sum_{q, q'} \sum_{\substack{ r (q) \\ r' (q')}}' \sum_{\omega, \omega'} \left| \sum_{\gamma \in \slt{\Zi}} \varphi_X(\gamma) \chi_q^r(\gamma \cdot \pi(a\omega)) \chi_{q'}^{r'}(-\gamma \cdot \pi(a\omega')) \right|.
\end{align*}
We would like to combine $\chi_q$ and $\chi_{q'}$.  Let $\hat{q}$ be the least common multiple $[q,q']$ and $q_1 = \hat{q}/q$, $q_1' = \hat{q}/q'$ be the primes distinct to $q$ and $q'$ respectively.  If $b\equiv (q_1)^{-1} (q')$ and $b' \equiv (q_1')^{-1} (q)$ we have 
$$
\chi_q^r(\gamma \cdot \pi(a\omega)) \chi_{q'}^{r'}(-\gamma \cdot \pi(a\omega')) = \chi_{\hat{q}}(\xi \cdot (r q_1'b' \pi (a\omega) - r'q_1b \pi(a\omega'))).
$$
If $s:=r q_1'b' \pi (a\omega) - r'q_1b \pi(a\omega')$, then we may not have $(s,\hat{q})=1$.  So, remove common factors to get $(s',q_0)=1$.  If $\hat{q_0}=\hat{q}/q_0$ represents all the factors we removed from $\hat{q}$, then notice
$$
rq_1'b' \pi(a\omega) \equiv r'q_1b \pi(a \omega') \bmod \hat{q_0}.
$$
Multiply (on the left) by $\pi(a)^{-1} \in \slt{\hat{q_0}}$ to remove $\pi(a)$.  Take determinants of both sides and since $\Det{\pi(\omega)}=\Det{\pi(\omega')}=1$, we get
$$
(rq_1'b')^2 \equiv (r'q_1b)^2 \bmod \hat{q_0}.
$$
Now, $\hat{q_0} | (q,q')$ so $(q_1,\hat{q_0}) = (r',\hat{q_0}) = (b,\hat{q_0})=1$.  Similarly, $(q_1'b'r,\hat{q_0})=1$.  So, we can find some $u$ such that
$$
q_1'b'r \equiv u q_1br' \bmod \hat{q_0}
$$
which satisfies $u^2 \equiv 1 (\hat{q_0})$ by the squared congruence relation above - there are only $N^\epsilon$ such $u$.  We also have
$\pi(\omega) \equiv u\pi(\omega') \bmod \hat{q_0}$.

Back to our estimate of $\mathcal{E}_1$, partition the sum over $q$ and $q'$ via their least common multiple as follows:
\begin{align*}
    |\mathcal{E}_1(Q,a)|^2 \ll |\Sigma| &\sum_{\hat{q} \asymp Q} \sum_{q_1q_1'\tilde{q_0}\hat{q_0} = \hat{q}} \sum_{\substack{u (\hat{q_0}) \\ u^2 \equiv 1 (\hat{q_0})}} \sum_{r(q)}'\sum_{\substack{r'(q')\\q_1'b'r \equiv uq_1br'(\hat{q_0})}} \\
&\sum_{\omega' \in \Omega} \sum_{\substack{\omega \in \Omega \\\pi(\omega) \equiv u \pi(\omega') (\hat{q_0})}} \left| \sum_{\xi \in \slt{\Zi}} \varphi_X(\xi) \chi_{q_0}(\xi \cdot s) \right|.
\end{align*}
Since the support of $\pi(\Omega)$ is in $B_Z$, we replace the second sum over $\Omega$ with one over $\slt{\Zi}$ via $\varphi_Z$:
\begin{align*}
    |\mathcal{E}_1(Q,a)|^2 \ll |\Sigma| &\sum_{\hat{q} \asymp Q} \sum_{q_1q_1'\tilde{q_0}\hat{q_0} = \hat{q}} \sum_{\substack{u (\hat{q_0}) \\ u^2 \equiv 1 (\hat{q_0})}} \sum_{r(q)}'\sum_{\substack{r'(q')\\q_1'b'r \equiv uq_1br'(\hat{q_0})}} \\
& \sum_{\omega' \in \Omega} \sum_{\substack{\beta \in \slt{\Zi} \\\ \beta \equiv u \pi(\omega') (\hat{q_0})}} \varphi_Z(\beta) \left| \sum_{\xi \in \slt{\Zi}} \varphi_X(\xi) \chi_{q_0}(\xi \cdot s) \right|.
\end{align*}
Now, we apply our bounds for the smoothing function $\varphi_X$:
\begin{align*}
    |\mathcal{E}_1(Q,a)|^2 
    \ll |\Sigma| \sum_{\hat{q} \asymp Q} \sum_{q_1q_1'\tilde{q_0}\hat{q_0} = \hat{q}} \sum_{\substack{u (\hat{q_0}) \\ u^2 \equiv 1 (\hat{q_0})}} \sum_{r(q)}' &\sum_{\substack{r'(q')\\q_1'b'r \equiv uq_1br'(\hat{q_0})}} |\Omega| \left[ \frac{Z^4}{\GN{\hat{q}_0}^3}+Z^3 \right] \cdot \\
& \left[ \GN{q_0}^{-3/2+\epsilon}X^4+\GN{q_0}^3X^3 \right].
\end{align*}
There are at most $\GN{q}$ choices for $r$ and then $\frac{\GN{q'}}{\GN{\hat{q}_0}}$ choices for $r'$.  Therefore, 
\begin{align*}
    |\mathcal{E}_1(Q,a)|^2 
    \ll |\Sigma| \sum_{\hat{q} \asymp Q} N^\epsilon \sum_{q_0\hat{q_0}=\hat{q}}& \frac{Q^2\GN{q_0}}{\GN{\hat{q}}}  |\Omega| \left[ \frac{Z^4}{\GN{\hat{q}_0}^3}+Z^3 \right]\cdot \\ &\left[ \GN{q_0}^{-3/2+\epsilon}X^4+\GN{q_0}^3X^3 \right] .
\end{align*}
Now, we use the fact that $|\Omega| \asymp \frac{Z^{2\delta}}{\log Z}$ and similarly $|\Sigma| \asymp \frac{X^{2\delta}}{\log X}$ to insert $|\Omega|Z^{-2\delta}$ and $|\Sigma|X^{-2\delta}$:
\begin{align*}
    |\mathcal{E}_1(Q,a)|^2 
    \ll N^\epsilon &Q^2 |\Sigma|^2|\Omega|^2 (XZ)^{2(2-\delta)}  \\
 &\times \left( \sum_{Q \ll \GN{\hat{q}} \ll Q^2} \frac{1}{\GN{\hat{q}}} \left[ \frac{1}{\GN{\hat{q}}^{1/2}}+\frac{1}{Z}+\frac{Q^8}{X} \right] \right).
\end{align*}
So, we get the following
\begin{theorem}
$$|\mathcal{E}_q(Q,a)| \ll N^\epsilon Q |\Sigma| |\Omega| (XZ)^{2-\delta} \left[ \frac{1}{Q^{1/4}}+\frac{1}{Z^{1/2}}+\frac{Q^4}{X^{1/2}} \right].$$
\end{theorem}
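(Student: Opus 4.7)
The plan is to derive the asserted bound as a direct consequence of the bilinear estimate developed just above the theorem statement, whose core idea is to exploit the product structure of $\Pi$ and extract bilinear cancellation after Cauchy--Schwarz. Concretely, I would start from the expression for $\mathcal{E}_1(Q,a)$, apply Cauchy--Schwarz in the $\xi$-variable, and replace the resulting sum over $\xi \in \Xi$ by a sum over $\gamma \in \slt{\Zi}$ weighted by $\varphi_X$ (legitimate because $\pi(\Xi) \subset B_X$), so that the trace $\mtr{\pi(\xi a\omega)}$ becomes a dot product $\gamma \cdot \pi(a\omega)$.

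Expanding the modulus squared, the key step is to combine the two characters $\chi_q^r$ and $\chi_{q'}^{r'}$ into a single character modulo $\hat{q} := [q,q']$ and track how much of the modulus can be stripped as a common factor; setting $\hat{q}_0 := (q,q')$, a determinant computation on both sides of the merged congruence forces $\pi(\omega) \equiv u\,\pi(\omega') \bmod \hat{q}_0$ for some $u$ with $u^2 \equiv 1 \bmod \hat{q}_0$, giving $N^\epsilon$ admissible choices of $u$. Next I would extend the constrained $\omega$-sum to all of $\slt{\Zi}$ via $\varphi_Z$, invoking Proposition~\ref{indgap} to control the $\hat{q}_0$-congruence class by $Z^4/\GN{\hat{q}_0}^3 + Z^3$, and bound the inner $\xi$-sum by the Kloosterman-type estimate $\GN{q_0}^{-3/2+\epsilon}X^4 + \GN{q_0}^3 X^3$ from the corollary of Proposition~\ref{kloo}. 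Counting $O(\GN{q}\GN{q'}/\GN{\hat{q}_0})$ consistent pairs $(r,r')$ and inserting $|\Omega| Z^{-2\delta}$, $|\Sigma| X^{-2\delta}$ produces exactly the displayed inequality
\[
|\mathcal{E}_1(Q,a)|^2 \ll N^\epsilon Q^2 |\Sigma|^2 |\Omega|^2 (XZ)^{2(2-\delta)} \sum_{Q \ll \hat{q} \ll Q^2} \frac{1}{\hat{q}}\!\left[\frac{1}{\hat{q}^{1/2}} + \frac{1}{Z} + \frac{Q^8}{X}\right].
\]

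It remains only to evaluate the $\hat{q}$-sum. Splitting into three pieces, $\sum \hat{q}^{-3/2} \ll Q^{-1/2}$ by the standard dyadic estimate, while the harmonic sums $\sum \hat{q}^{-1} \ll \log Q$ attached to the last two summands are absorbed into the $N^\epsilon$. This yields
\[
|\mathcal{E}_1(Q,a)|^2 \ll N^\epsilon Q^2 |\Sigma|^2 |\Omega|^2 (XZ)^{2(2-\delta)}\!\left[\frac{1}{Q^{1/2}} + \frac{1}{Z} + \frac{Q^8}{X}\right],
\]
after which taking square roots (and using $\sqrt{a+b+c} \leq \sqrt{a} + \sqrt{b} + \sqrt{c}$) produces the three bracketed terms $Q^{-1/4}$, $Z^{-1/2}$, $Q^4 X^{-1/2}$ of the theorem.

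The routine piece is the final evaluation of the $\hat{q}$-sum; the genuine technical work already done upstream — merging the two characters, extracting the constraint $\pi(\omega) \equiv u \pi(\omega') \bmod \hat{q}_0$, and balancing the Weil-type bound against the equidistribution bound for $\varphi_Z$ — is what determines the dependence on $Q$, $X$, $Z$ and thus the effective level of distribution. The main obstacle, were one to attempt sharper constants, would be squeezing the $Q^8/X^{1/2}$ term, which ultimately dictates how large the auxiliary parameter $X$ must be chosen relative to $Q$ and drives the absolute exponent $\alpha = 1/16 - \epsilon$ advertised in the introduction.
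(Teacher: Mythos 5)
Your proposal is correct and follows essentially the same route as the paper: it retraces the Cauchy--Schwarz/character-merging/equidistribution steps already displayed before the theorem, and then obtains the stated bound exactly as the paper does, by evaluating the $\hat{q}$-sum (using $\sum_{\GN{\hat{q}} \gg Q} \GN{\hat{q}}^{-3/2} \ll Q^{-1/2}$ and absorbing harmonic-sum logarithms into $N^\epsilon$) and taking square roots term by term.
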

Summing over $a$ and $Q$ gives
\begin{theorem}
$$\mathcal{E} \ll N^\epsilon |\Pi| (XZ)^{2-\delta} \left[ \frac{1}{Q_0^{1/4}}+\frac{1}{Z^{1/2}}+\frac{\mathcal{Q}^4}{X^{1/2}}\right].$$
\end{theorem}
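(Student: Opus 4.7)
The plan is to obtain the claimed bound for $\mathcal{E}$ as a routine summation of the previous theorem's estimate for $\mathcal{E}_1(Q,a)$, applied once over $a \in \aleph$ at a fixed modulus scale, and once over dyadic scales $Q_0 \leq Q \leq \mathcal{Q}$. Recall the identity
\begin{align*}
\mathcal{E} = \sum_{Q_0 \leq q < \mathcal{Q}} \frac{1}{q} \sum_{r(q)}' \sum_{\xi \in \Xi} \sum_{a \in \aleph} \sum_{\omega \in \Omega} \chi_q^r(\mtr{\pi(\xi a \omega)}) \zeta_1(q,r).
\end{align*}
First I would partition the range of $q$ into $O(\log \mathcal{Q}) = O(N^\epsilon)$ dyadic windows $q \asymp Q$, and use the triangle inequality to pull the sum over $a \in \aleph$ outside the absolute value. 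Comparing with the definition of $\mathcal{E}_1(Q,a)$, the inner expression is exactly $Q^{-1}\mathcal{E}_1(Q,a)$ up to an absolute constant (with the phase $\zeta_2(q)$ and the suppressed $\zeta(\mfq)$ playing no role once we have extracted absolute values).

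Next I would apply the previous theorem's estimate for $\mathcal{E}_1(Q,a)$. Dividing by $Q$ and summing over $a\in \aleph$ yields, for each fixed dyadic scale $Q$,
\begin{align*}
\sum_{a \in \aleph}\frac{\mathcal{E}_1(Q,a)}{Q} \ll N^\epsilon |\aleph||\Xi||\Omega|(XZ)^{2-\delta}\left[\frac{1}{Q^{1/4}}+\frac{1}{Z^{1/2}}+\frac{Q^4}{X^{1/2}}\right].
\end{align*}
Because $\Pi$ is (up to the short universal bridges $\dot{\Vert}$ between its factors) the Cartesian product $\Xi \times \aleph \times \Omega$, we have $|\aleph||\Xi||\Omega|\asymp|\Pi|$, giving the correct overall normalization.

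The final step is to sum the three bracketed contributions over the dyadic scales $Q_0 \leq Q \leq \mathcal{Q}$. The first, $\sum_Q Q^{-1/4}$, is a geometric series dominated by its smallest term and produces $Q_0^{-1/4}$. The second, $\sum_Q Z^{-1/2}$, contributes at worst $(\log \mathcal{Q})\,Z^{-1/2}$, absorbed into the factor $N^\epsilon$. The third, $\sum_Q Q^4/X^{1/2}$, is geometric dominated by its largest term $\mathcal{Q}^4/X^{1/2}$. Combining the three yields the stated bound. There is no substantive obstacle here: all of the analytic weight (Cauchy--Schwarz in $\xi$, the spectral-gap input via $\varphi_X$, and the $\slt{\mfq}$ Kloosterman cancellation) has already been absorbed into the prior theorem, so the present statement is pure bookkeeping --- the only point requiring a bit of care is verifying that the dyadic decomposition of the $q$-sum and the triangle inequality in $a$ do not cost more than $N^\epsilon$, which they do not.
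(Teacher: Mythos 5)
Your proposal is correct and is essentially the paper's own argument: the paper simply says ``summing over $a$ and $Q$'' after the $\mathcal{E}_1(Q,a)$ bound, and your bookkeeping (restoring the $1/q\asymp 1/Q$ weight so the factor $Q$ in the previous theorem cancels, using $|\Xi||\aleph||\Omega|\asymp|\Pi|$, and summing the dyadic scales so that $Q^{-1/4}$ is dominated by $Q_0^{-1/4}$, the $Z^{-1/2}$ term costs only a $\log\mathcal{Q}\ll N^\epsilon$ factor, and $Q^4/X^{1/2}$ is dominated by $\mathcal{Q}^4/X^{1/2}$) is exactly the intended verification.
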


\section{Sieve Theorem}

We have
$$
|U_q| = \beta(q) |\Pi| + r^{(1)}(q) + r^{(2)}(q) + r(q)
$$
where
\begin{align*}
\sum_{\GN{q} < Q} |r^{(1)}(q)| &\ll |\Pi| (\log Q)^2 \left( e^{-c \sqrt{\log Y}} + Q_0^C Y^{-\Theta}\right), \\
\sum_{\GN{q} < Q} |r^{(2)}(q)| &\ll |\Pi| \frac{Q^\epsilon}{Q_0}, \\
\sum_{\GN{q} < Q} |r(q)| &\ll N^\epsilon |\Pi| (XZ)^{2-\delta} \left( \frac{1}{Q_0^{1/4}} + \frac{1}{Z^{1/2}} + \frac{Q^4}{X^{1/2}} \right).
\end{align*}
Recall that $c$, $C$ and $\Theta$ are fixed constants coming from Theorem~\ref{fixtheta} page~\pageref{fixtheta}.  By our construction of the sifting set (in which $\Xi$ and $\Omega$ depend on $R$ but $\aleph$ does not), as we send $R \to \infty$ we can get $\delta_R$ arbitrarily close to 2 while $c$, $C$, and $\Theta$ remain constant.

For $r^{(1)}(q)$ and $r^{(2)}(q)$ we need $y>0$, $\alpha_0>0$ and $\alpha_0 C < y \Theta$, where $x+y+z=1$.  For $r(q)$ we need
\begin{align*}
\alpha_0 / 4 &> (x+z)(2-\delta), \\
z/2 &> (x+z)(2-\delta), \\
x/2 &> (x+z)(2-\delta) + 8\alpha.
\end{align*}
Observe from the last inequality that taking $x$ near $1$ and $\delta_R$ near $2$, we must have $\alpha<1/16$.  In order to achieve this level of distribution, we set $\alpha = 1/16 - \eta$ and assume for now that $\delta> 2 - \eta$.  In order to have $|\Pi| \gg N^{2\delta - \eta}$ we set $x = 1 - \eta$.

For the remaining parameters, we set
$$
z = \frac{\eta}{1+C/\Theta}, \hspace{.2cm} y = \frac{zC}{\Theta}, \hspace{.2cm} \alpha_0 = \frac{5z}{6}
$$
so that $\alpha_0 C<y \Theta$.  Moreover if $\delta>2-z/5= 2 - \frac{\eta}{5(1+C/\Theta)}$ we have
$$
z/2> \alpha_0/4 =  \frac{5z}{24}> \frac{z}{5} > 2 - \delta>(x+z)(2-\delta).
$$
Hence the three inequalities for $r(q)$ are satisfied.  We get a power savings in each error term except $r^{(1)}$ where we save an arbitrary power of log.  This proves the level of distribution for the following
\begin{theorem}
\label{sieve}
For sufficiently small $\eta$, there is a large enough $R$ so that $U$ has level of distribution $Q=T^{1/16-\eta}$.  In other words, there exists a multiplicative function $\beta: \Zi \to \mathbb{R}$ satisfying
$$
\prod_{\substack{p \\ w \leq \GN{p}< z}} (1- \beta(p))^{-1} \leq  C \left( \frac{\log z}{ \log w} \right)^2
$$
for any $2 \leq w <z$ and a decomposition
$$
|U_q| = |\Pi| \beta(q) + r(q)
$$
so that for all $K$
$$
\sum_{\substack{\text{square-free } q \\ \GN{q}<Q}} |r(q)| \ll_K \frac{|\Pi|}{\log^K N}.
$$
Moreover, when
$$
X=N^{1-\eta}
$$
we have
$$
|\Pi| \gg N^{2\delta-\eta}.
$$
\end{theorem}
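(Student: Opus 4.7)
The plan is to assemble the three error bounds $r^{(1)}, r^{(2)}, r$ derived in the previous sections, choosing the exponents $X = N^x$, $Y = N^y$, $Z = N^z$ (with $x+y+z = 1$), the character-threshold $Q_0 = N^{\alpha_0}$, and the sieve level $\mathcal{Q} = N^\alpha$, so that each $\sum_{\GN{q}<\mathcal{Q}} |r^{(i)}(q)|$ is $\ll |\Pi|/\log^K N$ while $|\Pi| \gg N^{2\delta_R - \eta}$. The multiplicative function $\beta$ will be read off from the main term $\mathcal{M}^{(2)}_\mfq$ of the small-divisor computation.

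First I would identify $\beta$. The small-divisor analysis already gives
\begin{equation*}
\mathcal{M}^{(2)}_\mfq = |\Pi|\prod_{\mfp\mid\mfq}\beta(\mfp), \qquad \beta(\mfp) = \bigl(1 + \ind{\mfp \neq (1+i)}\bigr)\frac{\GN{\mfp}}{\GN{\mfp}^2 - 1},
\end{equation*}
so $\beta(\mfp)\sim 2/\GN{\mfp}$ for all but one prime. The Gaussian-prime analogue of Mertens' theorem, $\sum_{\GN{\mfp} < z}\GN{\mfp}^{-1} \sim \log\log z$, then yields $\prod_{w \leq \GN{\mfp} < z}(1-\beta(\mfp))^{-1} \ll (\log z/\log w)^2$, the dimension-$2$ hypothesis required by the sieve.

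Next I would match the exponents to the stated error estimates. The three listed bounds translate into the requirements $\alpha_0 C < y\Theta$ for $r^{(1)}$, $\alpha_0 > 0$ for $r^{(2)}$, and, for the three summands of $r$,
\begin{equation*}
\tfrac{\alpha_0}{4} > (x+z)(2-\delta_R), \qquad \tfrac{z}{2} > (x+z)(2-\delta_R), \qquad \tfrac{x}{2} > (x+z)(2-\delta_R) + 8\alpha.
\end{equation*}
Set $\alpha = \tfrac{1}{16}-\eta$, $x = 1-\eta$ (so $y+z = \eta$), and
\begin{equation*}
z = \frac{\eta}{1+C/\Theta}, \qquad y = \frac{zC}{\Theta}, \qquad \alpha_0 = \frac{5z}{6}.
\end{equation*}
By Theorem~\ref{deltabig} one may take $R$ large enough that $2-\delta_R < z/5$; a direct check verifies all six inequalities. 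The critical third large-divisor inequality reduces to $\tfrac{1-\eta}{2} - 8(\tfrac{1}{16}-\eta) = \tfrac{15\eta}{2} > (x+z)(2-\delta_R)$, which is automatic. Each $r$-bound then contributes a power saving in $N$, while $r^{(1)}$ contributes an arbitrary $\log N$ saving; together they give the stated level of distribution.

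For the lower bound on $|\Pi|$, Theorem~\ref{normball} applied at $R$ and at $8$ together with the length-truncation yields $|\Xi| \gg X^{2\delta_R}/\log X$, $|\aleph| \gg Y^{2\delta_8}/\log Y$, $|\Omega| \gg Z^{2\delta_R}/\log Z$. Uniqueness of the concatenation $\xi \dot{\Vert} a \dot{\Vert} \omega$ follows from the fixed wordlengths $l_x, l_z$ and the universal three-letter connector dictionary, so
\begin{equation*}
|\Pi| \gg \frac{X^{2\delta_R}Y^{2\delta_8}Z^{2\delta_R}}{(\log N)^3} \gg N^{2\delta_R - 2y(\delta_R-\delta_8) - o(1)} \gg N^{2\delta_R - \eta}
\end{equation*}
since $y = O(\eta)$ and $\delta_8 \geq 0$. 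The main obstacle is the tension around $\alpha_0$: it must lie below $y\Theta/C$ for $r^{(1)}$ yet above $4(x+z)(2-\delta_R)$ for $r$; because $C$ and $\Theta$ are fixed by the $R=8$ expansion of Theorem~\ref{fixtheta}, the proportions $y, z, \alpha_0$ are all forced to be definite fractions of $\eta$, and the remaining slack must be produced by sending $R \to \infty$ in $\Xi$ and $\Omega$ to drive $\delta_R \to 2$---which is precisely why the sifting set was built out of two different radii.
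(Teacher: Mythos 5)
Your proposal is correct and follows essentially the same route as the paper: you read $\beta$ off the small-divisor main term $\mathcal{M}^{(2)}_\mathfrak{q}$, verify sieve dimension $2$ via the Gaussian-prime Mertens estimate, and make the identical parameter choices $\alpha = 1/16-\eta$, $x=1-\eta$, $z=\eta/(1+C/\Theta)$, $y=zC/\Theta$, $\alpha_0=5z/6$ with $2-\delta_R < z/5$ supplied by Theorem~\ref{deltabig}. The only difference is presentational (the paper spends its formal proof block on the dimension-$2$ product bound, treating the higher-order terms $\sum_{k>1}\beta(\mathfrak{p})^k/k$ explicitly, while you cite the Mertens asymptotic more briefly), and your accounting of $|\Pi|$ via $|\Xi|\,|\aleph|\,|\Omega|$ matches the paper's construction.
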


\begin{proof}
We must show that the sieve dimension is 2.  The following summation formulas for primes in arithmetic progressions are consequences of Mertens work in \cite{merten}:
\begin{align*}
\sum_{\substack{p \leq n\\p \equiv 1 (4)}} \frac{1}{p} &= \frac{1}{2} \log \log n + B_1 + O\left(\frac{1}{\log n}\right), \\
\sum_{\substack{p \leq n\\p \equiv 3 (4)}} \frac{1}{p} &= \frac{1}{2} \log \log n + B_3 + O\left(\frac{1}{\log n}\right).
\end{align*}
For Gaussian primes, we only need the first equality:
$$
\sum_{\GN{\mfp} \leq n} \frac{1}{\GN{\mfp}} = \frac{1}{2} + \sum_{\substack{ p \leq n \\ p \equiv 1 (4)}} \frac{2}{p} + \sum_{\substack{p \leq \sqrt{n} \\ p \equiv 3 (4)}} \frac{1}{p^2} = \log \log n + O(1).
$$

Since $\beta(\mfp) \in (0,1)$ for all $p$, we have
\begin{align*}
\prod_{w \leq \GN{\mfp} \leq z} (1-\beta(\mfp))^{-1} &= \exp\left(-\sum_{w \leq \GN{\mfp} \leq z} \log(1-\beta(\mfp))\right) \\
&=\exp\left( \sum_{w \leq \GN{\mfp} \leq z} \sum_{k=1}^\infty \frac{\beta(\mfp)^k}{k}\right) \\
&= \exp\left( \sum_{w \leq \GN{\mfp} \leq z} \beta(\mfp)\right)\exp\left( \sum_{w \leq \GN{\mfp} \leq z} \sum_{k>1} \frac{\beta(\mfp)^k}{k}\right).
\end{align*}

The second exponential is negligible since
\begin{align*}
    S=\sum_{\mfp} \sum_{k>1} \frac{\beta(\mfp)^k}{k} &\leq \frac{1}{2} \sum_{\mfp} \beta(\mfp)^2 \sum_{k=0}^\infty \beta(\mfp)^k \\
    &< \frac{1}{2} \sum_{\mfp} \frac{\beta(\mfp)^2}{ (1-\beta(\mfp))} \\
    &\ll \sum_{p \equiv 1 (4)}2\frac{b(p)^2}{1-b(p)} + \sum_{p \equiv 3 (4)}\frac{b(p^2)^2}{1-b(p^2)},
\end{align*}
where we have defined
$$
b(x) = \frac{2}{x} \left(1+ \frac{1}{x^2-1}\right).
$$
Since $\sum_n b(n)^2$ converges and $(1-\beta(n))^{-1} \to 1$ partial summation gives $S<\infty$.  Hence
\begin{align*}
\prod_{w \leq \GN{\mfp} \leq z} (1-\beta(\mfp))^{-1} &\leq C \exp\left( \sum_{w \leq \GN{\mfp} \leq z} \beta(\mfp)\right)  \\
&< C \exp \left( \sum_{w \leq \GN{\mfp} \leq z} \frac{2}{\GN{\mfp}} \right) \\
&< C \exp\left(2 \log \left( \frac{\log z}{\log w} \right) \right).
\end{align*}
\end{proof}

From Theorem~\ref{sieve} and the Fundamental Lemma of sieve theory (see Lemma 6.3 of \cite{IK}) we have the following:
\begin{theorem}
\label{brunsieve}
Define 
$$
\Pi_{AP} :=\{\varpi \in \Pi : p | (\mtrs{\pi(\varpi)}-4) \implies \GN{p}>N^{\alpha}\}.
$$
We have
$$
\# \Pi_{AP} > N^{2\delta - \eta}.
$$
\end{theorem}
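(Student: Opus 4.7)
The plan is to invoke the Fundamental Lemma of sieve theory (Lemma~6.3 of \cite{IK}) with the sieve data furnished by Theorem~\ref{sieve}. I take the ambient (unweighted) set to be $\Pi$, the sifting function $\varpi \mapsto \mtrs{\pi(\varpi)}-4$, and the congruence counts $|U_q|$ just computed. These decompose as $|U_q| = \beta(q)|\Pi| + r(q)$ with $\beta$ multiplicative of sieve dimension $\kappa = 2$ (by the product bound proved in Theorem~\ref{sieve}), level of distribution $Q = N^{1/16-\eta}$, and tail remainder of size $\ll_K |\Pi|/(\log N)^K$ summed over squarefree $q$ with $\GN{q}<Q$.

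Next I set the sift level to $z = N^{\alpha}$ and apply the Fundamental Lemma. Provided $s = \log Q/\log z$ is bounded below by an absolute constant (which, for dimension $\kappa = 2$, can always be arranged by shrinking $z$ by a fixed factor in the exponent), it yields
$$
\#\Pi_{AP} \;=\; S(\Pi,z) \;=\; |\Pi|\,V(z)\bigl(1 + O(e^{-s})\bigr) + O\!\left(\sum_{\GN{q}<Q}|r(q)|\right),
$$
where $V(z) := \prod_{\GN{p}<z}(1-\beta(p))$. The density bound in Theorem~\ref{sieve} gives $V(z) \gg (\log z)^{-2} \gg (\log N)^{-2}$, and the remainder is absorbed by the $\ll |\Pi|(\log N)^{-K}$ decay from the same theorem.

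Inserting $|\Pi| \gg N^{2\delta-\eta}$ from Theorem~\ref{sieve}, the main term dominates and
$$
\#\Pi_{AP} \;\gg\; \frac{|\Pi|}{(\log N)^{2}} \;\gg\; \frac{N^{2\delta-\eta}}{(\log N)^{2}} \;\geq\; N^{2\delta - 2\eta}
$$
for $N$ sufficiently large; renaming $2\eta$ as $\eta$ (equivalently, reselecting the parameters $R, Q_0, X, Y, Z$ of Theorem~\ref{sieve} for a slightly smaller initial $\eta$) completes the proof.

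The main obstacle, and the only delicate step, is the parameter calibration between the sift level and the level of distribution: the dimension-$2$ Fundamental Lemma requires $s = \log Q/\log z$ to exceed a fixed absolute constant, so $z$ cannot literally be taken equal to $Q$. I would handle this by shrinking the sift threshold (and thus the $\alpha$ in $\Pi_{AP}$) by an absolute constant factor relative to the $1/16 - \eta$ appearing in Theorem~\ref{sieve}. This constant-factor loss is invisible at the level of the qualitative almost-prime conclusion, which is what feeds the subsequent squarefree-discriminant argument sketched in the introduction.
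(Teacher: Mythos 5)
Your proposal is correct and follows essentially the same route as the paper, which deduces Theorem~\ref{brunsieve} directly from Theorem~\ref{sieve} together with the Fundamental Lemma (Lemma 6.3 of \cite{IK}) and gives no further detail. Your explicit calibration of $s=\log Q/\log z$ --- sifting only up to $z=N^{\alpha}$ with $\alpha$ shrunk by an absolute constant factor below $1/16-\eta$, which is harmless for the later bound $\#\Pi_{AP}^\Box \ll N^{4-\alpha+\epsilon}$ --- is a genuine point that the paper glosses over, and handling it the way you do is the standard and correct fix.
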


\section{Final Estimates}

We have the lower bound
$$
\# \Pi_{AP} =\#\{\varpi \in \Pi : p | (\mtrs{\pi(\varpi)}-4) \implies \GN{p}>N^{\alpha}\} >N^{2\delta - \eta}.
$$
On the other hand,
\begin{align*}
    \#\{\gamma \in \Gamma_R &\cap B_N : (\mtrs{\pi(\gamma)}-4) \text{ is square-free}\} \\
 &\geq \#\{\gamma \in \Pi_{AP} : (\mtrs{\pi(\gamma)}-4) \text{ is square-free}\} \\
    &> N^{2\delta-\eta} - \#\{\gamma \in \Pi_{AP} : (\mtrs{\pi(\gamma)}-4) \text{ is not square-free}\} .
\end{align*}
We will examine the last term 
$$\Pi^\Box_{AP}:=\{\gamma \in \Pi_{AP} : (\mtrs{\pi(\gamma)}-4) \text{ is not square-free}\}$$
more closely.  If $\gamma \in \Pi_{AP}^\Box$, then we can find a prime $p$ with $p^2 | (\mtrs{\pi(\gamma)}-4)$.  On one hand, $\mtrs{\pi(\gamma)}-4$ factors as $\mtr{\pi(\gamma)}\pm 2$ so $\GN{p} \ll N^{1/2}$.  On the other hand, $\Pi_{AP}^\Box \subset \Pi_{AP}$ implies $\GN{p}>N^\alpha$.  Therefore,
\begin{align*}
    \#\Pi_{AP}^\Box &\leq \sum_{N^\alpha < \GN{p} \ll N^{1/2}} \sum_{\substack{\GN{t}<N \\t^2 -4 \equiv 0 (p^2)}} \#\{\gamma \in \Gamma_R \cap B_N : \mtr{\pi(\gamma)}=t\} \\
    &\ll \sum_{N^\alpha < \GN{p} \ll N^{1/2}} \frac{N^2}{\GN{p}^2} N^{2+\epsilon}\ll N^{4-\alpha+\epsilon},
\end{align*}
where we trivially bounded the trace multiplicity, i.e.
$$
\#\{\gamma \in \Gamma_R \cap B_N : \mtr{\pi(\gamma)}=t\}< \#\{s \in B_N : \mtr{s}=t\} \ll N^{2+\epsilon}.
$$
So, as long as $2\delta-\eta > 4-\alpha+\epsilon$, we have:

\begin{theorem}
\label{sqfr}
There is an $R$ such that as $N \to \infty$ we have
$$
\#\{\gamma \in \Gamma_R \cap B_N : \mtrs{\pi(\gamma)}-4 \text{ is square-free}\} >N^{2\delta_R-\eta}.
$$
\end{theorem}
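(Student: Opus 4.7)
The plan is to subtract a controlled error from the almost-prime set $\Pi_{AP}$ produced by Theorem~\ref{brunsieve}. By that theorem, for a suitable choice of $\eta$ and $R$ we already have
$$
\#\Pi_{AP} \gg N^{2\delta_R - \eta},
$$
where every $\varpi \in \Pi_{AP}$ has the property that any Gaussian prime divisor of $\mtrs{\pi(\varpi)}-4$ has norm exceeding $N^{\alpha}$, with $\alpha = 1/16 - \eta$. Since $\Pi_{AP} \subset \Gamma_R \cap B_{CN}$, it is enough to bound the subset
$$
\Pi_{AP}^{\Box} := \{\varpi \in \Pi_{AP} : \mtrs{\pi(\varpi)}-4 \text{ is not square-free}\}
$$
and show $\#\Pi_{AP}^\Box = o(\#\Pi_{AP})$.

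Next I would exploit the key algebraic factorization $\mtrs{\pi(\varpi)} - 4 = (\mtr{\pi(\varpi)}-2)(\mtr{\pi(\varpi)}+2)$. If $\varpi \in \Pi_{AP}^\Box$, then some Gaussian prime $p$ satisfies $p^2 \mid \mtrs{\pi(\varpi)}-4$, and this $p$ must divide one of the two linear factors. Because $\varpi \in B_{CN}$ forces $|\mtr{\pi(\varpi)}| \ll N$, we get the upper bound $\GN{p} \ll N^{1/2}$; meanwhile the almost-prime condition gives the lower bound $\GN{p} > N^\alpha$. Thus $p$ ranges over the dyadic-like window $N^\alpha < \GN{p} \ll N^{1/2}$.

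The counting step goes by fixing $p$ and then the trace: for each $p$ in that window, the number of traces $t$ with $\GN{t} < N$ satisfying $t^2 \equiv 4 \pmod{p^2}$ is $\ll N^2/\GN{p}^2$, and for each such $t$, the number of matrices in $\Gamma_R \cap B_N$ with that prescribed trace is trivially bounded by the number of $s \in B_N$ with $\mtr{s} = t$, which is $\ll N^{2+\epsilon}$ (three free parameters modulo a linear constraint). Summing dyadically over $p$, the sum $\sum_p \GN{p}^{-2}$ is controlled by its smallest term, yielding
$$
\#\Pi_{AP}^\Box \ll N^{4 - \alpha + \epsilon}.
$$
Combining the two estimates gives $\#\{\gamma \in \Gamma_R \cap B_N : \mtrs{\pi(\gamma)}-4 \text{ square-free}\} \geq \#\Pi_{AP} - \#\Pi_{AP}^\Box \gg N^{2\delta_R-\eta}$ provided $2\delta_R - \eta > 4 - \alpha + \epsilon$.

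The only real obstacle is ensuring this last inequality is satisfiable. With $\alpha = 1/16 - \eta$, we need $2\delta_R > 4 - 1/16 + 2\eta + \epsilon$, i.e. $\delta_R$ must exceed $2 - 1/32$ by at least a constant depending on $\eta$. This is exactly where Theorem~\ref{deltabig} is crucial: choose $R$ sufficiently large so that $\delta_R$ is close enough to $2$ for the required margin. Since the parameters $c, C, \Theta$ governing the sieve from Theorem~\ref{fixtheta} are fixed (they come from $R = 8$), sending the sieving $R$ to infinity in the construction of $\Pi$ does not disturb the level of distribution $\alpha$, so the two bounds are compatible and $R$ can be chosen to close the argument.
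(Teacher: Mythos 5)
Your proposal is correct and follows essentially the same route as the paper: subtract from $\Pi_{AP}$ (Theorem~\ref{brunsieve}) the non-square-free subset $\Pi_{AP}^\Box$, use the factorization $\mtrs{\cdot}-4=(\mtr{\cdot}-2)(\mtr{\cdot}+2)$ to force $N^\alpha<\GN{p}\ll N^{1/2}$, bound $\#\Pi_{AP}^\Box\ll N^{4-\alpha+\epsilon}$ by summing the trivial trace-multiplicity bound over admissible traces, and close with $2\delta_R-\eta>4-\alpha+\epsilon$ via Theorem~\ref{deltabig} and the decoupling of $\delta_R$ from the fixed $R=8$ sieve constants. The only wrinkle is a normalization slip in deducing $\GN{p}\ll N^{1/2}$ from $|\mtr{\pi(\varpi)}|\ll N$ (this literally gives only $\GN{p}\ll N$), but the same ambiguity appears in the paper and the resulting bound $N^{4-\alpha+\epsilon}$ is unaffected.
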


 We now select discriminants which contribute the most to the count above.  Define
$$
\mathcal{T}:=\{t \in \Zi : t^2-4 \text{ is square-free}\}
$$
and 
$$
\mathcal{M}_R(t) :=\#\{ \gamma \in \Gamma_R : \mtr{\pi(\gamma)} = t\}.
$$

\begin{proposition}
For any $\eta$ there is a large enough $R$ so that
$$
\sum_{\substack{t \in \mathcal{T} \\ \GN{t} \leq N}} {\bf 1}_{\{\mathcal{M}_R(t) \geq \GN{t}^{2\delta_R -2 - 2\eta}\}} > N^{2\delta_R-2-\eta}.
$$
\end{proposition}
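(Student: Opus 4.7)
The plan is to apply Theorem~\ref{sqfr} and distribute its conclusion over trace fibers, using a pigeonhole/dichotomy argument to extract the per-trace density statement from the global count of matrices with square-free discriminant.

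Concretely, I would first apply Theorem~\ref{sqfr} with a matrix-norm parameter $M \asymp \sqrt{N}$, chosen so that every $\gamma \in \Gamma_R \cap B_M$ has $\GN{\mtr{\pi(\gamma)}} \ll N$. For $R$ large enough (by Theorem~\ref{deltabig}, we may take $\delta_R$ as close to $2$ as desired, absorbing a small slack into $\eta$), this produces a set
\[
S := \{\gamma \in \Gamma_R \cap B_M : \mtrs{\pi(\gamma)} - 4 \text{ square-free}\}
\]
with $|S| \gg M^{2\delta_R - \eta/2}$. Partitioning $|S|$ by trace gives
\[
|S| = \sum_{\substack{t \in \mathcal{T}\\ \GN{t} \leq N}} \#\{\gamma \in S : \mtr{\pi(\gamma)} = t\}.
\]

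Letting $\mathcal{A}$ denote the set indicated on the left-hand side of the proposition, I would then split the double-count into $t \in \mathcal{A}$ and $t \notin \mathcal{A}$. For $t \notin \mathcal{A}$, the fiber size is bounded by the threshold $\mathcal{M}_R(t) < \GN{t}^{2\delta_R - 2 - 2\eta}$, and integrating this bound against the density of Gaussian integers gives a total contribution of order $N^{2\delta_R - 1 - 2\eta}$. For $t \in \mathcal{A}$, the fiber size is at most the trivial trace count $\#\{\gamma \in B_M : \mtr{\gamma} = t\} \ll M^{2+\epsilon}$ already deployed at the end of the proof of Theorem~\ref{sqfr}. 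Combining gives $|\mathcal{A}| \cdot M^{2+\epsilon} + N^{2\delta_R - 1 - 2\eta} \gg M^{2\delta_R - \eta/2}$, and solving for $|\mathcal{A}|$ yields the desired lower bound.

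The main obstacle is parameter balancing: in a naive execution the exceptional-trace contribution is of the same order as or exceeds the main term. To close the gap, a dyadic decomposition of $\GN{t}$ into shells $\GN{t} \asymp T$ should be deployed. Within each shell, matrices whose trace lies in the shell concentrate in $B_{C\sqrt{T}}$, so Theorem~\ref{sqfr} may be applied at the finer scale $M = C\sqrt{T}$ and Theorem~\ref{normball} provides the correct upper bound on the total count there. Pigeonholing over the dyadic parameter $T$ isolates the dominant scale, and the freedom to take $\delta_R$ arbitrarily close to $2$ compensates for the remaining loss to produce $|\mathcal{A}| \gg N^{2\delta_R - 2 - \eta}$.
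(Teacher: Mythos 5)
Your overall strategy---distribute the square-free count of Theorem~\ref{sqfr} over trace fibers and run a Chebyshev/dichotomy argument with the threshold $W=\GN{t}^{2\delta_R-2-2\eta}$, bounding heavy fibers by the trivial trace multiplicity---is exactly the paper's argument. The gap is in your choice of scale. By taking $M\asymp\sqrt{N}$ you only feed in $|S|\ll\#(\Gamma_R\cap B_M)\asymp M^{2\delta_R}\approx N^{\delta_R}$ matrices, while producing $N^{2\delta_R-2-\eta}$ traces each of multiplicity at least $N^{2\delta_R-2-2\eta}$ would require on the order of $N^{4\delta_R-4-3\eta}\approx N^{4}$ matrices (for $\delta_R$ near $2$). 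So the deficit is a factor of roughly $N^{\delta_R}$, and no bookkeeping can recover it: your own accounting shows the ``error'' term $N^{2\delta_R-1-2\eta}$ dwarfs the ``main'' term $N^{\delta_R-\eta/4}$. The dyadic fix does not repair this, because the same deficit recurs shell by shell: at scale $\GN{t}\asymp T$ you have at most $\approx T^{\delta_R}$ matrices available, so at most $\approx T^{\delta_R-(2\delta_R-2-2\eta)}=T^{2-\delta_R+2\eta}\approx T^{2\eta}$ traces per shell can clear the threshold, and summing over $O(\log N)$ shells gives nothing close to $N^{2\delta_R-2-\eta}$. Indeed, under your literal reading of the norm the target is unreachable for a more basic reason: there are only $O(N)$ Gaussian integers with $\GN{t}\le N$, which is smaller than $N^{2\delta_R-2-\eta}$ once $\delta_R>3/2$.

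The resolution is that the trace-size parameter in the proposition must be taken at the same scale as the ball in Theorem~\ref{sqfr} (the paper is admittedly loose with $\GN{\cdot}$ versus $|\cdot|$ here, but its proof and the subsequent application with $\GN{d}\approx\GN{t}^2$ make the intended normalization clear): apply Theorem~\ref{sqfr} at scale $N$ itself, so the relevant traces range over a set of size $\asymp N^{2}$ and the count of square-free-discriminant matrices is $N^{2\delta_R-\eta}$. Then a single global split at $W=N^{2\delta_R-2-2\eta}$, using the trivial fiber bound $\ll N^{2+\epsilon}$ for heavy traces and $\ll N^{2}W=N^{2\delta_R-2\eta}$ for the light ones (which is dominated by $N^{2\delta_R-\eta}$), immediately gives the stated lower bound; no dyadic decomposition or further appeal to Theorem~\ref{normball} is needed.
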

\begin{proof}
The previous theorem gives 
\begin{align*}
N^{2\delta_R-\eta}&<\sum_{\substack{t \in \mathcal{T} \\ \GN{t} \leq N}} \mathcal{M}_{R,N}(t) \\ &=\sum_{\substack{t \in \mathcal{T} \\ \GN{t} \leq N}} \mathcal{M}_{R,N}(t) \left( {\bf 1}_{\{\mathcal{M}_{R,N}(t) \geq W\}}+{\bf 1}_{\{\mathcal{M}_{R,N}(t) < W\}} \right)
\end{align*}
where $W$ is a parameter of our choice.  Trivially, we have $\mathcal{M}_{R,N}(t) \ll N^{1+\epsilon}$.  So,
$$
N^{2\delta_R-\eta}\ll N^{2+\epsilon} \sum_{\substack{t \in \mathcal{T} \\ \GN{t} \leq N}} {\bf 1}_{\{\mathcal{M}_{R,N}(t) \geq W\}} + N^2W.
$$
Now, set $W = N^{2\delta_R-2-2\eta}$ and the claim follows.
\end{proof}

For any $\epsilon>0$, we can find $\eta$ small and $R$ large so that
$$
2\delta_R - 2 - \eta > 2 - \epsilon.
$$
The choice of $R$ gives our compact region (geodesics do not visit the cusp when their symbolic encodings have small entries.)  We define
$$
\mathcal{D} := \{D = t^2-4 : t \in \mathcal{T}, \mathcal{M}_R(t) > \GN{t}^{2\delta_R-2-2\eta} \}
$$
which gives a subset of all fundamental discriminants. The previous claim gives us a lower bound for the number of these discriminants:
\begin{align*}
\#\{d \in \mathcal{D} : \GN{d} \leq T\} &\geq \#\{ t \in \mathcal{T}: \GN{t} \leq T^{1/2}, \mathcal{M}_R(t) >\GN{t}^{2\delta_R-2-2\eta}\} \\
&> (T^{1/2})^{2\delta_R-2-\eta} \\
&>T^{1-\epsilon}.
\end{align*}
Moreover, for each discriminant $d \in \mathcal{D}$,if $d=t^2-4$ then
$$
\mathcal{M}_R(t) > \GN{t}^{2-\epsilon}> \sqrt{\GN{D}}^{2-\epsilon} \gg |C_D|^{1-\epsilon'}
$$

Thus, after renaming constants we have proved Theorem~\ref{main}, which we restate here:
\begin{theorem}
For any $\epsilon>0$, there is a compact region $Y(\epsilon) \subset \Gamma \backslash \mathbb{H}^3$ and a set $D(\epsilon)$ of fundamental discriminants such that
\begin{align*}
\#\{ D \in \mathcal{D}(\epsilon): \GN{D}<X \} &\gg_\epsilon X^{1-\epsilon}, \hspace{.3in} X \to \infty
\end{align*}
and for all $D \in \mathcal{D}(\epsilon)$, 
\begin{align*}
\#\{ \gamma \in C_D : \gamma \subset Y(\epsilon)\} &> |C_D|^{1-\epsilon}.
\end{align*}
\end{theorem}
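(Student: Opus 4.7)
The plan is to deduce Theorem~\ref{main} from Theorem~\ref{sqfr} by a dyadic pigeonhole argument on the trace. First I would fix a small $\eta > 0$ in terms of the target $\epsilon$ and, using Theorem~\ref{deltabig}, pick $R = R(\eta)$ large enough that $\delta_R > 2 - \eta$; the compact region $Y(\epsilon) \subset \Gamma \backslash \mathbb{H}^3$ is then taken to be the image of the orbits whose symbolic codes use only alphabet letters from $\mathcal{P}_R$, which by construction stays inside a fixed ball in the manifold.

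The heart of the argument is to bin the squarefree-trace count from Theorem~\ref{sqfr} by trace value. Setting $\mathcal{M}_{R,N}(t) := \#\{\gamma \in \Gamma_R \cap B_N : \mtr{\pi(\gamma)} = t\}$ and $\mathcal{T} := \{t \in \Zi : t^2 - 4 \text{ is squarefree}\}$, Theorem~\ref{sqfr} gives
\[
\sum_{\substack{t \in \mathcal{T} \\ \GN{t} \ll N}} \mathcal{M}_{R,N}(t) \gg N^{2\delta_R - \eta}.
\]
A trivial count of $\gamma = \begin{pmatrix}a&b\\c&d\end{pmatrix}$ with $\|\gamma\| < N$ and $a+d = t$ yields $\mathcal{M}_{R,N}(t) \ll N^{2+o(1)}$, and there are only $\ll N^2$ admissible traces $t$. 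Setting the threshold $W := N^{2\delta_R - 2 - 2\eta}$, the traces with $\mathcal{M}_{R,N}(t) < W$ contribute at most $N^{2\delta_R - 2\eta}$, which is dominated by the full sum. Hence
\[
\#\{t \in \mathcal{T} : \GN{t} \ll N, \, \mathcal{M}_{R,N}(t) \geq W\} \gg N^{2\delta_R - 2 - \eta}.
\]

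With this in hand, I would define
\[
\mathcal{D}(\epsilon) := \{t^2 - 4 : t \in \mathcal{T}, \, \mathcal{M}_R(t) > \GN{t}^{2\delta_R - 2 - 2\eta}\},
\]
so every $D \in \mathcal{D}(\epsilon)$ is automatically fundamental. Taking $N = X^{1/2}$ in the pigeonhole bound gives $\gg X^{\delta_R - 1 - \eta/2} \gg X^{1-\epsilon}$ such discriminants of norm at most $X$, provided $\eta$ is chosen small enough in terms of $\epsilon$. For the second inequality, $\GN{t} = \sqrt{\GN{D}}$ and the standard bound $|C_D| \ll \GN{D}^{1/2 + o(1)} = \GN{t}^{1+o(1)}$ on the number of Dirichlet forms of given discriminant shows $|C_D|^{1-\epsilon} \ll \GN{t}^{1-\epsilon+o(1)}$, while $\mathcal{M}_R(t) > \GN{t}^{2\delta_R - 2 - 2\eta}$ is essentially $\GN{t}^{2 - O(\eta)}$. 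For $\eta$ small compared to $\epsilon$, this exceeds $|C_D|^{1-\epsilon}$ with ample room to absorb the $o(1)$ losses.

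The main obstacle is not analytic: everything substantive has been encapsulated in Theorem~\ref{sqfr}. Rather it is parameter juggling — one must check that a single choice of $R$ and $\eta$ is simultaneously compatible with Theorem~\ref{deltabig} (forcing $\delta_R$ close to $2$), the sieve level of distribution $\alpha < 1/16$ from Theorem~\ref{sieve}, and the exponent comparisons above. Once these parameters are arranged, the deduction of Theorem~\ref{main} is mechanical.
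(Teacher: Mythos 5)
Your proposal is correct and follows essentially the same route as the paper: the paper likewise deduces Theorem~\ref{main} from Theorem~\ref{sqfr} by pigeonholing the squarefree-trace count over trace values with threshold $W = N^{2\delta_R-2-2\eta}$ (using the trivial multiplicity bound $\ll N^{2+\epsilon}$), defines $\mathcal{D}$ as the discriminants $t^2-4$ of the heavy traces, and concludes via $\mathcal{M}_R(t) > \GN{t}^{2-\epsilon} \gg |C_D|^{1-\epsilon'}$, with the compact region coming from the choice of $R$ and with $\delta_R$ pushed near $2$ by Theorem~\ref{deltabig}. The parameter bookkeeping you flag is exactly how the paper arranges it, so nothing further is needed.
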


\bibliographystyle{alpha}
\bibliography{total_copy}

\end{document}